\documentclass[twocolumn]{amsart}

\usepackage[letterpaper,margin=0.7in,columnsep=18pt]{geometry}
\usepackage[colorlinks=true,breaklinks=true]{hyperref}
\usepackage{graphicx}
\usepackage{amssymb,amsfonts}
\usepackage{mathrsfs}
\usepackage{subfig}
\usepackage{algorithm}
\usepackage{algorithmic}

\newtheorem{theorem}{Theorem}
\newtheorem{definition}{Definition}

\newtheorem{lemma}{Lemma}
\newtheorem*{problem}{Problem}
\newtheorem{remark}{Remark}
\newtheorem{hypothesis}{Hypothesis}

\newtheorem{proposition}{Proposition}

\graphicspath{{./}{./figs/}}
\renewcommand{\vec}[1]{\boldsymbol{#1}}
\newcommand*{\defeq}{\mathrel{\vcenter{\baselineskip0.5ex \lineskiplimit0pt
			\hbox{\scriptsize.}\hbox{\scriptsize.}}}%
	=}

\newcommand{\reals}{\mathbb{R}}

\providecommand{\va}{\ensuremath{\vec{a}}}
\providecommand{\vb}{\ensuremath{\vec{b}}}

\providecommand{\ve}{\ensuremath{\vec{e}}}

\providecommand{\vg}{\ensuremath{\vec{g}}}

\providecommand{\vq}{\ensuremath{\vec{q}}}
\providecommand{\vr}{\ensuremath{\vec{r}}}

\providecommand{\vu}{\ensuremath{\vec{u}}}
\providecommand{\vv}{\ensuremath{\vec{v}}}
\providecommand{\vw}{\ensuremath{\vec{w}}}

\title{A Riemannian View on Active Subspaces}
\author{Zachary J. Grey}
\address{National Institute of Standards and Technology, Information Technology Laboratory, Applied and Computational Mathematics Division, Boulder, CO, USA}
\email{zachary.grey@nist.gov}
\keywords{active manifold-geodesics, Riemannian geometry, active subspaces, shape analysis, preshape spaces, parallel transport}
\subjclass[2020]{62R30, 53B21, 62H25, 65F15}

\begin{document}
\sloppy

\begin{abstract}
Active subspaces provide an explainable, eigenvalue-ordered principle for studying how scalar-valued quantities of interest change the most, on average, over a reduced basis of Euclidean domains. Composition with parallel transport generalizes this principle from Euclidean space to quantities of interest defined over Riemannian manifolds, and the resulting intrinsic formulation is contrasted with the extrinsic, embedding-based gradient average of manifold learning. Either strategy is studied in an intrinsically local sense, restricted to mean-centered geodesic-balls, and within that scope the two are not identical: on the central tangent space, eigenvalues agree to second order in the geodesic radius of the sampled domain, while dominant eigenspaces agree at the same order relative to the spectral gap. Extending activity beyond that central space then calls for either recomputed decompositions over changing tangent spaces or, intrinsically, parallel transport of a single central frame. Hyperspheres are emphasized throughout as a particular manifold of interest, motivated by applications over preshape spaces for statistical shape analysis. Numerical examples over the $2$-sphere illustrate the formalism, including the derived ridge recovery at a curvature-limited quadratic rate.
\end{abstract}

\maketitle

\section{Introduction}
Ridge approximations reduce a high-dimensional computational model to a function of a few linear combinations of its parameters, with powerful implications for the approximation, integration, optimization, and sensitivity analysis of a scalar quantity of interest. Active subspaces give an interpretable, eigenvalue-ordered approach to subspace-based dimension reduction. Dimension reduction need not be linear, however, and we develop the geometric abstractions that extend active subspaces to smooth functions on smooth manifolds. We pose the parameter-reduction manifold hypothesis (section~\ref{sec:hypothesis}), review active subspaces (section~\ref{sec:AS_intro}), extend them to Riemannian manifolds (section~\ref{sec:intro-Riemannian-geo}), and close with explainable examples over the $2$-sphere (section~\ref{sec:eg_2sphere}). The spherical setting is deliberate: after centering and size or length normalization, the \textit{preshape} spaces of statistical shape analysis are hyperspheres \cite{kendall1984,mio2007shape}, so the sphere results apply directly to functions of preshape discretizations; richer matrix manifolds and physics-based applications are outlined as outlook in the conclusions. Proofs of all numbered results are collected in Appendix~\ref{app:proofs}.

\subsection{Parameter-reduction manifold hypothesis}
\label{sec:hypothesis}

A growing interest~\cite{hokanson2017,mukherjee2010learning,constantine2014active,lewis2016gradient,loudon2016,Cook1998,glawsMHD} suggests that problems in a large number of parameters $x \in \mathcal{X} \subseteq \mathbb{R}^n$---the integration \cite{Glaws2018}, approximation \cite{hokanson2017,Constantine11c}, or optimization \cite{Alonso2017,Grey2017,Constantine2015exploiting} of a scalar \textit{quantity of interest} $f:\mathcal{X} \subseteq \mathbb{R}^n \rightarrow \mathbb{R}$---become tractable by restricting the parameters to a low-dimensional manifold $\mathcal{M} \subseteq \mathbb{R}^n$. This motivates the following hypothesis.

\begin{hypothesis} \label{hypothesis}
	Given a function $f:\mathcal{X} \subseteq \mathbb{R}^n \rightarrow \mathbb{R}$ as a map from high-dimensional input parameters to a scalar-valued quantity of interest, we assume there exists a \textit{smooth submersion} $\varphi^{-1}:\mathcal{X} \rightarrow \mathbb{R}^r$,  $r < n$, a function $h: \mathbb{R}^r \rightarrow \mathbb{R}$, and small $\epsilon \geq 0$ such that 
	$$
	\Vert f(x) - h\left(\varphi^{-1}(x)\right) \Vert_{L^2_{\rho}(\mathcal{X})} \leq \epsilon.
	$$
\end{hypothesis}
The assumptions on $\varphi^{-1}$ above are sufficient to establish the existence of a smooth $r$-dimensional submanifold of $\mathcal{X}$ by the Regular Level Set Theorem for smooth submersions \cite{Lee2003}. Thus, the take-away of Hypothesis \ref{hypothesis} is that $f$ can be approximated up to a stated $\epsilon$ by a function restricted to this $r$-dimensional submanifold. Suppose $\mathcal{M}$ represents an embedded submanifold as an $r$-dimensional subspace of $\mathbb{R}^n$. In other words, $\varphi(y) \defeq Uy$ where $U \in \mathbb{R}^{n \times r}$ is a full-rank matrix whose columns constitute a basis of a subspace in $\mathbb{R}^n$. As a result, a transverse $(n-r)$-embedded submanifold $\mathcal{M}^{\perp}$ is the level set $\text{Null}(U^{\top}) \defeq \lbrace x \in \mathbb{R}^n \,:\, U^{\top}x = 0\rbrace$. By Hypothesis \ref{hypothesis}, $f$ varies negligibly over the $(n-r)$ directions of $\mathcal{M}^{\perp}$, so we fix them and focus on the remaining $r$ coordinates. Taking the columns of $U$ orthonormal, $U^{\top}U = I_r$, the approximation reduces to determining $U$ with $\varphi^{-1} \defeq U^{\top}$ and
\begin{equation} \label{eq:sub_dim_reduction}
\Vert f(x) - h\left(U^{\top}x\right) \Vert_{L^2_{\rho}(\mathcal{X})} \leq \epsilon.
\end{equation}
With orthonormal bases, $\mathcal{M}$ and $\mathcal{M}^{\perp}$ are genuine orthogonal complements---motivating the notation---and the composition $\varphi \circ \varphi^{-1}: \mathcal{X} \subseteq \mathbb{R}^n \rightarrow  \mathcal{X} \subseteq \mathbb{R}^n$ is the linear projection $UU^{\top}$.

\subsection{Nonlinear abstraction}
Per the abstractions of \cite{Lee2003}, we have motivated generalizing the surjective linear map $U^{\top}$ to a smooth submersion $\varphi^{-1}=(\varphi^{-1}_1,\dots,\varphi^{-1}_r)$ whose $r$ component functions supply reduced coordinates on $\mathbb{R}^n$. In the context of Hypothesis~\ref{hypothesis}, we seek these $r$ coordinate maps---defining a manifold $\mathcal{M}$ that captures the variation in $f$---and we want $r$ small, so that the \textit{important} directions can be explored with computational techniques that otherwise scale poorly in the domain dimension.

Other applications seek an approximation of $\varphi^{-1}$ which maps inherently high-dimensional embedded points in the domain $x \in \mathcal{X} \subseteq \mathbb{R}^n$ to low-dimensional \textit{coordinates} in the image of $\varphi^{-1}$. This more general nonlinear interpretation for determining the map $\varphi^{-1}$, particularly in the context of machine learning and artificial intelligence, may involve fitting a homeomorphism to data $\left\lbrace x_i \right\rbrace_{i=1}^N$ given vectors $x_i \in \mathcal{X}$, e.g.,

\begin{equation} \label{eq:homeo_min}
	\underset{\theta \in \mathcal{D}}{\text{minimize}}\,\, \sum_{i=1}^N \left(x_i - (\varphi \circ \varphi^{-1})(x_i; \theta)\right)^2,
\end{equation}
for parameters $\theta \in \mathcal{D} \subseteq \mathbb{R}^{d_\theta}$ of the composed submersion and immersion---for instance an autoencoder \cite{alain2014regularized}. Lacking a function response, \eqref{eq:homeo_min} is \textit{unsupervised}, akin to principal component analysis \cite{Jolliffe2002}, and is complicated by the non-uniqueness of the chart\footnote{Notice the notation convention for the homeomorphism $\varphi$ is opposite of \cite{Lee2003} and traditional definitions of charts since several computational applications are primarily interested in the resulting $\varphi$ as a parametrization of a manifold \cite{Walker2015}. This convention also corresponds nicely with a choice of normal coordinates to be developed in following sections.} $(\mathcal{X}, \varphi^{-1})$---evident in the transition maps between overlapping charts \cite{Lee2003}---so it is regularized through Jacobian (tangent-space) or Hessian (curvature) constraints \cite{alain2014regularized}. Moreover, the \textit{supervised} counterpart which simultaneously guides a nonlinear dimension reduction using a function's response is generally ill-posed without such constraints.

Either submanifold $\mathcal{M}$ or $\mathcal{M}^{\perp}$ may be the informative one \cite{lee2018model,mukherjee2010learning,bridges2019active}: $\varphi^{-1}$ maps to a latent \textit{coordinate representation} (a \textit{chart}) and $\varphi$ \textit{parametrizes} the $r$-submanifold $\mathcal{M} \subseteq \mathbb{R}^n$. Of course, all of this is contingent on the data being manifold-valued (the \textit{manifold hypothesis}) and that the mapping is provably a homeomorphism---assumptions which are often challenging to assert for overly complicated data and model forms.

\subsection{Related work}
\label{sec:lit_review}
Active subspaces \cite{constantine2014active,Constantine2015} provide the linear template we generalize: the dominant eigenspaces of the average gradient outer product \eqref{eq:C} order directions by mean-squared directional derivative \eqref{eq:eigvals} and certify a ridge approximation \eqref{eq:err_apprx}. The construction has since been extended in many directions---near-stationary ridge recovery \cite{constantine2017near,hokanson2018data}, Lipschitz-matrix and inverse-regression variants \cite{hokanson2019lipschitz,glaws2017inverse}, vector-valued targets \cite{zahm2018}, and kernel methods \cite{romor2023kernel}---yet each remains a \textit{linear} subspace over a Euclidean domain; see \cite{serani2025survey} for a recent survey of the design-space reduction methods used in shape optimization. This work instead retains the ordered, eigenvalue-based interpretation over a domain that is itself a Riemannian manifold.

Fully nonlinear alternatives abandon the subspace altogether: active manifolds \cite{bridges2019active} and their neural counterparts \cite{zanoni2025neural} approximate the response along a single one-dimensional curve. Such constructions are expressive but fragile---representing genuinely multi-dimensional variation with a one-dimensional manifold may force the curve to become \textit{space-filling} (absent regularization), forfeiting the interpretability that linear reductions motivate in the first place. We instead take the domain manifold as given---e.g.\ a matrix or shape manifold \cite{Absil2008,Schulz2014}---and seek an \textit{ordered set} of geodesic directions emanating from a central tangent space, recovering low-dimensional structure without a space-filling parametrization.

A separate lineage is also preoccupied with fitting a curve rather than a subspace, beginning with principal curves \cite{hastie1989principal} and continuing onto manifolds with principal flows \cite{panaretos2014principal}, whose curve follows at each point the leading eigenvector of a kernel-localized tangent covariance of the data. That construction is unsupervised---its second moment averages data \textit{positions}---and it is the natural counterpart of the transported frame field of section~\ref{sec:AMG_field}, which instead extends a single response-driven decomposition; localizing the present construction and integrating the resulting field is the subject of forthcoming work. A related family estimates \textit{density} ridges, the set on which a density is locally maximal in a subspace of Hessian eigenvectors, by subspace-constrained mean shift \cite{ozertem2011locally}, with consistency theory in \cite{genovese2014nonparametric}. There is a conflation of vocabulary which is worth caution: \textit{ridge} throughout this article carries the ridge-function sense of \eqref{eq:ridge}---a response varying through only a few linear combinations of its inputs---rather than an intuitive ridge along a density.

Alternatively, a sequential and potentially less degenerate posture first learns the nonlinear domain---a space of equivalence classes, quotienting out actions deemed unimportant---and only then identifies activity over it. Solving for the domain and the reduction \textit{simultaneously} may be tractable but any implicit regularization of a well-posed nonlinear dimension reduction by a better, smoother data domain transfers immediately to the methods discussed in this work. The two postures differ in how the domain is constrained: \textit{explicitly}, by first asking which equivalences best represent the domain and then seeking submanifolds of interest; or \textit{implicitly}, by penalizing the joint optimization so that a class of $\varphi$ carves out well-regularized constraint surfaces. The implicit route is often the easier---penalization and model forms can be chosen empirically---though it lacks a more principled interpretation of the invariances which are learned/represented.

Whether implicitly or explicitly, determining a subset of important directions is the province of geometric statistics: principal geodesic analysis \cite{fletcher2004principal}, its geodesic-PCA refinement for manifolds modulo isometric group actions \cite{huckemann2010intrinsic}, barycentric subspace analysis \cite{pennec2018barycentric}. Two perspectives recur, both posed over the \textit{same} given Riemannian manifold: an \textit{intrinsic} construction assembled from quantities defined on the manifold itself, and an \textit{extrinsic} one read off an isometric embedding in an ambient Euclidean space. We make this dichotomy precise for scalar-valued activity. Parallel-transporting each Riemannian gradient along the unique geodesic to a central tangent space before averaging \eqref{eq:riemannian_view} yields a single coordinate-independent $(0,2)$-tensor whose eigenvalues preserve the mean-squared-directional-derivative reading \eqref{eq:eigvals}. An embedding-based average \eqref{eq:emb_opg} instead depends on the chosen embedding and carries a spectrum conflated by projection (Thm.~\ref{thm:normal_eigvals})---an agreement quantified to second order in the radius of the sampled geodesic balls (Prop.~\ref{prop:local_agreement}). We therefore often favor intrinsic constructions but this is predicated on explicit knowledge of the domain---presumably within reach even for those which are learned~\eqref{eq:homeo_min}.

Statistical dimension reduction has also been carried to non-Euclidean data, along two lines that are best separated by what each assumes about the geometry. The first places the \textit{predictors} on a manifold that is unknown and must be estimated from the sample. Supervised manifold learning of this kind combines kernel dimension reduction with a Laplacian-eigenmap representation of the covariates, and is presented explicitly as carrying sufficient dimension reduction from linear subspaces onto manifolds \cite{nilsson2007regression}. A closely related family instead learns the gradient of the regression function, forming a gradient outer product in ambient coordinates whose convergence rates are governed by the intrinsic dimension of the manifold supporting the data rather than by the ambient dimension \cite{mukherjee2010learning,wu2010learning}. Because the domain is inferred rather than given in both cases, its geometry enters as a regularizer or as a rate, and the reduction is still read off ambient or feature coordinates. 

The second line takes the geometry as given but attaches it to the \textit{response}: Fr\'echet sufficient dimension reduction treats responses valued in a metric space with Euclidean predictors \cite{ying2020frechet}, and intrinsic minimum-average-variance and outer-product-of-gradients estimators have been developed for responses valued in the manifold of symmetric positive-definite matrices \cite{chen2023intrinsic}, where \textit{intrinsic} refers to exploiting the geometry of the response manifold through geodesic distance and parallel transport inside a local Taylor expansion. The problem posed here draws from both: the geometry is given, as in the second line, but it is carried by the \textit{domain}, as in the first. The response is scalar and the domain is a Riemannian manifold, so the gradients being averaged are Riemannian gradients of $f$ and the transport acts on the tangent bundle of the \textit{domain}. Throughout this article, accordingly, \textit{intrinsic} qualifies the geometry of the domain. 

Parallel transport has recently been used to build an unsupervised second-order descriptor of manifold-valued observations, by transporting local variations to a common tangent space \cite{soto2026intrinsic}. Here, by contrast, it is the response gradient that is transported, which is what preserves the reading of important directions as mean-squared directional derivatives \eqref{eq:eigvals}.

Hyperspheres receive particular emphasis because they are not merely a convenient test geometry: they are the \textit{preshape spaces} of statistical shape analysis. Kendall's landmark preshapes---configurations of $k$ planar landmarks after centering and size normalization---constitute the hypersphere $S^{2k-3}$ \cite{kendall1984}, and elastic representations of \textit{open} planar curves, after normalizing length, likewise realize preshapes as elements of a hypersphere \cite{mio2007shape,klassen2004analysis,Joshi2007}. Our results are stated on finite-dimensional $S^n$, so they apply directly to landmark preshapes and to discretized elastic representations (direction functions or square-root velocities sampled at finitely many points), where the preshape sphere is finite-dimensional. The developed closed-form projection--transport identity (Lemma~\ref{lem:sphere_proj_trans}) and the second-order agreement of Prop.~\ref{prop:local_agreement} then govern response-driven dimension reduction for functions of shape along preshape geodesics.

\subsection{Contributions}
\label{sec:contributions}
We make the following contributions:
\begin{itemize}
\item an \textit{intrinsic} generalization of active subspaces to a scalar response on a Riemannian manifold---the parallel-transport gradient outer product \eqref{eq:riemannian_view}, a coordinate-free tensor whose eigenvalues carry an \textit{exact} mean-squared directional derivative interpretation along a transported frame field (Lemma~\ref{lem:exact_eigvals});
\item the \textit{active manifold-geodesics} (Def.~\ref{def:AMG})---an eigenvalue-ordered set of geodesic directions generalizing the ordered active subspace, identifiable and low-complexity, avoiding the space-filling degeneracy of unconstrained nonlinear reductions (Remark~\ref{rem:dichotomy});
\item a precise \textit{intrinsic-versus-extrinsic} comparison---the embedding-based average \eqref{eq:emb_opg} carries a spectrum conflated by a partial-isometry projection (Thm.~\ref{thm:normal_eigvals}), yet agrees with the intrinsic construction to second order in the sampled radius: eigenvalues at $\mathcal{O}(R^2)$ and dominant eigenspaces at $\mathcal{O}(R^2/\eta)$ under a spectral gap $\eta$ (Prop.~\ref{prop:local_agreement}), in closed form on hyperspheres (Lemma~\ref{lem:sphere_proj_trans}). That agreement does not carry the ambient eigenvectors with it, since projected dominance is not dominant projection (Remark~\ref{rem:noncommute}). This interpretation motivates an important convention: \textit{activity should never be extended by projecting ambient dominance}. Instead, intrinsic frames extend a single central decomposition by parallel transport;
\item direct application to \textit{preshape spaces}---landmark and length-normalized elastic-curve preshapes are hyperspheres, so the theory governs response-driven dimension reduction for functions of shape, illustrated over the $2$-sphere.
\end{itemize}

\subsection{Active subspaces}
\label{sec:AS_intro}

We review active subspaces \cite{Constantine2015} in the form our Riemannian extension generalizes. Let the computational model be a differentiable function $f: \mathcal{X}  \subset \mathbb{R}^n \rightarrow \mathbb{R}$ on a compact $\mathcal{X}$, with parameters $x = (x^1,\dots,x^n)^{\top}$ and square-integrable gradient
\begin{equation} \label{eq:grad}
\overline{\nabla}f\defeq\left(\frac{\partial f}{\partial x^1},\dots,\frac{\partial f}{\partial x^n}\right)^{\top} \in \mathbb{R}^n.
\end{equation}
The compact $\mathcal{X}$ is set by the application---often a hyper-rectangle $[x^{\ell},x^u]^n$ of reasonable parameter intervals, chosen to keep a physics-based model well defined \cite{white2019,grey2019,Constantine2015exploiting,glawsMHD} or to make reasonable perturbations to a parametrized geometry \cite{Lukaczyk2014,Grey2017,SEQUOIA2017}.

Integrating over this compact domain, we define a positive semi-definite matrix and associated (real nonnegative) eigendecomposition,
\begin{equation} \label{eq:C}
C \defeq \int_{\mathcal{X}}\overline{\nabla} f(x) \otimes \overline{\nabla} f(x) \, \rho(x) dx = W\Lambda W^{\top},
\end{equation}
where $W = \left[\boldsymbol{w}_1\, \dots \, \boldsymbol{w}_n\right] \in \mathbb{R}^{n\times n}$ is orthogonal and $\Lambda = \text{diag}(\lambda_1,\dots,\lambda_n) \in \mathbb{R}^{n \times n}$ with eigenvalues $\lambda_i \geq 0$ for all $i=1,\dots,n$. We have defined this integration using an arbitrary measure represented by $\rho:\mathcal{X} \subseteq \mathbb{R}^n \rightarrow
\mathbb{R}_+$ such that $\int_{\mathcal{X}}\rho(x)dx = 1$. Without loss of generality, we always consider centering and rescaling of the domain by affine transformation to achieve $\int_{\mathcal{X}}x\rho(x)dx = 0$ and $\int_{\mathcal{X}}xx^{\top}\rho(x)dx = I_n$. The general integral measure $\rho(x)dx^1\dots dx^n$ provides flexibility to consider arbitrary weighting of the input parameters over the domain although a uniform measure is sufficient for many applications \cite{Grey2017,Lukaczyk2014,glawsMHD,grey2019}.

Following the developments of~\cite{Constantine2015}, the eigenvectors are the \textit{important directions} of $f$. Rearranging \eqref{eq:C}, the non-negative eigenvalues become
\begin{equation} \label{eq:eigvals}
\lambda_i = \mathbb{E}\left[\left(\boldsymbol{w}_i^{\top}\overline{\nabla} f\right)^2\right], \,\, \text{for} \,\, i=1,\dots,n.
\end{equation}
Since the directional derivative $df_{x}[\boldsymbol{w}_i] \defeq \lim_{t \rightarrow 0} (f(x + t\boldsymbol{w}_i) - f(x))/t$ is equivalent to the inner product $\boldsymbol{w}_i^{\top} \nabla f(x)$, we have $\lambda_i = \mathbb{E}[df^2_{x}[\boldsymbol{w}_i]]$: the $i$th eigenvalue is the mean-squared differential along the $i$th eigenvector. The ordering $\lambda_1 \geq \dots \geq \lambda_r > \lambda_{r+1} = \dots =\lambda_n = 0$ ($1\leq r \leq n$) thus ranks the directions $\boldsymbol{w}_i$ by how much they change $f$ on average, down to the trailing $r+1,\dots,n$ directions that \textit{do not change $f$ at all}---the differentials over $\mathrm{span}\left \lbrace \vw_{r+1}, \dots, \vw_{n} \right \rbrace$ vanish:

\begin{proposition} \label{prop:eigs_and_constant_f}
	Let $\mathcal{X}$ be compact and regular closed, $\mathcal{X}=\overline{\mathrm{int}\,\mathcal{X}}$,
	let $f\in C^1(\mathcal{X})$ (so $\overline{\nabla}f$ extends continuously to $\mathcal{X}$), and let
	$\rho$ have full support, $\mathrm{supp}\,\rho=\mathcal{X}$. Write
	$\mathcal{S}\defeq\mathrm{span}\lbrace\boldsymbol{w}_{r+1},\dots,\boldsymbol{w}_n\rbrace$ for the trailing
	eigenspace of $C$. The following are equivalent:
	\begin{enumerate}
		\item[(i)] $\lambda_i=0$ for all $i=r+1,\dots,n$;
		\item[(ii)] $df_x[\boldsymbol{v}]=0$ for every $x\in\mathcal{X}$ and every $\boldsymbol{v}\in\mathcal{S}$;
		\item[(iii)] $f(x+\boldsymbol{v})=f(x)$ for every $x\in\mathcal{X}$ and every $\boldsymbol{v}\in\mathcal{S}$
		with $\lbrace x+t\boldsymbol{v}:t\in[0,1]\rbrace\subseteq\mathcal{X}$.
	\end{enumerate}
\end{proposition}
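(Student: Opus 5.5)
I will prove the cycle (i)$\Rightarrow$(ii)$\Rightarrow$(iii)$\Rightarrow$(i). The only analytic content is in (i)$\Rightarrow$(ii), where the integral identity \eqref{eq:eigvals} has to be converted into a pointwise statement.

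\textbf{(i)$\Rightarrow$(ii).} For each $i>r$, \eqref{eq:eigvals} gives
$$0=\lambda_i=\int_{\mathcal{X}}\bigl(\boldsymbol{w}_i^{\top}\overline{\nabla}f(x)\bigr)^2\rho(x)\,dx .$$
The integrand $g_i(x)\defeq(\boldsymbol{w}_i^{\top}\overline{\nabla}f(x))^2$ is continuous and nonnegative on $\mathcal{X}$, since $f\in C^1(\mathcal{X})$.

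Suppose $g_i(x_0)>0$ for some $x_0\in\mathcal{X}$. By continuity, $g_i>g_i(x_0)/2$ on a relatively open neighborhood $V$ of $x_0$ in $\mathcal{X}$. Because $x_0\in\mathrm{supp}\,\rho$, every neighborhood of $x_0$ carries positive $\rho$-mass, so $\int_V\rho\,dx>0$. This gives $\lambda_i\geq \tfrac{g_i(x_0)}{2}\int_V\rho\,dx>0$, a contradiction. Hence $g_i\equiv0$ on all of $\mathcal{X}$, including the boundary, for every $i>r$.

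By linearity of $\boldsymbol{v}\mapsto \boldsymbol{v}^{\top}\overline{\nabla}f(x)$, the differential $df_x[\boldsymbol{v}]=\boldsymbol{v}^{\top}\overline{\nabla}f(x)$ then vanishes for every $\boldsymbol{v}\in\mathcal{S}$.

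The main care point is the meaning of the full-support hypothesis. I will read $\mathrm{supp}\,\rho=\mathcal{X}$ as stating that every relatively open subset of $\mathcal{X}$ has positive $\rho$-measure. Regular closedness ensures this is compatible with $\rho$ being a density: every relatively open subset of $\mathcal{X}$ meets $\mathrm{int}\,\mathcal{X}$ in a nonempty open set of positive Lebesgue measure. Alternatively, one can argue only at interior points and extend to $\partial\mathcal{X}$ by continuity, using density of $\mathrm{int}\,\mathcal{X}$ in $\mathcal{X}$.

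\textbf{(ii)$\Rightarrow$(iii).} Let $x\in\mathcal{X}$ and $\boldsymbol{v}\in\mathcal{S}$ with the segment $\{x+t\boldsymbol{v}:t\in[0,1]\}\subseteq\mathcal{X}$. Define $\phi(t)=f(x+t\boldsymbol{v})$. Its derivative is $\phi'(t)=df_{x+t\boldsymbol{v}}[\boldsymbol{v}]=0$ for all $t\in[0,1]$, using one-sided derivatives at points where the segment touches $\partial\mathcal{X}$; this is legitimate since $\overline{\nabla}f$ extends continuously there. By the mean value theorem, or the fundamental theorem of calculus applied to the $C^1$ function $\phi$, we get $f(x+\boldsymbol{v})=\phi(1)=\phi(0)=f(x)$.

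\textbf{(iii)$\Rightarrow$(i).} Take $x\in\mathrm{int}\,\mathcal{X}$ and $\boldsymbol{v}\in\mathcal{S}$. For all small $|s|$ the segment from $x$ to $x+s\boldsymbol{v}$ stays in $\mathcal{X}$, and $s\boldsymbol{v}\in\mathcal{S}$, so (iii) gives $f(x+s\boldsymbol{v})=f(x)$. The difference quotient is therefore identically zero, and $df_x[\boldsymbol{v}]=0$ on $\mathrm{int}\,\mathcal{X}$. Continuity of $\overline{\nabla}f$ together with $\mathcal{X}=\overline{\mathrm{int}\,\mathcal{X}}$ extends this to all of $\mathcal{X}$. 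Then each integrand in \eqref{eq:eigvals} vanishes for $i>r$, so $\lambda_i=0$.
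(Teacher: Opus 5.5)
Your proof is correct and follows the paper's argument essentially step for step. Both use the same cycle (i)$\Rightarrow$(ii)$\Rightarrow$(iii)$\Rightarrow$(i): full support plus continuity turns the vanishing integral into pointwise vanishing, the fundamental theorem of calculus is applied along the segment, and interior difference quotients are extended to $\mathcal{X}$ by regular closedness. The only cosmetic difference is in (i)$\Rightarrow$(ii): you apply the support argument to each trailing eigenvector $\boldsymbol{w}_i$ and then use linearity, whereas the paper applies it directly to an arbitrary $\boldsymbol{v}\in\mathcal{S}$ via $\boldsymbol{v}^{\top}C\boldsymbol{v}=\sum_{k>r}c_k^2\lambda_k=0$.
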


Here $\mathcal{X}$ is \textit{regular closed}, $\mathcal{X}=\overline{\mathrm{int}\,\mathcal{X}}$. Since $\mathcal{X}$ coincides with the closure of its own interior, it carries no low-dimensional appendages---``whiskers'' or ``spikes''---along which the function could still vary despite a vanishing spectrum~\eqref{eq:eigvals}. Closed hyper-rectangles and balls qualify, as do the more general compact domains of section~\ref{sec:AS_intro}. The distinction anticipates a later choice of domain: the star-shaped supports natural on a manifold (Def.~\ref{def:riemannian_view}) may manifest such appendages, whereas the closed geodesic balls we ultimately adopt (Def.~\ref{def:AMG}) are the closure of an open ball and so satisfy regular-closedness automatically.

Motivated by~\ref{prop:eigs_and_constant_f}, instead of changing a function the most on average, we may reinterpret ``importance'' to motivate a different set of $r$-directions for a particular application \cite{hokanson2017,glaws2017inverse,Li1992,li2018sufficient}. In this work, we seek to emulate an active subspace approximation in an effort to generalize the \textit{explainable} interpretation of an \textit{ordered set} of important directions made precise by \eqref{eq:eigvals}.

\subsection{Ridge functions \& domain partition}
Suppose $C$ is rank deficient such that $\text{rank}(C) = r$ for some $1\leq r < n$. Per Prop. \ref{prop:eigs_and_constant_f}, the $(n-r)$ trailing eigenvalues are equal to zero if and only if the function is not changing along the $\left\lbrace \boldsymbol{w}_{r+1},\dots,\boldsymbol{w}_{n} \right\rbrace$ trailing directions. In this case, we can exactly represent $f(x)$ as a new function composed with fewer $r$-parameters, $h\left(W_r^{\top}x\right)$, where $h:\mathbb{R}^r \rightarrow \mathbb{R}$ such that
\begin{equation} \label{eq:ridge}
f(x) = h(W_r^{\top}x), \quad \text{for all} \quad x \in \mathcal{X}.
\end{equation}
In this case, $f$ is called a \textit{ridge function} and the span of the first $r$-columns of $W$ is called the \textit{active subspace} \cite{Constantine2015}. We use $W_r \defeq \left[\boldsymbol{w}_1 \, \dots\, \boldsymbol{w}_r\right]$ to denote a partition of the first $r$-columns of $W$ associated with the decaying order of eigenvalues. In contrast, the trailing $(n-r)$ orthogonal directions from the remaining columns $W_{\perp} \defeq \left[\boldsymbol{w}_{r+1} \, \dots \, \boldsymbol{w}_n\right]$ constitute a basis for the \textit{inactive subspace}. 

With $W = [W_r \,\, W_{\perp}]$ a new orthonormal basis for $\mathbb{R}^n$, the ambient space splits into the direct sum of the active and inactive subspaces,
\begin{equation} \label{eq:param_partition}
\left\lbrace x \in \mathbb{R}^n \,:\, x = W_r y + W_{\perp}z \right\rbrace,
\end{equation}
for all $y \in \mathbb{R}^r$ and $z \in \mathbb{R}^{n-r}$. This is related to our introductory motivation in section \ref{sec:hypothesis} such that $\mathcal{M} \defeq \text{Range}(W_r)$ and $\mathcal{M}^{\perp} \defeq \text{Range}(W_{\perp})$ informs a useful partition $\mathcal{M} \oplus \mathcal{M}^{\perp} \subseteq \mathbb{R}^n$ of the function's domain\footnote{Care must be taken in practice to appropriately restrict new parameter combinations to the original domain \cite{stinson2016, Constantine2015}}---two transverse linear submanifolds, one which changes $f$ the most on average, $\mathcal{M}$, and another which does not, $\mathcal{M}^{\perp}$. 


When the trailing eigenvalues are small but nonzero, a general $f$ (not necessarily a ridge function) is still well approximated over the leading directions, $f(x) \approx h(W_r^{\top}x)$ for some $h:\mathbb{R}^r \rightarrow \mathbb{R}$; \cite{Constantine2015} makes this precise,
\begin{equation} \label{eq:err_apprx}
\Vert f(x) - h(W_r^{\top}x) \Vert_{L^2_\rho(\mathcal{X})} \leq c_{\rho}\left(\lambda_{r+1}+\dots+\lambda_n\right)^{\frac{1}{2}}
\end{equation}
with $c_{\rho}>0$ depending on $\rho$ and $h$ the conditional average over the inactive coordinates $z \defeq W_{\perp}^{\top}x$. This ties the dimensionality $r$ to Hypothesis \ref{hypothesis} through $\epsilon = c_{\rho}(\lambda_{r+1}+ \dots + \lambda_n)^{1/2}$, and the resulting \textit{ridge approximation} over the active subspace is a near-stationary point for ridge recovery \cite{constantine2017near}. 

Given an explicit or approximate gradient, we seek an intrinsic form of \eqref{eq:C} which is generalizable to smooth manifolds and obtained at a similar computational cost to Algorithm 3.1 of \cite{Constantine2015} (random sampling to estimate active subspaces). Additional work formalizing the Monte Carlo approximation of the subspace is presented in \cite{constantine2014computing}. In section \ref{sec:intro-Riemannian-geo} we introduce an analogous algorithm for approximating the extension of active subspaces over Riemannian manifolds.

\subsection{Generalized problem statement}
We now seek structure analogous to the eigenspaces of \eqref{eq:C} when the domain of $f:\mathcal{X} \subseteq \mathcal{M} \rightarrow \mathbb{R}$ is a Riemannian $n$-manifold $(\mathcal{M},g)$. In direct analogy to active subspaces, consider:

\begin{problem} \label{problem}
	Given Riemannian $n$-manifold $(\mathcal{M},g)$, approximate ordered directions $\boldsymbol{v}_i \in T_{p_0}\mathcal{M}$ in a central tangent space at $p_0 \in \mathcal{M}$ for $i=1,\dots,r$ and $r \leq n$ which change $f:\mathcal{X} \subseteq \mathcal{M}  \rightarrow \mathbb{R}$ with square-integrable gradient more, by some analogous globalizing notion of the average.
\end{problem}


\section{A Riemannian view}
\label{sec:intro-Riemannian-geo}
For a quantity of interest $f:\mathcal{X} \subseteq \mathcal{M} \rightarrow \mathbb{R}$ on a Riemannian $n$-manifold $(\mathcal{M},g)$ we require a principled calculus generalizing active subspaces. We assume either an explicitly known domain---matrix or shape manifolds \cite{Absil2008,Schulz2014}---or an approximated manifold \cite{fletcher2004principal,alain2014regularized}, and develop the \textit{intrinsic} Riemannian view, the eigenspaces of an analog of \eqref{eq:C}, contrasting it with an \textit{extrinsic} perspective \cite{mukherjee2010learning,wu2010learning}. The underlying differential-geometry definitions and helpful constructions are collected in Appendix~\ref{app:geo}.

\subsection{Intrinsic vs. extrinsic perspective}
The intrinsic and extrinsic distinction is computational. Many treatments are motivated by smooth isometric embeddings $\iota:\mathcal{M} \hookrightarrow \mathbb{R}^{m}$, which exist for every Riemannian manifold by the popular theorem of~\cite{nash1956imbedding}, representing the metric of $\mathcal{M}$ as that induced in an ambient Euclidean space \cite{mukherjee2010learning,wu2010learning}. In practice all computation is performed on such ambient realizations and is therefore extrinsic: a manifold-learning chart is inferred from samples observed in ambient coordinates \cite{mukherjee2010learning,wu2010learning}, and Grassmannian computations use a representative Stiefel matrix in $\mathbb{R}^{n\times r}$ \cite{edelman1998geometry}. We use the inclusion map $\iota$, and the hat accent for elements of its image, to mark when an abstract object is viewed in ambient coordinates---e.g.\ $\widehat{x} \in \iota(S^2) \subset \mathbb{R}^3$ for the unit sphere. However, as it has been discovered, the two perspectives can yield genuinely different objects---the intrinsic Karcher mean versus the extrinsic mean \cite{srivastava2002monte,fletcher2004principal,bhattacharya2003large})---so we also pose the extension of \eqref{eq:C} intrinsically. This mitigates the convenience of a particular choice of embedding while recognizing the ways in which the two may coincide under conditions made precise below.

\subsection{Differential, tangent vectors, and the metric}
We take from Appendix~\ref{app:geo} the tangent space $T_p\mathcal{M}\cong\mathbb{R}^n$, the differential $df_p$ as the coordinate directional derivative \eqref{eq:eigvals}, and the Riemannian metric $g$, represented at a coordinate $x$ by a symmetric positive-definite matrix $G_x$ ($=I_n$ in the Euclidean case of section~\ref{sec:AS_intro}). Two features of the differential drive the development.
\begin{remark}
	The differential is local: there is, in general, no intrinsic interpretation of linear combinations (integration) of tangent vectors---or higher-rank tensors---drawn from distinct tangent spaces.
\end{remark}
\begin{remark}
	When the manifold is a Euclidean space, such linear combinations remain in that space. This is exactly the canonical isomorphism that the analysis of section~\ref{sec:AS_intro} (and Algorithm 3.1 of \cite{Constantine2015}) relies on.
\end{remark}
Denoting the \textit{tangent bundle} $T\mathcal{M} \defeq \coprod_{p\in \mathcal{M}}T_p\mathcal{M}$, the central difficulty is therefore to globalize the average gradient tensor product of \eqref{eq:C}---a Monte Carlo sum of tensors from distinct tangent spaces---once that canonical isomorphism is lost.

\subsection{Riemannian gradient}
The metric promotes the differential (a covector) to a vector field, the Riemannian gradient.
\begin{definition}[Riemannian gradient, \cite{Absil2008}, Ch. 3]
	\label{def:riemann_grad}
	Given a smooth scalar-valued function $f \in C^{\infty}(\mathcal{M})$ on a Riemannian manifold $(\mathcal{M},g)$, the \textit{gradient} of $f$ at $p \in \mathcal{M}$, denoted $\text{grad} f(p)$ or $\nabla_{\mathcal{M}} f(p)$, is the unique element of $T_p\mathcal{M}$ satisfying
	$$
	 g_p(\text{grad} f(p),\vv_p) = \langle \nabla_{\mathcal{M}}f(p), \vv_p \rangle_p = df_p[\vv],
	$$
	for all $\vv \in T_p\mathcal{M}$.
\end{definition}
In coordinates $\nabla_{\mathcal{M}}\widehat{f}(x) = G_x^{-1}d\widehat{f}^{\top}_x$, which coincides with the differential's components only when $G_x = I_n$ \cite{mukherjee2010learning,wu2010learning}; in general the covector $d\widehat f_x$ and the vector $\nabla_{\mathcal M}\widehat f(x)$ are distinct objects related by the metric. 


\subsection{Riemannian connections}
An affine connection $\nabla$ (Appendix~\ref{app:geo}) generalizes differentiating one vector field along another. It induces an \textit{isometry}, \textit{parallel transport} $\mathcal{P}_{t,t_0}:T_{\gamma(t)}\mathcal{M} \rightarrow T_{\gamma(t_0)}\mathcal{M}$ with $\gamma(t)=p$, $\gamma(t_0)=p_0$, carrying vectors uniquely along a smooth curve (\cite{Lee1997}, Thm. 4.11), and it determines the zero-acceleration \textit{geodesics}, the distance-minimizing solutions of the geodesic equations \cite{Lee1997}. We write $\text{exp}_{p_0}:T_{p_0}\mathcal{M} \cong \mathbb{R}^n \rightarrow \mathcal{M}$ for the parametrization solving the geodesic initial-value problem and $\text{exp}^{-1}_{p_0}$ for its inverse. In other words, over an appropriate restriction, $\text{exp}_{p_0}$ is a diffeomorphism inducing \textit{normal coordinate charts} \cite{Lee1997}, $\varphi^{-1} \defeq \text{exp}^{-1}_{p_0}$, $\varphi \defeq \text{exp}_{p_0}$.

\subsection{Normal coordinates \& neighborhoods}
Intrinsic quantities and scaled domains over Riemannian manifolds have a natural representation in normal coordinates.\footnote{A rescaling of geodesic submanifolds used for nondimensionalization is deferred to Appendix~\ref{app:geo}.} By Lemma 5.10 of \cite{Lee1997}, $\text{exp}_p:\mathcal{V} \subseteq T_p\mathcal{M} \rightarrow \mathcal{U} \subseteq \mathcal{M}$ is a diffeomorphism on a normal neighborhood $\mathcal{U}$ centered at $p$. Choosing an orthonormal basis $\lbrace \ve_i \rbrace$ of $T_p\mathcal{M}$, so $E \defeq [\ve_1,\dots,\ve_n]\in \mathbb{R}^{n\times n}$ is orthogonal, gives the chart
\begin{equation} \label{eq:nml_coords}
\varphi^{-1} \defeq E^{\top} \circ \text{exp}^{-1}_p:\mathcal{U} \rightarrow T_p\mathcal{M} \cong \mathbb{R}^n
\end{equation}
and parametrization
\begin{equation} \label{eq:nml_params}
\varphi \defeq \text{exp}_p \circ E:T_p\mathcal{M} \cong\mathbb{R}^n \rightarrow \mathcal{U}.
\end{equation}
By naturality of the exponential \cite{Lee1997}, $\text{exp}^{-1}_p(q)$ is identified with a vector in ambient coordinates, so composition in \eqref{eq:nml_coords} is simple matrix-vector multiplication. In a data-driven setting $E$ may be given by an approximated basis \cite{fletcher2004principal} or some other basis of interest. Figure \ref{fig:nml_neighborhood} shows an elliptical normal neighborhood on the $2$-sphere.

\begin{figure}[t]
	\centering
	\includegraphics[width=0.4\textwidth]{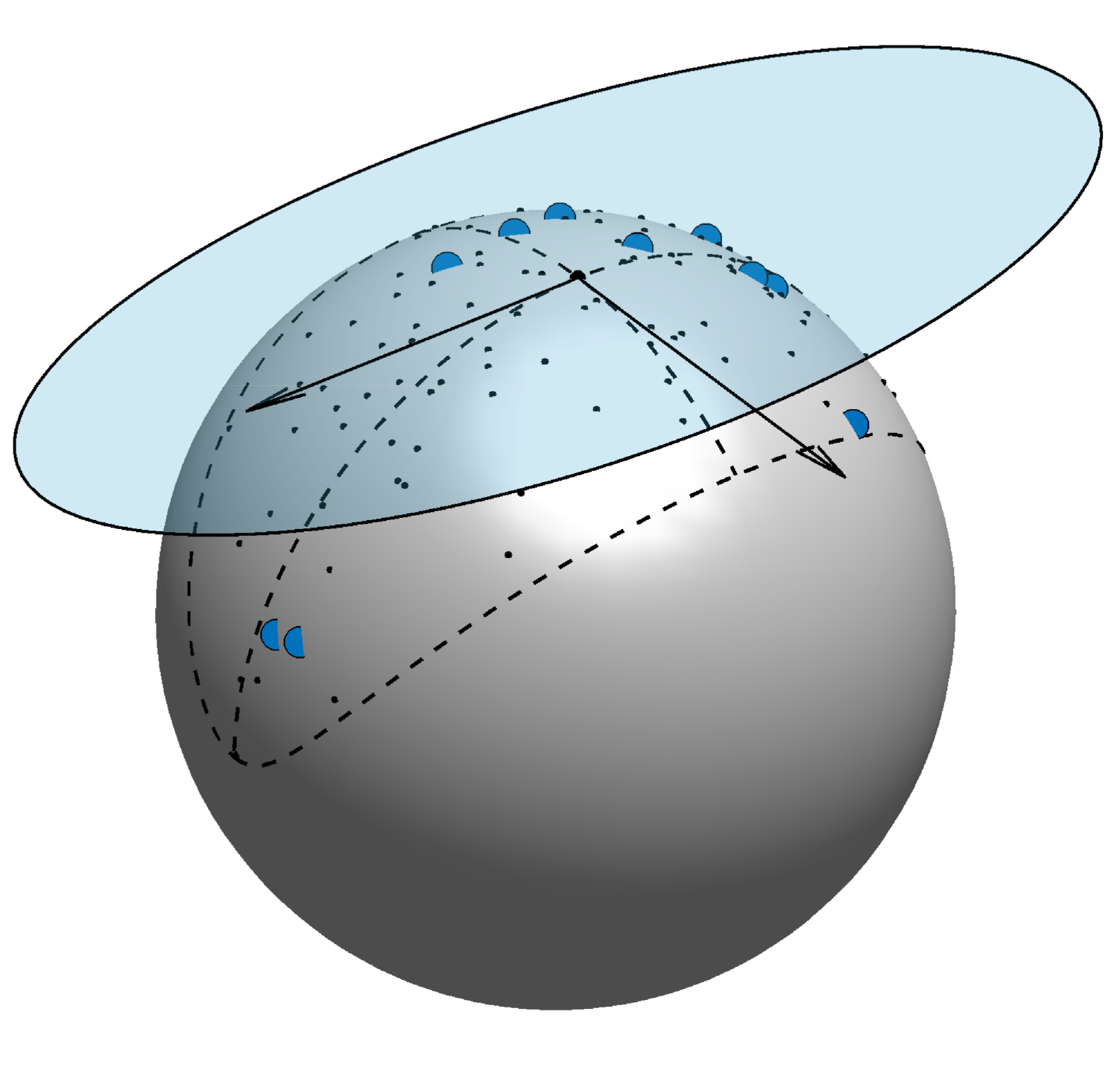}
	\caption{A sphere, $\iota(S^2) \subset \mathbb{R}^3$, shown with elliptical neighborhood and associated image under the exponential map.}
	\label{fig:nml_neighborhood}
\end{figure}

The normal coordinate chart \eqref{eq:nml_coords} has three properties which make it appealing:
\begin{proposition}[\cite{Lee1997}, some of Prop. 5.11] \label{prop:nml_props}
 (Some properties of normal coordinates) Let $\left( \mathcal{U}, x^i \right)$ be any normal coordinate chart [defined by any invertible $E \in \mathbb{R}^{n\times n}$] centered at $p \in \mathcal{U}$.
\begin{enumerate}
	\item For any $\vv_i \in T_p\mathcal{M}$, the geodesic $\gamma_{\vv_i}$ starting at $p$ with initial velocity vector $\vv_i$, i.e., $\text{exp}_p(\vv_i)$, is represented in normal coordinates by the radial line segment (compact subset of a $1$-dimensional subspace)
	$$
	\gamma_{\vv_i} = t\vv_i = \left(t\vv^1,\dots,t\vv^n\right)_i^{\top} \in \mathbb{R}^n
	$$
	as long as $\gamma_{\vv_i}$ stays within $\mathcal{U}$.
	\item The coordinates of $p$ are $\boldsymbol{0} \defeq (0,\dots,0)^{\top} \in \mathbb{R}^n$.
	\item The components of the metric at $p$ are $g_{ij} = \delta_{ij}$.
\end{enumerate}
\end{proposition}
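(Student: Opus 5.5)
Because $E$ is now any invertible matrix rather than an orthogonal one, I will work directly with the chart $\varphi = \exp_p \circ E$. Part~(3) is the only place where invertibility versus orthogonality matters.

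\emph{Properties (1) and (2).} Write $x = \varphi^{-1}(q) = E^{-1}\exp_p^{-1}(q)$. For $\vv_i \in T_p\mathcal{M}$, the geodesic $\gamma_{\vv_i}(t) = \exp_p(t\vv_i)$ is defined on the interval where $t\vv_i$ lies in the star-shaped domain $\mathcal{V}$ on which $\exp_p$ is a diffeomorphism. This uses the rescaling lemma $\gamma_{c\vv}(t) = \gamma_{\vv}(ct)$, which follows from uniqueness of solutions to the geodesic initial-value problem. On that interval,
\begin{equation*}
\varphi^{-1}(\gamma_{\vv_i}(t)) = E^{-1}\exp_p^{-1}(\exp_p(t\vv_i)) = t\,E^{-1}\vv_i .
\end{equation*}
This is a radial segment through the origin. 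After identifying $\vv_i$ with its coordinates $E^{-1}\vv_i$ in the basis given by the columns of $E$, it is exactly $t(\vv^1,\dots,\vv^n)_i^\top$. The segment is valid only while $\gamma_{\vv_i}$ remains in $\mathcal{U}$, because injectivity of $\exp_p$ is only guaranteed there. Setting $t=0$, or using $\exp_p(\boldsymbol{0}) = p$, gives coordinates $\boldsymbol{0}$ for $p$, which is (2).

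\emph{Property (3).} Since $d(\exp_p)_{\boldsymbol{0}}$ is the identity on $T_p\mathcal{M}$ under the canonical identification $T_{\boldsymbol{0}}(T_p\mathcal{M}) \cong T_p\mathcal{M}$, the coordinate vector $\partial_i|_p$ equals $d\varphi_{\boldsymbol{0}}[\ve_i] = E\ve_i = \ve_i$, the $i$th column of $E$. Therefore $g_{ij}(p) = g_p(\ve_i,\ve_j)$, that is, $G_{\boldsymbol{0}} = E^\top G E$ in the ambient inner product at $p$.

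\emph{Main obstacle.} This last identity gives $\delta_{ij}$ only when the columns of $E$ are $g_p$-orthonormal. That is the orthonormal-basis choice made in~\eqref{eq:nml_coords} and the hypothesis of Prop.~5.11 in \cite{Lee1997}. For a general invertible $E$ one instead gets $g_{ij}(p) = (E^\top E)_{ij}$ in an orthonormal frame, so (3) does not hold as stated. I would therefore restrict (3) to $E$ orthogonal, or equivalently read the bracketed condition as ``$E$ whose columns are orthonormal in $g_p$''. With that restriction, (3) is immediate from $d(\exp_p)_{\boldsymbol{0}} = \mathrm{id}$. Parts (1) and (2) hold verbatim for any invertible $E$ by the computation above.
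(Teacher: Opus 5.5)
Your proposal is correct and is essentially the standard argument behind Prop.~5.11 of \cite{Lee1997}, which the paper cites rather than proves: the rescaling lemma $\exp_p(t\vv)=\gamma_{\vv}(t)$ gives (1)--(2) for any invertible $E$ with the chart $E^{-1}\circ\exp_p^{-1}$, and $d(\exp_p)_{\boldsymbol{0}}=\mathrm{id}$ gives (3). Your caveat is also right and worth keeping: for a non-orthogonal $E$ one gets $g_{ij}(p)=(E^{\top}E)_{ij}$ in a $g_p$-orthonormal frame, so the bracketed ``any invertible $E$'' overstates item (3), which holds only for the orthogonal $E$ actually used in \eqref{eq:nml_coords} (where $E^{\top}=E^{-1}$) and assumed by Lee.
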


By these properties, the parallel transport to the origin used in \eqref{eq:C_par_trans} reduces to a choice of center $p_0 \in \mathcal{M}$, whose normal coordinates are $\boldsymbol{0}$---the intrinsic analog of centering a Euclidean domain on the origin. In a data-driven setting $p_0$ is the Karcher mean, the basis construction motivated by \cite{fletcher2004principal} offers a choice of $E$ in this central tangent space. Otherwise, we could select a nominal design with an arbitrary orthogonal $E$ around a point of interest, $p_0$. The option could be motivated by the application of interest but computing centers is a challenging inference in and of itself.

\subsection{Euclidean AMG}
When $\mathcal{M} \defeq \mathbb{R}^n$, parallel transport $\overline{P}_{t,t_0}$ is the identity (the Euclidean connection leaves vector entries constant \cite{Lee1997}), which is precisely what lets a tangent vector $\vv_x \in T_x\mathbb{R}^n$ be transported to the origin and treated as a free vector. Our component-wise integration \eqref{eq:C} is then reinterpreted as
\begin{equation} \label{eq:C_par_trans}
C\defeq \int_{\mathcal{X}} \overline{\mathcal{P}}^{-1}_{t_0,t}\left[\overline{\nabla}f(x)\right] \otimes \overline{\mathcal{P}}^{-1}_{t_0,t}\left[\overline{\nabla}f(x)\right]\rho(x)dx^1\dots dx^n
\end{equation}
over arbitrary $\gamma:\mathcal{I} \rightarrow \mathbb{R}^n$ with $\gamma(t_0) = 0$, $\gamma(t) = x$, and $\overline{\mathcal{P}}^{-1}_{t_0,t} = \overline{\mathcal{P}}_{t,t_0} = I_n$.
\begin{remark}
	Although the parallel vector field along any $\gamma$ is unique, parallel transport \textit{depends on the choice of $\gamma$}, and on the metric, in general. The exception is a Euclidean space, with $\ell_2$ inner product, where unique $\overline{\mathcal{P}}_{t,t_0} = I_n$ is independent of the path.
\end{remark}
The integrand of \eqref{eq:C_par_trans} is independent of $\gamma$ here only because the Euclidean connection has zero Christoffel symbols (zero curvature) \cite{Lee1997}. To obtain a unique analog on a curved $(\mathcal{M},g)$, we compensate for this path dependence by transporting along \textit{unique geodesics}. We'll refer to $\gamma$ as the unique geodesic and $\mathcal{P}_{t,t_0}$ parallel transport along it from $\gamma(t)$ to $\gamma(t_0)$, with computations carried out in normal coordinates through composition with $\text{exp}_{p_0}$.

\subsection{Active manifold-geodesics}
\label{sec:AMG}

Stemming from the development of \eqref{eq:C_par_trans}, we can define an extension serving as the analogous average tensor (outer) product of the gradient. To retain the interpretation that the eigenvalues of \eqref{eq:C} correspond to an ordering of mean squared directional derivatives \eqref{eq:eigvals}, compose with an isometry to preserve inner products inherent to the definition of directional derivatives---alluded to in the form of \eqref{eq:C_par_trans}. A choice of isometry retains the original intuition associated with the definition \eqref{eq:C}. That is, the directional derivative at any point in Def. \ref{def:riemann_grad} is unmodified after applying the isometry and thus eigenvalues are analogously interpreted as mean squared differentials (directional derivatives) \eqref{eq:eigvals}. This leads to Def. \ref{def:riemannian_view} as \textit{a Riemannian view on active subspaces}:

\begin{definition}[Riemannian average tensor product of the gradient]
	\label{def:riemannian_view}
	Given a Riemannian $n$-manifold $(\mathcal{M},g)$ with the linear connection $\nabla$ as the Riemannian (Levi-Civita) connection and a central point $p_0 \in \mathcal{M}$, let $\mu$ be the wrapped probability measure on $\mathcal{M}$ obtained by pushing forward, under $\text{exp}_{p_0}$, a Borel probability measure on $T_{p_0}\mathcal{M}$ whose star-shaped support lies within the cut locus of $p_0$, and write $\mathcal{X} \defeq \operatorname{supp}\mu \subseteq \mathcal{M}$. Each $p \in \mathcal{X}$ is joined to $p_0$ by a unique geodesic, so
		$$
		\mathcal{P}_{p}:T_{p}\mathcal{M} \rightarrow  T_{p_0}\mathcal{M}
		$$
	is radial parallel transport along the geodesic---a radial abbreviation of general $\mathcal{P}_{t,t_0}$ in section~\ref{sec:intro-Riemannian-geo}. Then, for sufficiently smooth $f:\mathcal{X} \rightarrow \mathbb{R}$ admitting vector field $\nabla_{\mathcal{M}} f:\mathcal{X} \rightarrow \mathcal{T}(\mathcal{M})$, we define $G_0$ at the central tangent space $T_{p_0}\mathcal{M}$ as
	\begin{equation} \label{eq:riemannian_view}
	G_0 = \int_{\mathcal{X}} \mathcal{P}_{p}\left[\nabla_{\mathcal{M}} f(p)\right]\otimes \mathcal{P}_{p}\left[\nabla_{\mathcal{M}} f(p)\right]\,d\mu(p).
	\end{equation}
\end{definition}

Points $p \in \mathcal{M}$ remain abstract until a computation demands coordinates: $x \defeq \varphi^{-1}(p) \in T_{p_0}\mathcal{M} \cong \mathbb{R}^n$ denotes the normal-coordinate representation of $p$ (so $\varphi^{-1}(p_0) = \boldsymbol{0}$), and $\widehat{x} \defeq \iota(p)$ its ambient representation under an isometric embedding. Each transported gradient $\mathcal{P}_{p}[\nabla_{\mathcal{M}} f(p)]$ lies in the \textit{single} central tangent space $T_{p_0}\mathcal{M}$, where the metric in the orthonormal frame is the identity, $g_{p_0}=I_n$ (Prop.~\ref{prop:nml_props}, property~3)---a statement at the central point alone; away from $p_0$ the metric departs from $I_n$ at second order through curvature \cite{doCarmo2017}. 

It is worth clarifying why a metric that is the identity \textit{only} at $p_0$ suffices to
realize $G_0$ as a matrix on a flat space. Parallel transport carries each Riemannian gradient $\nabla_{\mathcal{M}}f(p)\in T_p\mathcal{M}$ along its radial geodesic into the central tangent space $T_{p_0}\mathcal{M}$, an isometry with respect to $g$. As such, every transported vector $\mathcal{P}_p[\nabla_{\mathcal{M}}f(p)]$ is thereby anchored at the origin, where $g_{p_0}=I_n$. Then, we flatten the normal neighborhood by identifying its image under $\varphi^{-1}=\text{exp}^{-1}_{p_0}$ with the coordinate space $T_{p_0}\mathcal{M}\cong\mathbb{R}^n$, and endow that coordinate space with the constant Euclidean metric $I_n$ at \textit{every} point, not merely at the origin. 

This \textit{constant extension} is canonical rather than a convenience: as noted in section~\ref{sec:intro-Riemannian-geo}, the Euclidean connection has unique path-independent parallel transport $\overline{\mathcal{P}}_{t,t_0}=I_n$ agnostic to the path along which a vector is carried. So a vector rooted at the origin extends to a well-defined free vector rooted at each coordinate point $x=\varphi^{-1}(p)$, and $I_n$ is unambiguously the induced inner product structure at every such root. Without this second step the average \eqref{eq:riemannian_view} would read as an integral of vectors all pinned to the single point $p_0$---a set of $\mu$-measure zero---rather than as a second-moment over the flattened domain.

The curvature of $\mathcal{M}$ does not disappear under this flattening---it is carried entirely by
the transport $\mathcal{P}_p$, which is where all the geometry resides. What the construction avoids
is any explicit encounter with the Riemannian volume element. The Riemannian-volume average is the
special case $d\mu=\sqrt{\det G_x}\,dx$ in normal coordinates, but $\mu$ need not be this measure:
in practice one selects any full-support distribution on the closed geodesic ball
$\overline{B}_\delta(\boldsymbol{0})\subset T_{p_0}\mathcal{M}$---a bump or indicator density---samples it in the flat coordinate space, and estimates \eqref{eq:riemannian_view} by Monte Carlo
(Algorithm~\ref{alg:riemannian_MC_AS}), never forming $\sqrt{\det G_x}$. In this sense $\mu$ plays the role of the weight $\rho$ in \eqref{eq:C}, and the wrapped construction lets us integrate in flat coordinates while remaining agnostic to the volume form --- curvature is represented through the transported integrand.

The rank-one integrands are therefore symmetric positive semidefinite tensors on one fixed Euclidean space, and what is \textit{not} Euclidean is the measure: $\mu$ is, by construction, a \textit{wrapped distribution} \cite{mallasto2018wrapped,pennec2006intrinsic}---drawn in the tangent space and pushed onto $\mathcal{M}$ by $\text{exp}_{p_0}$, the simplest natural device when sampling against the Riemannian volume on $\mathcal{M}$ becomes intractable or unknown. Rescaling of the tangent-space factor keeps $G_0$ independent of the chosen orthonormal frame when assumptions of zero mean and identity covariance are useful~\cite{Constantine2015exploiting}.

Identify $\mathcal{P}_{p}\left[\nabla_{\mathcal{M}} f(p)\right] \in T_{p_0}\mathcal{M}$ as column vectors in $\mathbb{R}^n$, so $\text{Range}(G_0)$ is at most $n$-dimensional. Within the injectivity radius the geodesic $p_0\!\to\!p$, and hence its parallel transport, is unique, so the integrand---and therefore $G_0(\mu,p_0)$---is well-defined; the Monte Carlo estimate of \eqref{eq:riemannian_view} is integration against the empirical measure $\widehat\mu_N=\tfrac1N\sum_i\delta_{p_i}$, realized by sampling only in the tangent space (Algorithm~\ref{alg:riemannian_MC_AS}). The support constraint is explicit for many manifolds of interest---injectivity radius $\pi$ for $S^2$, $\pi/2$ for the Grassmannian \cite{edelman1998geometry}, and infinite for the nonpositively curved SPD manifold---so for the separable shape tensors \cite{grey2023separable, grey2025explainable} (a product of a Grassmannian and an SPD factor) the binding radius is $\pi/2$, and samples wrapped around a Fr\'echet mean within it satisfy this constraint by construction.

This restriction is not a shortcoming of the construction but an intrinsic phenomenon: the cut locus bounds any single normal neighborhood. It suggests a natural partitioning of the domain---covering $\mathcal{M}$ with normal neighborhoods anchored at several central points, each carrying its own $G_0$---so that a function is studied globally through an atlas of local analyses. This constitutes an explicit, intrinsic regularization scheme for finding important nonlinear paths through an embedding to be studied further in future work.

Equation \eqref{eq:riemannian_view} admits a non-negative eigendecomposition $G_0 = W_0\Lambda_0W_0^{\top}$ in the central tangent space $T_{p_0}\mathcal{M}$, which clearly depends on the choice of central (origin) point $p_0 \in \mathcal{M}$. However, the interpretation \eqref{eq:eigvals} of the eigenvalues generalizes \textit{exactly}---not merely by analogy. Extend each eigenvector $\vw_i$ over the support by inverse transport along the radial geodesics,
\begin{equation} \label{eq:AMG_field}
\vw_i(p) \defeq \mathcal{P}^{-1}_{p}[\vw_i] \in T_p\mathcal{M},
\end{equation}
so that $\vw_i(p_0)=\vw_i$. Then:
\begin{lemma}[Exact spectral interpretation] \label{lem:exact_eigvals}
	Let $G_0 = W_0\Lambda_0W_0^{\top}$ per Def.~\ref{def:riemannian_view}. For $i=1,\dots,n$,
	$$
	\lambda_i = \int_{\mathcal{X}} \left(df_p\left[\vw_i(p)\right]\right)^2 d\mu(p)
	$$
	are ordered mean-squared directional derivative of $f$ along the transported eigenvector field \eqref{eq:AMG_field}.
\end{lemma}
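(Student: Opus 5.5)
The argument mirrors the Euclidean identity \eqref{eq:eigvals}. The only new ingredient is that parallel transport is an isometry, so the transported inner products can be moved back to each $T_p\mathcal{M}$.

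First, since $G_0=W_0\Lambda_0W_0^{\top}$ with $W_0$ orthogonal in the orthonormal frame at $p_0$, where $g_{p_0}=I_n$ by Prop.~\ref{prop:nml_props}, we have $\lambda_i=\vw_i^{\top}G_0\vw_i=g_{p_0}(\vw_i,G_0\vw_i)$. I will insert the definition \eqref{eq:riemannian_view} and pull the fixed vector $\vw_i$ inside the integral. This is justified because $\vw_i$ is constant on the single space $T_{p_0}\mathcal{M}$, and the integrand is integrable because $\nabla_{\mathcal{M}}f$ is square integrable and $\mathcal{P}_p$ preserves norms. The result is
\[
\lambda_i=\int_{\mathcal{X}} g_{p_0}\bigl(\vw_i,\mathcal{P}_p[\nabla_{\mathcal{M}}f(p)]\bigr)^2\,d\mu(p),
\]
using $\vw^{\top}(\va\otimes\va)\vw=(\vw^{\top}\va)^2$.

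Second, for each $p\in\mathcal{X}$ the radial transport $\mathcal{P}_p$ is a linear isometry $T_p\mathcal{M}\to T_{p_0}\mathcal{M}$. It is well defined because the geodesic from $p_0$ is unique inside the cut locus, and it is an isometry because the Levi-Civita connection is metric-compatible. Therefore
\[
g_{p_0}\bigl(\vw_i,\mathcal{P}_p[\vu]\bigr)=g_p\bigl(\mathcal{P}_p^{-1}[\vw_i],\vu\bigr)=g_p\bigl(\vw_i(p),\vu\bigr)
\]
for every $\vu\in T_p\mathcal{M}$, where $\vw_i(p)$ is the transported field \eqref{eq:AMG_field}. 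Taking $\vu=\nabla_{\mathcal{M}}f(p)$ and applying Def.~\ref{def:riemann_grad} gives $g_p(\vw_i(p),\nabla_{\mathcal{M}}f(p))=df_p[\vw_i(p)]$. Substituting this into the integral yields the stated formula. The ordering is inherited from the ordering of $\Lambda_0$.

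The only delicate step is measurability and integrability of $p\mapsto(df_p[\vw_i(p)])^2$ with respect to $\mu$. I expect to handle it by working in normal coordinates. There $\mathcal{P}_p$ depends smoothly on $x=\varphi^{-1}(p)$ away from the cut locus, since it solves a linear ODE with smooth coefficients along $t\mapsto\exp_{p_0}(tx)$. The support lies inside the cut locus, so the integrand is the composition of continuous maps with the square-integrable gradient. The bound $|df_p[\vw_i(p)]|\le\Vert\nabla_{\mathcal{M}}f(p)\Vert_p$ then gives integrability. No curvature estimates are needed, because the identity is exact and pointwise.
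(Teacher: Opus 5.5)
Your proposal is correct and follows the paper's route: write $\lambda_i=\vw_i^{\top}G_0\vw_i$, bring the fixed vector inside the integral to get $\int_{\mathcal{X}}\langle\mathcal{P}_p[\nabla_{\mathcal{M}}f(p)],\vw_i\rangle_{p_0}^2\,d\mu(p)$, move the inner product back to $T_p\mathcal{M}$ using the isometry of parallel transport, and finish with the defining property of the Riemannian gradient. Your remarks on measurability and integrability go slightly beyond the paper, which leaves those points implicit.
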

Lemma~\ref{lem:exact_eigvals} is the justification for composing with parallel transport: the eigenvalues of $G_0$ are mean-squared differentials of $f$ measured \textit{on the manifold}, pointwise along a canonical extension of the central directions. It is also intrinsic and subsequently independent of any inherent embedding of the manifold. Computationally these intrinsic operations may still be \textit{evaluated} from an ambient realization, by naturality of the Riemannian connection and exponential \cite{Lee1997}, as in section~\ref{sec:eg_2sphere}. Properties of the fields \eqref{eq:AMG_field} are developed in section~\ref{sec:AMG_field}.
 \begin{remark}
    In pursuit of an exact generalization of \eqref{eq:C} such that we preserve the interpretable property \eqref{eq:eigvals}, we require composition with an isometry and there is no more natural (intrinsic) choice than parallel transport---a choice made exact by Lemma~\ref{lem:exact_eigvals}.
\end{remark}

We interpret \eqref{eq:riemannian_view} as a natural extension of \eqref{eq:C} over Riemannian $n$-manifold $(\mathcal{M},g)$ with an appropriate domain restricted to a normal neighborhood but recognize that the choices inherent to Def. \ref{def:riemannian_view} may not extend well in a general tensor integration setting. We define \textit{active manifold-geodesics} (AMG) as

\begin{definition}[Active Manifold-Geodesics] \label{def:AMG}
	Given the eigendecomposition $G_0 = W_0\Lambda_0 W_0^{\top}$ such that $W_0 \in \mathbb{R}^{n\times n}$ orthogonal and $\Lambda_0 = \text{diag}(\lambda_1,\dots,\lambda_n)$, the first $r$-columns of $W_0$, $W_r \defeq [\vw_1,\dots,\vw_r] \in \mathbb{R}^{n\times r}$, ordered according to $\lambda_1 \geq \dots \geq \lambda_r >\lambda_{r+1} = \dots = \lambda_n = 0$ induce an ordering of active manifold-geodesics (AMG),
	\begin{equation} \label{eq:AMG}
	\mathcal{W}_{1,i} \defeq \lbrace \text{exp}_{p_0}\left(\mathcal{V} \cap \mathcal{A}_{1,i}\right) \,:\, \mathcal{A}_{1,i} = \text{span}[\vw_i] \rbrace,
	\end{equation}
	serving as an orthogonal frame for the active $r$-submanifold of the normal neighborhood,
	\begin{equation} \label{eq:r_AMGsubmanifold}
	\mathcal{W}_r \defeq \lbrace \text{exp}_{p_0}\left(\mathcal{V} \cap \mathcal{A}_r\right) \,:\, \mathcal{A}_r = \text{span}[W_r]\rbrace,
	\end{equation}
	as the image under $\text{exp}_{p_0}$ diffeomorphic to $\mathcal{V} \cap \mathcal{A}_r \subseteq T_{p_0}\mathcal{M}$ for open $\mathcal{V} \subset T_{p_0}\mathcal{M}$ containing the origin.
\end{definition}

Given apparent choices about the design of $\mathcal{X}$ in applications, we emphasize the utility of taking $\mathcal{X}$ as a closed geodesic ball under $\overline{B}_{\delta}(\boldsymbol{0}) \subset \mathcal{V} \subset T_{p_0}\mathcal{M}$ with radius less than or equal to the injectivity radius for any particular computational model---e.g., choosing $\mu$ supported on $\overline{B}_{\delta}(\boldsymbol{0})$ with a (normalized) bump or indicator density for some radius $\delta >0$ such that $\mathcal{X} \subseteq \text{exp}_{p_0}\left(\overline{B}_{\delta}(\boldsymbol{0})\right)$ remains a diffeomorphism. 

Def. \ref{def:riemannian_view} and Def. \ref{def:AMG} are still ``locally'' defined with respect to the manifold since uniqueness is only retained by restricting to local geodesic balls and thus $\mathcal{X}$ is necessarily star-shaped \cite{Lee1997}. If a Riemannian manifold is geodesically complete \cite{Lee1997,Absil2008}, we have---at a minimum---introduced a globalizing notion for the expectation (integral) of a $(0,2)$-tensor by virtue of parallel transport over $\mathcal{T}(\mathcal{M})$. However, to retain an analogous unique integrand, $\mathcal{X}$ is at most a compact subset of the $n$-manifold so the function is restricted to a normal neighborhood of maximal geodesics. The apparent locality of $\mathcal{X} \subseteq \text{exp}_{p_0}(\mathcal{V}) \subseteq \mathcal{M}$ is only as limiting as the data: whenever the sampled domain lies within the injectivity radius of $p_0$---intrinsically local in this precise sense---the restriction to a single normal neighborhood captures the response faithfully, and the reduction is meaningful for exactly that data.

\subsection{The extrinsic perspective}
\label{sec:extrinsic}
Similarly, choosing isometric embedding $\iota:\mathcal{M} \hookrightarrow \mathbb{R}^{m}$, and integrating against the common wrapped measure $\mu$ so that the two constructions are directly comparable (Figure~\ref{fig:intrinsic_extrinsic}), we can also write
\begin{equation} \label{eq:emb_opg}
C_{\iota} \defeq \int_{\mathcal{X}} d\iota\left[\nabla_{\mathcal{M}} f(p)\right]\otimes d\iota\left[\nabla_{\mathcal{M}} f(p)\right] d\mu(p)
\end{equation}
such that $C_{\iota} = W_{\iota}\Lambda_{\iota} W_{\iota}^{\top}$ is the associated non-negative eigendecomposition explicitly dependent on the choice of $\iota$. This definition, which also reduces to \eqref{eq:C} in the case of a Euclidean space by selecting $\iota$ such that $d\iota \defeq I_n$, involves a \textit{choice} of non-unique isometric embedding and thus non-unique integrand in light of questions around rigidity \cite{hang2009rigidity,berger1981some}---i.e., certain rigid motion reparametrizing the isometric embedding could serve as an advantage or disadvantage in the numerical approximation of \eqref{eq:emb_opg}. The gradients of \eqref{eq:emb_opg} are unique by naturality of the Riemannian connection but the integrand of \eqref{eq:emb_opg} is not given a choice of $\iota$.

\begin{figure*}[t]
	\centering
	\includegraphics[width=0.8\textwidth,trim={0 100pt 0 0},clip]{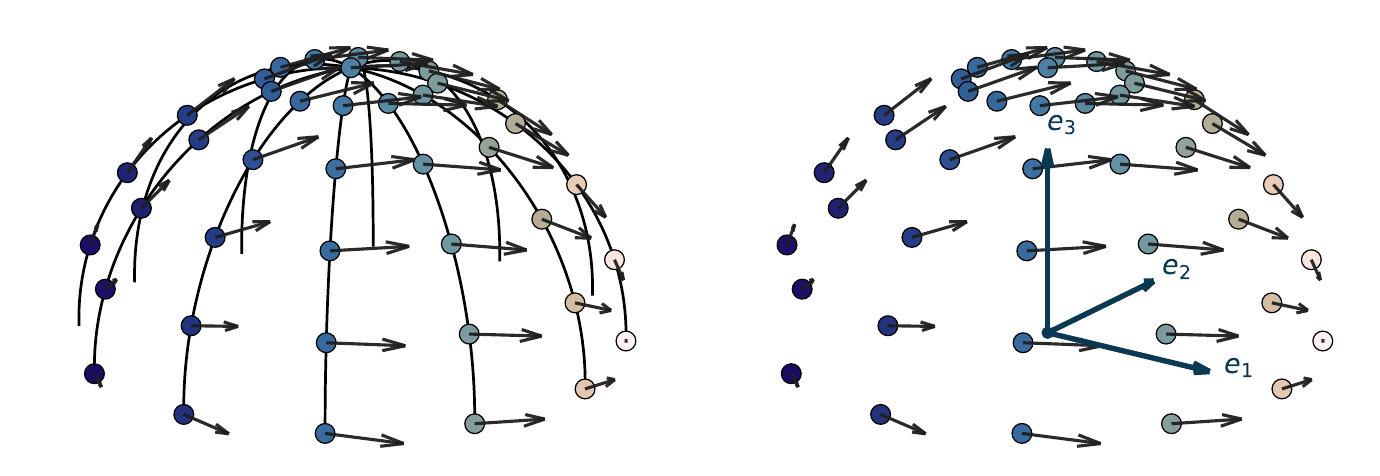}
	\caption{The two fields of the \textit{same} tangential gradients over a hemisphere of $\iota(S^2)$. Colors correspond to function values registered at the same samples across panels. \textbf{Left (intrinsic)}: the construction \eqref{eq:riemannian_view} carries each Riemannian gradient to the central tangent space by parallel transport along its unique radial geodesic (black curves through the central point). \textbf{Right (extrinsic)}: the construction \eqref{eq:emb_opg} reads the same gradients as vectors in the ambient coordinates $(\ve_1,\ve_2,\ve_3)$ fixed by the embedding $\iota$. An extrinsic perspective subsequently requires compression to the central tangent space (Thm.~\ref{thm:normal_eigvals}).}
	\label{fig:intrinsic_extrinsic}
\end{figure*}

Equation~\eqref{eq:emb_opg} is precisely the manifold gradient outer product of \cite{wu2010learning,mukherjee2010learning}: the ambient average of the \textit{tangential} gradients $d\iota[\nabla_{\mathcal{M}}f]$. Its dominant eigenvectors recover the ambient dimension-reduction directions (\cite{wu2010learning}, Prop.~11)---an \textit{ambient} basis that must be projected to a central tangent space to compare with the intrinsic $G_0$ of \eqref{eq:riemannian_view}. The following theorem characterizes that projection:

\begin{theorem} \label{thm:normal_eigvals}
    Let $\iota:\mathcal{M} \hookrightarrow \mathbb{R}^m$ ($m>n$) be an isometric embedding, $\widehat{x}_0$ a central point, $E_0\in\mathbb{R}^{m\times n}$ an orthonormal basis of the central tangent space $T_{\widehat{x}_0}\iota(\mathcal{M})$ in ambient coordinates, and $\pi_0\defeq E_0E_0^{\top}$ the orthogonal projection onto that tangent space. Then the representation of $C_{\iota}$ on the central tangent space is the second moment of the \textit{centrally-projected} gradients,
    $$
    E_0^{\top} C_{\iota} E_0 = \int_{\mathcal{X}}\left(E_0^{\top} d\iota[\nabla_{\mathcal{M}} f]\right)\otimes\left(E_0^{\top} d\iota[\nabla_{\mathcal{M}} f]\right) d\mu(p);
    $$
    in particular, for every central direction $\widehat{\vw}=E_0\vw\in T_{\widehat{x}_0}\iota(\mathcal{M})$,
    $$
    \vw^{\top}\!\left(E_0^{\top} C_{\iota} E_0\right)\!\vw = \int_{\mathcal{X}}\left(\widehat{\vw}^{\top} \pi_0\,d\iota[\nabla_{\mathcal{M}} f]\right)^2 d\mu(p).
    $$
\end{theorem}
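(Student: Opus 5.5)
The plan is to use linearity of the Bochner integral under a fixed linear map, then one identity for orthogonal projections. First I would record the setup. For each $p\in\mathcal{X}$, write $\widehat{\vg}(p)\defeq d\iota[\nabla_{\mathcal{M}}f(p)]\in\mathbb{R}^m$. Since $\iota$ is an isometric embedding, $\widehat{\vg}(p)$ lies in $T_{\iota(p)}\iota(\mathcal{M})$. The integrand of \eqref{eq:emb_opg} is the rank-one matrix $\widehat{\vg}(p)\widehat{\vg}(p)^{\top}$. Square-integrability of the gradient (assumed in Def.~\ref{def:riemannian_view}), together with $\Vert\widehat{\vg}(p)\Vert=\Vert\nabla_{\mathcal{M}}f(p)\Vert_{g}$ by isometry, makes $C_\iota$ a well-defined finite matrix integral.

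Second, I would pull the constant matrices $E_0^{\top}$ and $E_0$ inside the integral. The map $A\mapsto E_0^{\top}AE_0$ is a fixed linear map $\mathbb{R}^{m\times m}\to\mathbb{R}^{n\times n}$, independent of $p$, so it commutes with integration against $\mu$ entrywise. This gives
$$
E_0^{\top}C_\iota E_0=\int_{\mathcal{X}}E_0^{\top}\widehat{\vg}\,\widehat{\vg}^{\top}E_0\,d\mu(p)=\int_{\mathcal{X}}\bigl(E_0^{\top}\widehat{\vg}\bigr)\otimes\bigl(E_0^{\top}\widehat{\vg}\bigr)\,d\mu(p),
$$
which is the first claim. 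Note that $E_0^{\top}\widehat{\vg}$ is the coordinate vector of the central projection $\pi_0\widehat{\vg}$ in the basis $E_0$.

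Third, for the quadratic form I would take $\vw\in\mathbb{R}^n$, contract the identity above on both sides, and again move $\vw$ inside the integral. This gives the integrand $(\vw^{\top}E_0^{\top}\widehat{\vg})^2$. Then I rewrite $\vw^{\top}E_0^{\top}=\widehat{\vw}^{\top}$. Because $E_0^{\top}E_0=I_n$ and $\widehat{\vw}=E_0\vw$ lies in the central tangent space, we have $\pi_0\widehat{\vw}=\widehat{\vw}$. By symmetry of $\pi_0$ this yields $\widehat{\vw}^{\top}\widehat{\vg}=\widehat{\vw}^{\top}\pi_0\widehat{\vg}$, and the same holds for $\widehat{\vw}^{\top}E_0E_0^{\top}\widehat{\vg}$. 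So $\vw^{\top}E_0^{\top}\widehat{\vg}=\widehat{\vw}^{\top}\pi_0\widehat{\vg}$, and squaring and integrating gives the displayed formula.

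There is no genuine obstacle; the only point needing care is bookkeeping. The vectors $\widehat{\vg}(p)$ live in different tangent spaces $T_{\iota(p)}\iota(\mathcal{M})$, but all of them sit in the common ambient $\mathbb{R}^m$. That common ambient space is exactly why the extrinsic integral is defined without transport. The compression $E_0^{\top}(\cdot)E_0$ only records the $\pi_0$-component, which differs from $\mathcal{P}_p[\nabla_{\mathcal{M}}f]$ in general; this is the conflation the theorem is meant to expose. I would remark on this after the proof rather than within it.
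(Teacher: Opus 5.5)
Your proposal is correct and matches the paper's proof step for step. Both pull the constant compression $E_0^{\top}(\cdot)E_0$ inside the integral, contract with $\vw$ to obtain the integrand $(\widehat{\vw}^{\top}\widehat{\vg})^2$, and then use $\pi_0\widehat{\vw}=\widehat{\vw}$ with the symmetry of $\pi_0$ to move the projection onto the gradient. The paper adds only a closing partial-isometry remark, $\Vert E_0^{\top}\widehat{\vv}\Vert_2=\Vert\pi_0\widehat{\vv}\Vert_2$, which is interpretive and not needed for the displayed identities.
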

The eigenvalues of $E_0^{\top} C_{\iota} E_0$ are therefore the mean-squared directional derivatives of the \textit{centrally-projected gradient} $\pi_0\,d\iota[\nabla_{\mathcal{M}} f]$---projections between tangent spaces which are identity mappings in the Euclidean setting of section \ref{sec:AS_intro}. Since $E_0^{\top}$ is a partial isometry, this projection is an isometry on $T_{\widehat{x}_0}\iota(\mathcal{M})$ but annihilates its orthogonal complement: no gradient component orthogonal to the central tangent space contributes, and an ambient direction $\widehat{\vb}$ with $\pi_0\widehat{\vb}=\boldsymbol{0}$ carries no weight in the spectrum. The conflation depends on both the selected $\iota$ and the orientation of the central tangent space in the ambient Euclidean space. By contrast the intrinsic $G_0$ of \eqref{eq:riemannian_view} carries each gradient to the central tangent space by parallel transport---an isometry---and so loses no directional derivative (Lemma~\ref{lem:exact_eigvals}).
\begin{remark}
    The utility in the interpretation \eqref{eq:riemannian_view} is \textit{retaining an ordering} of important directions in a central tangent space devoid of a partial isometry.
\end{remark}

\subsection{Local analysis: projection versus transport}
\label{sec:local_analysis}
The two constructions assemble the \textit{same} tangential gradients and differ only in the map carrying each gradient to the central tangent space: the intrinsic \eqref{eq:riemannian_view} applies the isometry $\mathcal{P}_p$, while the central representation of the extrinsic \eqref{eq:emb_opg} applies the restriction of $\pi_0$ to $T_{\widehat{x}}\iota(\mathcal{M})$ (Thm.~\ref{thm:normal_eigvals}). On hyperspheres the discrepancy between the two maps admits a closed form. Identities invoked below are collected with proofs in Appendix~\ref{app:sphere}.

\begin{lemma}[Projection versus transport on hyperspheres] \label{lem:sphere_proj_trans}
	Let $\iota(S^n)\subset\mathbb{R}^{n+1}$ be the unit hypersphere with central point $\widehat{x}_0$, and let $\widehat{x}\in\iota(S^n)$ have geodesic distance $d\defeq d(\widehat{x},\widehat{x}_0)<\pi$. If $\vu\in T_{\widehat{x}}\iota(S^n)$ the unit radial direction (tangent at $\widehat{x}$ to the geodesic from $\widehat{x}_0$) then, for every $\vv \in T_{\widehat{x}}\iota(S^n)$,
	$$
	\pi_0\,\vv = \mathcal{P}_{\widehat{x}}[\vv] - (1-\cos d)\,\langle \vv,\vu\rangle\, \mathcal{P}_{\widehat{x}}[\vu],
	$$
	central projection is parallel transport composed with a $\cos d$ contraction of the radial component. Transverse components are transported exactly.
\end{lemma}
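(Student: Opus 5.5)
The plan is to compute explicitly on the great circle through $\widehat{x}_0$ and $\widehat{x}$, splitting each tangent vector into its radial component and its component orthogonal to that circle. When $d=0$ the identity is trivial, since $\pi_0$ restricted to $T_{\widehat{x}_0}$ and $\mathcal{P}_{\widehat{x}_0}$ are both the identity and $1-\cos 0=0$. So assume $0<d<\pi$, and let $\vu_0\in T_{\widehat{x}_0}\iota(S^n)$ be the unit initial velocity of the unit-speed geodesic $\gamma(t)=\cos t\,\widehat{x}_0+\sin t\,\vu_0$, so that $\gamma(d)=\widehat{x}$. Its velocity at $\widehat{x}$ is $\vu=-\sin d\,\widehat{x}_0+\cos d\,\vu_0$, which is the radial direction in the statement.

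First I will record the standard transport facts on $S^n$, proved in Appendix~\ref{app:sphere}. Because $\gamma$ is a geodesic, its velocity is parallel along it, so $\mathcal{P}_{\widehat{x}}[\vu]=\vu_0$. Any ambient vector $\vb$ orthogonal to both $\widehat{x}_0$ and $\vu_0$ stays tangent along $\gamma$ as the constant field $\vb$. Its ambient derivative is zero, so its tangential projection is zero as well, and hence it is parallel: $\mathcal{P}_{\widehat{x}}[\vb]=\vb$. These are the facts to check carefully. Once they hold, everything else is linear algebra.

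Next I decompose $\vv\in T_{\widehat{x}}\iota(S^n)$ as $\vv=\langle\vv,\vu\rangle\vu+\vb$, where $\vb\perp\vu$ and $\vb\perp\widehat{x}$. Since $\widehat{x}$ and $\vu$ span the same plane as $\widehat{x}_0$ and $\vu_0$, the vector $\vb$ is orthogonal to that whole plane. Two consequences follow. First, $\pi_0\vb=\vb-\langle\vb,\widehat{x}_0\rangle\widehat{x}_0=\vb=\mathcal{P}_{\widehat{x}}[\vb]$, which gives the claim that transverse components are transported exactly. Second, the radial part gives
\[
\pi_0\vu=\vu-\langle\vu,\widehat{x}_0\rangle\widehat{x}_0=\cos d\,\vu_0=\cos d\,\mathcal{P}_{\widehat{x}}[\vu].
\]

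Combining the two parts by linearity yields
\[
\pi_0\vv=\mathcal{P}_{\widehat{x}}[\vb]+\cos d\,\langle\vv,\vu\rangle\mathcal{P}_{\widehat{x}}[\vu]=\mathcal{P}_{\widehat{x}}[\vv]-(1-\cos d)\langle\vv,\vu\rangle\mathcal{P}_{\widehat{x}}[\vu],
\]
which is the stated identity. Here $\pi_0=I-\widehat{x}_0\widehat{x}_0^{\top}$ is the orthogonal projector onto $T_{\widehat{x}_0}$, which equals $E_0E_0^{\top}$. The only real obstacle is justifying the parallel-transport characterization: the Levi-Civita connection of the embedded sphere is the tangential projection of the ambient derivative. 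I would cite naturality of the connection from \cite{Lee1997} for this rather than rederive it.
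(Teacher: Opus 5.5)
Your proposal is correct and follows essentially the same route as the paper's proof. You split $\vv$ into its component along $\vu$ and a component orthogonal to the geodesic plane, transport the first via the parallel velocity field and the second as a constant ambient field, project each with $\pi_0=I-\widehat{x}_0\widehat{x}_0^{\top}$, and assemble by linearity; your separate handling of $d=0$ (where $\vu$ is undefined) and your use of the plane rotation in place of the paper's dimension count are only minor refinements.
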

The identity implies a generic, second-order agreement of the two perspectives---with hypersphere constants explicit and, on general manifolds, constants set by how the embedding curves:
\begin{proposition}[Local agreement of the two perspectives] \label{prop:local_agreement}
	Let $\mathcal{X}$ lie within the geodesic ball of radius $R$ about $p_0$, with $R$ less than the injectivity radius, and let $G_0$ and $C_\iota$ be defined against the same $\mu$. Then

	\begin{align*}
		\left\Vert E_0^{\top}C_{\iota}E_0 - G_0 \right\Vert_2 &\leq C \int_{\mathcal{X}} d^2(p,p_0)\, \Vert \nabla_{\mathcal{M}}f(p)\Vert^2_g \, d\mu(p) \\
		& \leq C\, R^2 \sup_{\mathcal{X}}\Vert\nabla_{\mathcal{M}}f\Vert_g^2,
	\end{align*}

	where $C$ depends only on the second fundamental form of $\iota(\mathcal{M})$ and its curvature over the support; for the unit hypersphere one may take $C=1+R^2/4$, via Lemma~\ref{lem:sphere_proj_trans}. Consequently the eigenvalues of the two central representations agree to $\mathcal{O}(R^2)$ (Weyl), and whenever $G_0$ has a spectral gap $\eta \defeq \lambda_r - \lambda_{r+1} > 0$ the dominant eigenspaces agree to $\mathcal{O}(R^2/\eta)$ (Davis--Kahan \cite{Golub1996}).
\end{proposition}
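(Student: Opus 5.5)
The plan is to compare the two constructions integrand by integrand, using that both assemble the same tangential gradient $\vv_p\defeq\nabla_{\mathcal{M}}f(p)$ and differ only in the linear map carrying it to $T_{p_0}\mathcal{M}\cong\mathbb{R}^n$. By Thm.~\ref{thm:normal_eigvals}, $E_0^{\top}C_\iota E_0=\int \vb_p\otimes\vb_p\,d\mu$ with $\vb_p\defeq E_0^{\top}\pi_0\,d\iota[\vv_p]$. By \eqref{eq:riemannian_view}, $G_0=\int \va_p\otimes\va_p\,d\mu$ with $\va_p\defeq\mathcal{P}_p[\vv_p]$, expressed in the frame $E_0$. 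Writing $\ve_p\defeq\vb_p-\va_p$, we have
$$\vb_p\vb_p^{\top}-\va_p\va_p^{\top}=\ve_p\va_p^{\top}+\va_p\ve_p^{\top}+\ve_p\ve_p^{\top}.$$
Since $\mathcal{P}_p$ is an isometry, $\Vert\va_p\Vert=\Vert\vv_p\Vert_g$. Everything therefore reduces to a pointwise estimate $\Vert\ve_p\Vert\le c\,d^2(p,p_0)\Vert\vv_p\Vert_g$. Given that estimate, the integrand differs in spectral norm by at most $(2c\,d^2+c^2d^4)\Vert\vv_p\Vert_g^2$. Moving the norm inside the integral (Minkowski/triangle inequality for Bochner integrals) and using $d\le R$ yields both displayed bounds with $C=2c+c^2R^2$.

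On the hypersphere the pointwise estimate is read off Lemma~\ref{lem:sphere_proj_trans}: $\ve_p=-(1-\cos d)\langle\vv_p,\vu\rangle\,\mathcal{P}_{\widehat{x}}[\vu]$. Since $0\le 1-\cos d\le d^2/2$ and $\Vert\mathcal{P}_{\widehat x}[\vu]\Vert=1$, we get $\Vert\ve_p\Vert\le\tfrac{d^2}{2}\Vert\vv_p\Vert$, so $c=1/2$. Then $2c\,d^2+c^2d^4=d^2(1+d^2/4)\le d^2(1+R^2/4)$, which is exactly the claimed constant. Alternatively one can expand the rank-one difference directly, and the same $d^2(1+d^2/4)$ falls out.

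For a general embedding, the pointwise estimate is the step I expect to be the main obstacle, namely showing that the first-order discrepancy between $\pi_0$ and $\mathcal{P}_p^{-1}$ vanishes. I would do it as follows.
\begin{enumerate}
\item Let $\gamma$ be the unit-speed radial geodesic from $p_0$ to $p$, and let $V(t)$ be the parallel field along $\gamma$ with $V(d)=\vv_p$.
\item In ambient coordinates the Gauss formula gives $\tfrac{d}{dt}\,d\iota[V]=\mathrm{II}(\dot\gamma,V)$, a normal vector, since the tangential part vanishes by parallelism.
\item Integrating from $0$ to $d$ and applying $\pi_0$ gives
$$\pi_0\,d\iota[\vv_p]-d\iota[V(0)]=\int_0^d\pi_0\,\mathrm{II}(\dot\gamma,V)\,dt.$$
\item At $t=0$ the normal space is annihilated by $\pi_0$. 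Hence $\Vert\pi_0 N(t)\Vert\le t\,\sup\Vert\nabla \mathrm{II}\Vert$-type constants for normal vectors $N(t)$ at $\gamma(t)$, which is controlled by the rotation of the normal space, itself governed by $\mathrm{II}$ through the Weingarten map.
\item Substituting gives $\Vert\ve_p\Vert\le\tfrac{d^2}{2}\,\sup\Vert\mathrm{II}\Vert^2\,\Vert\vv_p\Vert$, using $\Vert V\Vert=\Vert\vv_p\Vert$. This produces $c$ depending only on $\mathrm{II}$ and its derivative (equivalently, curvature via Gauss) over the support.
\end{enumerate}
As a check, on $S^n$, where $\mathrm{II}=-\langle\cdot,\cdot\rangle\widehat x$, this recovers $1-\cos d\approx d^2/2$.

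Finally, the spectral consequences follow from the standard perturbation theorems. Both central representations are symmetric, so Weyl's inequality gives $|\lambda_i(E_0^{\top}C_\iota E_0)-\lambda_i(G_0)|\le\Vert E_0^{\top}C_\iota E_0-G_0\Vert_2=\mathcal{O}(R^2)$. With gap $\eta=\lambda_r-\lambda_{r+1}>0$ of $G_0$, the Davis--Kahan $\sin\Theta$ theorem bounds the principal angles between the dominant $r$-eigenspaces by $\Vert E_0^{\top}C_\iota E_0-G_0\Vert_2/\eta$, possibly with a factor $2$ or using $\eta$ minus the perturbation. This gives $\mathcal{O}(R^2/\eta)$.
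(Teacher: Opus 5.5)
Your proposal is correct. On the unit hypersphere it is the paper's own argument: Lemma~\ref{lem:sphere_proj_trans} gives $\Vert\ve_p\Vert\le(1-\cos d)\Vert\vv_p\Vert_g\le\tfrac{d^2}{2}\Vert\vv_p\Vert_g$, the rank-one expansion gives $d^2(1+d^2/4)$, and the norm is moved inside the integral. The spectral step is also the same; the paper settles your hedge by using $\Vert\Delta\Vert_2/(\eta-\Vert\Delta\Vert_2)$ under $\eta>\Vert\Delta\Vert_2$, with $\Delta\defeq E_0^{\top}C_\iota E_0-G_0$.

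The genuine difference is the general embedding. The paper works in normal coordinates. It Taylor-expands the frame $F(t)=d(\iota\circ\exp_{p_0})_{t\bar{x}}$ and uses $\Gamma(0)=\boldsymbol{0}$ to see that the linear term is the normal-valued $\mathrm{II}(\bar{x},\cdot)$, so $\pi_0F(t)=E_0+\mathcal{O}(t^2)$. Separately, it uses $\Vert\Gamma(t\bar{x})\Vert=\mathcal{O}(t)$ with Gr\"onwall to show that the coordinate components of a parallel field are constant to $\mathcal{O}(t^2)$. Its constant is left qualitative, hidden in Taylor remainders.

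You instead integrate the Gauss formula along the parallel field directly in $\mathbb{R}^m$. This gives the exact identity $\pi_0\,d\iota[\vv_p]-d\iota[\mathcal{P}_p\vv_p]=\int_0^d\pi_0\,\mathrm{II}(\dot\gamma,V)\,dt$, so only the tilt of the normal space remains to be bounded. Your route is coordinate-free, avoids Gr\"onwall, and yields an explicit constant $C=\sup\Vert\mathrm{II}\Vert^2\bigl(1+\tfrac{1}{4}\sup\Vert\mathrm{II}\Vert^2R^2\bigr)$ that depends on $\mathrm{II}$ alone. On $S^n$ it reduces exactly to $1+R^2/4$: there the integrand has norm $\vert\langle\vv_p,\vu\rangle\vert\sin t$, which integrates to precisely $1-\cos d$ and recovers Lemma~\ref{lem:sphere_proj_trans}. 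The paper's route, in exchange, explains the second-order agreement through the vanishing Christoffel symbols of the normal chart it uses throughout.

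One fix to your item~4: no $\nabla\mathrm{II}$ enters, and the inequality should be stated precisely. For $N(t)$ normal at $\gamma(t)$, write $\pi_0N(t)=(\pi_0-\pi_{\gamma(t)})N(t)$, with $\pi_{\gamma(t)}$ the tangent projection at $\gamma(t)$. The derivative $\tfrac{d}{ds}\pi_{\gamma(s)}$ maps tangent vectors by $\mathrm{II}(\dot\gamma,\cdot)$ and normal vectors by its adjoint, so its operator norm is $\Vert\mathrm{II}(\dot\gamma,\cdot)\Vert\le\sup\Vert\mathrm{II}\Vert$. Hence $\Vert\pi_0-\pi_{\gamma(t)}\Vert\le t\sup\Vert\mathrm{II}\Vert$. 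Combined with $\Vert N(t)\Vert\le\sup\Vert\mathrm{II}\Vert\,\Vert\vv_p\Vert_g$, this gives your item~5 with $c=\tfrac12\sup\Vert\mathrm{II}\Vert^2$, the supremum taken over the geodesic ball of radius $R$.
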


\begin{remark}[When the perspectives are equivalent] \label{rem:equivalence}
	Proposition~\ref{prop:local_agreement} separates three statements. (i) As \textit{matrices on the central tangent space}, the extrinsic representation $E_0^{\top}C_{\iota}E_0$ and the intrinsic $G_0$ always agree to $\mathcal{O}(R^2)$---with no exceptional set---and exactly when $\iota(\mathcal{M})$ is an affine subspace (the Euclidean case $d\iota = I_n$, recovering \eqref{eq:C}). (ii) Identifying a \textit{dominant direction} from either matrix additionally requires a spectral gap exceeding the $\mathcal{O}(R^2)$ discrepancy. (iii) The common extrinsic \textit{practice} \cite{mukherjee2010learning,wu2010learning}---eigendecompose the ambient $m\times m$ matrix $C_\iota$ first, then project its dominant eigenvector $\widehat{\vb}$ to the central tangent space---additionally requires $\pi_0\widehat{\vb}\neq\boldsymbol{0}$: the ambient eigen-ordering weighs gradient components that $\pi_0$ annihilates (Thm.~\ref{thm:normal_eigvals}), so a direction dominant in ambient space can be tangentially invisible. It is (iii), not (i), that can fail: where $\pi_0\widehat{\vb}=\boldsymbol{0}$, the projection annihilates the ambient importance carried in its nullspace. This is the measure-zero aligned set for the sphere ridge, but the failure set is geometry-dependent and need not have measure zero in general; the central-representation agreement (i) holds regardless. The examples of section~\ref{sec:eg_2sphere} exhibit each consideration.
\end{remark}

\begin{remark}[Projected dominance versus dominant projection] \label{rem:noncommute}
	Approximating activity over a manifold asks two things in turn. Activity must first be identified, by integrating a gradient second moment---either \eqref{eq:riemannian_view} or \eqref{eq:emb_opg}---and eigendecomposing the result; it must then be extended over the manifold, either by recomputing a decomposition on each new tangent space or by parallel transport of the central frame (section~\ref{sec:AMG_field}). For the first task the two perspectives agree quadratically. The constant $E_0^{\top}$ commutes with integration (Thm.~\ref{thm:normal_eigvals}), so $E_0^{\top}C_{\iota}E_0$ is itself a second moment of centrally-projected gradients, and its leading eigenvector---the \textit{dominant projection}---recovers the intrinsic eigenvalues to $\mathcal{O}(R^2)$ and the dominant eigenspace to $\mathcal{O}(R^2/\eta)$ (Prop.~\ref{prop:local_agreement}). What that agreement does not license is \textit{projected dominance}: eigendecomposing the ambient $C_{\iota} = \sum_{i=1}^m \lambda_i\,\vw_i\vw_i^{\top}$ first and projecting its dominant eigenvector afterward. On the central tangent space,
	$$
	E_0^{\top}C_{\iota}E_0 = \sum_{i=1}^m \lambda_i\,\vu_i\vu_i^{\top}, \qquad \vu_i \defeq E_0^{\top}\vw_i,
	$$
	is a sum over $m > n$ directions that are linearly dependent and non-orthogonal, each scaled by $\Vert\vu_i\Vert = \cos\theta_i$ for $\theta_i$ the angle between $\vw_i$ and the central tangent space. The weight carried by direction $i$ is therefore $\lambda_i\cos^2\theta_i$, so ambient dominance leaning toward the normal space is overtaken, and at $\theta_1 = \pi/2$ it is annihilated outright (Thm.~\ref{thm:normal_eigvals}). Eigenvalues survive this compression in the weak sense of interlacing, $\lambda_i(C_{\iota}) \geq \lambda_i(E_0^{\top}C_{\iota}E_0) \geq \lambda_{i+m-n}(C_{\iota})$ \cite{Golub1996}, but the eigenvectors do not transfer at all. Projected dominance is not dominant projection, and activity should never be \textit{extended} by it.
\end{remark}

\begin{remark}
    The extrinsic collapse of the partial isometry---wherever the dominant ambient direction meets $\text{Null}(\pi_0)$---is by no means catastrophic. Vanishing components annihilated by the projection may simply require a rigid action on the embedding---i.e., a different choice of $\iota$.
\end{remark}

The same second-order locality governs how the Euclidean theory of section~\ref{sec:AS_intro} transfers to normal coordinates. Applied to the pullback $f\circ\text{exp}_{p_0}$ over the normal-coordinate domain $\text{exp}^{-1}_{p_0}(\mathcal{X})$, the ridge-approximation bound \eqref{eq:err_apprx} employs the eigenpairs of the coordinate-gradient average, which agree with those of $G_0$ to $\mathcal{O}(R^2)$ by the identical frame-comparison argument. The same bound~\eqref{eq:err_apprx} therefore holds on the manifold up to $\mathcal{O}(R^2)$ corrections, exact in the limit $R \to 0$. The same rate governs the local recovery of ambient ridge structure in normal coordinates, demonstrated numerically in section~\ref{sec:eg_2sphere}.

\subsection{The AMG frame field}
\label{sec:AMG_field}
The extensions \eqref{eq:AMG_field} deserve study in their own right. Since geodesics within the injectivity radius depend smoothly on their endpoint and parallel transport preserves inner products, $\{\vw_1(p),\dots,\vw_n(p)\}$ is a smooth orthonormal frame field over the normal neighborhood of $p_0$: every point receives a full set of ordered directions, with $\lambda_i$ the mean-squared differential along the $i$th field (Lemma~\ref{lem:exact_eigvals}). Moreover, because a geodesic parallel-transports its own velocity, $\vw_i(\text{exp}_{p_0}(t\vw_i)) = \tfrac{d}{dt}\,\text{exp}_{p_0}(t\vw_i)$: each active manifold-geodesic \eqref{eq:AMG} is the integral curve of its own eigenvector field through $p_0$. Integral curves through other points are generally \textit{not} geodesics, and we deliberately retain the exponential images of Def.~\ref{def:AMG}---parametrized by straight lines in a fixed tangent space---rather than flowing the fields (cf.\ the degeneracy risk of fully nonlinear reductions, section~\ref{sec:lit_review}).

\begin{remark}[Ill-posedness of unconstrained curve-based reduction] \label{rem:dichotomy}
	Fully nonlinear methods---active manifolds \cite{bridges2019active} and neural variants \cite{zanoni2025neural}---search the domain for a single curve $\gamma$ such that $f \approx h \circ \pi_{\gamma}$, with $\pi_{\gamma}$ a projection onto the curve. Without an explicit constraint on the complexity of $\gamma$, this search is ill-posed. For any continuous $f$ and any tube radius $\tau>0$ a space-filling sweep of the domain reproduces $f$ within its modulus of continuity $\omega_f(\tau)$, so the accuracy $\epsilon\geq \omega_f(\tau)$ of Hypothesis~\ref{hypothesis} is met for \textit{every} $f$ by taking $\tau$ small enough; yet a curve whose $\tau$-tube covers an $n$-dimensional domain must have length of order $\tau^{1-n}$ (Appendix~\ref{app:reach}), so minimizing sequences degenerate (explode in scale) and the vanishing error certifies no genuine one-dimensional structure. Well-posedness requires regularizing the admissible curves, and geodesics are the natural extreme of such regularization: zero covariant acceleration, with extents restricted to the geodesic ball. Def.~\ref{def:AMG} restricts the search accordingly---to exponential images of straight lines in a single tangent space---trading expressiveness for identifiability.
\end{remark}
Given a Riemannian manifold, intrinsic perspectives offer a natural complementary rigidity. If the trailing eigenvalues vanish, $\lambda_{r+1}=\dots=\lambda_n=0$ (with $f\in C^1$ and $\mu$ of full support), then Lemma~\ref{lem:exact_eigvals} gives $\int_{\mathcal{X}} (df_p[\vw_i(p)])^2\,d\mu(p)=0$ for each $i>r$. The squared-differential over the frame, $(df_p[\vw_i(p)])^2$, is nonnegative and continuous ($f\in C^1$ and $\vw_i(p)$ smooth) and $\mu$ has full support. By the argument of Prop.~\ref{prop:eigs_and_constant_f}, full support offers pointwise vanishing $df_p[\vw_i(p)]=0$ for all $p\in\mathcal{X}$. Thus, $f$ is unchanged along every inactive field on $\mathcal{X}$. Assuming $f$ smooth, these invariances close under Lie brackets ($Xf = Yf = 0$ implies $[X,Y]f = X(Yf) - Y(Xf) = 0$), so $df$ is annihilated by the whole distribution, $\overline{\mathcal{F}}$, that the inactive fields Lie-generate. Assuming $\overline{\mathcal{F}}$ has constant rank, only two outcomes are possible. (i) Either the inactive directions and their brackets ``knit'' together into a family of lower-dimensional surfaces filling $\mathcal{X}$---a \textit{foliation}, by the Frobenius theorem \cite{Lee2003}---and $f$ is a function of the leaves (subsets of the foliation): a genuine manifold analog of the ridge \eqref{eq:ridge}, with \textit{leaves} playing the role of the inactive subspace. (ii) Or the inactive directions bracket-generate, reaching every point of a connected $\mathcal{X}$ (Chow--Rashevskii theorem \cite{chow1939systeme,agrachev2020comprehensive}), and $f$ is constant outright.

In this geometric setting, \textit{there is no space-filling middle ground}: a non-constant response with $r$ active directions carries surface-like inactive structure. Additional details are available in Appendix~\ref{app:reach}. The foliation is the exact, global object, non-linearly reducing the domain dimension based on changes in $f$. However, just as Euclidean active subspaces summarize a ridge rather than reconstructing level sets, the AMGs of Def.~\ref{def:AMG} are tractable, interpretable, variance-ordered surrogates over domains which often contain all the input data of interest. AMGs reproduce the leaves exactly only in the flat, transport-invariant case in which the trailing eigenvalues vanish identically, and to $\mathcal{O}(R^2)$ otherwise (Prop.~\ref{prop:local_agreement}). 

The dichotomy of nonlinear dimension reduction versus activity over Riemannian manifolds thus motivates the geodesic regularization---low-complexity, identifiable directions in a single tangent space---rather than offering a rival global construction. The contrast is also problem-dependent. In the setting of nonlinear dimension reduction, we may not have knowledge of an underlying Riemannian manifold topology to leverage as the domain. But when we do, an AMG is a natural regularized restriction and several applications, like those posed over spaces of curves, offer precisely this type of domain~\cite{srivastava2016functional}.

The frame field, $\vw_i(p)$, is parallel along radial geodesics by construction, but it is not a parallel frame: transporting around a closed loop generally rotates it (holonomy; on $S^2$ the rotation is the enclosed area \cite{Lee1997}), so radial transport is part of the definition---canonical given $(\mu,p_0)$ and matching the wrapped construction of Def.~\ref{def:riemannian_view}. The extrinsic analog extends an ambient direction $\widehat{\vb}$ by pointwise orthogonal projection, $\widehat{x} \mapsto \pi_{\widehat{x}}\,\widehat{\vb}$ with $\pi_{\widehat{x}}$ the projection onto $T_{\widehat{x}}\iota(\mathcal{M})$ \cite{wu2010learning}; that field is smooth but degenerates---its magnitude is the cosine of the angle between $\widehat{\vb}$ and the tangent space, vanishing wherever $\widehat{\vb}$ is normal to the manifold (on the sphere, at $\widehat{x}=\pm\widehat{\vb}$: precisely the aligned centers of section~\ref{sec:eg_2sphere})---whereas the intrinsic field \eqref{eq:AMG_field} is unit-norm everywhere. Note also the converse caution inherited from \eqref{eq:eigvals}: an ambient ridge \eqref{eq:ridge} does not, in general, restrict to a manifold ridge---the transported inactive fields need not annihilate $df$ even when the ambient trailing eigenvalues vanish. Regardless, ambient ridge structure is still recovered locally at the $\mathcal{O}(R^2)$ rate of the same second-order locality (Prop.~\ref{prop:local_agreement}; Figures~\ref{fig:amg_ridge_recovery} and~\ref{fig:amg_ridge_recovery_nl}). In applications, the dominant field $\vw_1(p)$ assigns to every configuration $p$ its most influential perturbation direction: a sensitivity vector field over $\mathcal{X}$.

Consider the extension task of Remark~\ref{rem:noncommute} with both perspectives informed by a \textit{single} integration. Intrinsically, one central eigendecomposition extends canonically: the transported frame \eqref{eq:AMG_field} is orthonormal, unit-norm, and consistently ordered over the support, and each $\lambda_i$ retains its exact interpretation (Lemma~\ref{lem:exact_eigvals}). The extrinsic counterpart of that same integration is the projected-dominance field $\widehat{x} \mapsto \pi_{\widehat{x}}\,\widehat{\vb}$, which is inexpensive but degenerates (Figure~\ref{fig:frame_field_compare}) and carries an ambient ordering that need not survive restriction (Remark~\ref{rem:noncommute}). \textit{Activity should never be extended by the partial isometry}. A faithful extrinsic extension must instead recompute the dominant projection, forming a fresh block $E_{\widehat{x}}^{\top}C_{\iota}E_{\widehat{x}}$ and its eigendecomposition at each new tangent space. The comparison is then fairly drawn against a re-centered intrinsic construction---transports and all, at each new point---rather than against the constant transported frame of \eqref{eq:AMG_field}. The latter extensions requiring repeated eigendecompositions in either perspective inherit the same computational budget and define interesting flows for nonlinear extensions.

\subsection{Approximating AMG}
We have not yet addressed the computational challenges of approximating
\eqref{eq:riemannian_view} and the local diffeomorphism $\text{exp}_{p_0}$. The exponential and
inverse-exponential maps solve initial- and boundary-value problems for the geodesic system, a set of $n$ coupled second-order O.D.E.'s whose coefficients---the Christoffel symbols of $g$---are intrinsic. The \textit{number} of solves is fixed by the Monte Carlo sample count, one geodesic and one transport along it per sample, so the slow $\mathcal{O}(N^{-1/2})$ rate governs how many solves buy a target accuracy independent of dimension. The more challenging computational burden is the boundary-value problems---the inverse exponential for normal coordinates and the Karcher mean for centering---whose cost and conditioning degrade in the high-dimensional embeddings typical of shape-space applications. It is approximating these intrinsic maps, far more than the Monte Carlo rate, that bound tractability in practice. Nonetheless, we introduce an algorithm for the approximation of \eqref{eq:riemannian_view} as Algorithm \ref{alg:riemannian_MC_AS} and call on efficient procedures to battle the curse of dimensionality for approximating the intrinsic maps more generally. Many algebraic routines exist for computing or approximating intrinsic maps for manifolds of interest like hyperspheres and matrix manifolds.
\begin{algorithm}[t]
\small
	Given the wrapped measure $\mu$ of Def. \ref{def:riemannian_view} and the map $\nabla_{\mathcal{M}} \widehat{f}_i:\mathbb{R}^m \rightarrow \mathbb{R}^m$ in ambient coordinates,
	\begin{enumerate}
		\item Draw $N$ independent samples $\left\lbrace x_i \right\rbrace_{i=1}^N$ from $\mu$ in the central tangent space and push forward, $\widehat{x}_i = \text{exp}_{\widehat{x}_0}(x_i)$ (or accept input samples $\lbrace\widehat{x}_i\rbrace$ in the data-driven setting).
		\item For each sample, compute $\nabla_{\mathcal{M}} \widehat{f}_i \defeq \nabla_{\mathcal{M}} \widehat{f}(\widehat{x}_i) \in \mathbb{R}^m$ and the quantity of interest $f_i = f(\widehat{x}_i)$.
		\item Compute or approximate parallel transport to the central tangent space,
		$$
		\mathcal{P}_{\widehat{x}_i}\left[\nabla_{\mathcal{M}} \widehat{f}_i\right],
		$$
		along the unique geodesic joining $\widehat{x}_i$ to $\widehat{x}_0$.
		\item Compute the average tensor product of the parallel transported gradients and the associated eigenvalue decomposition,
		\begin{equation*}
		\frac{1}{N}\sum_{i=1}^{N} \mathcal{P}_{\widehat{x}_i}\left[\nabla_{\mathcal{M}} \widehat{f}_i\right] \otimes \mathcal{P}_{\widehat{x}_i}\left[\nabla_{\mathcal{M}} \widehat{f}_i\right] = \tilde{W}_0\tilde{\Lambda}_0\tilde{W}_0^{\top},
		\end{equation*}
		where $\tilde{W}$ is the orthogonal matrix of eigenvectors, and $\tilde{\Lambda} = \text{diag}(\tilde{\lambda}_1,\dots,\tilde{\lambda}_m)$ is the diagonal matrix of eigenvalues in descending order. Equivalently, take the singular value decomposition
		$$
		\frac{1}{\sqrt{N}}\left[\mathcal{P}_{\widehat{x}_1}\left[\nabla_{\mathcal{M}} \widehat{f}_1\right],\dots,\mathcal{P}_{\widehat{x}_N}\left[\nabla_{\mathcal{M}} \widehat{f}_N\right]\right] = \tilde{W}_0\sqrt{\tilde{\Lambda}_0}\tilde{V}_0^{\top}.
		$$
	\end{enumerate}
	\caption{Monte Carlo approximation of $G_0$}
	\label{alg:riemannian_MC_AS}
\end{algorithm}
\normalsize

The average formed in Algorithm~\ref{alg:riemannian_MC_AS} estimates the ambient realization $\widehat{G}_0 \defeq E_0 G_0 E_0^{\top} \in \mathbb{R}^{m\times m}$, not the intrinsic $G_0$ of \eqref{eq:riemannian_view}: each transported gradient is stored in ambient coordinates and lies in $\text{Range}(E_0) = T_{\widehat{x}_0}\iota(\mathcal{M})$, so $\widehat{G}_0$ has rank at most $n$ and shares the nonzero spectrum of the intrinsic $n\times n$ matrix $G_0$ \textit{exactly}, the remaining $m-n$ eigenvalues vanishing up to machine precision (the $\tilde{\lambda}_3$ of section~\ref{sec:eg_2sphere}). Active directions are read from the leading eigenvectors either way; passing to the intrinsic $n\times n$ matrix is the coordinate change $E_0^{\top}(\cdot)\,E_0$.

A variety of supplementary approximations may be necessary to execute Algorithm \ref{alg:riemannian_MC_AS} in practice. The authors \cite{Absil2008} introduce the concept of a \textit{retraction map} which constitutes a local approximation of $\text{exp}_{p_0}$ through projection to the manifold given an isometric embedding in an ambient Euclidean space. A retraction could be used to supplement approximations of the Karcher mean to define the central tangent space and subsequent normal coordinates. The use of such an approximation, if it can be shown to converge to the true map, does not interfere with the uniqueness of the defined integrand \eqref{eq:riemannian_view} given the naturality of the exponential and Riemannian connection \cite{Lee1997}. Additionally, \cite{Absil2008} also introduce the notion of a general vector transport which serve to approximate parallel transport. 
Efforts should be taken to inform appropriate choices for an approximation of the diffeomorphism $\text{exp}_{p_0}$ and parallel transport $\mathcal{P}_{p}$ to balance required accuracy and computational complexity enabling Algorithm \ref{alg:riemannian_MC_AS}.

There is also the possibility that we require some approximation of the Riemannian gradient in local coordinates $\nabla_{\mathcal{M}} \widehat{f}_i$. Fortunately, the local Euclidean nature of the manifold suggests Taylor series may provide useful insights \cite{Lee2003}. The work \cite{mukherjee2010learning} and \cite{wu2010learning} offer approximations of $\nabla_{\mathcal{M}} \widehat{f}_i$ which converge based on the intrinsic dimension of the manifold---as opposed to the potentially inflated dimension of the ambient Euclidean space. Efficient approximations of the gradient are also essential in the case that gradient evaluations of a model are not readily available. 

\section{AMG over the $2$-sphere}
\label{sec:eg_2sphere}

Embedding the sphere as $\iota(S^2) \subset \reals^3$, we compute the intrinsic objects (geodesics, normal coordinates, parallel transport) from the ambient representation via naturality of the exponential map \cite{Absil2008,edelman1998geometry}. Consistent with data being observed extrinsically \cite{wu2010learning,mukherjee2010learning}, we build a \textit{data-driven} approximation of \eqref{eq:riemannian_view} over the $2$-sphere under the Riemannian (tangential) connection. Beyond serving as a visualizable example, the sphere is the geometry of \textit{preshape} spaces: Kendall's landmark preshapes are hyperspheres \cite{kendall1984}, as are length-normalized elastic open curves \cite{mio2007shape,Joshi2007}, so the constructions demonstrated here transfer directly to functions of shape.

\subsection{Exponential map \& projection}
Utilizing the exponential map, we move along geodesics from a point $\widehat{x}\in \iota(S^2)$ in the tangent direction $\widehat{\vv}_{\widehat{x}} \in T_{\widehat{x}}S^2$ at speed $\Vert \widehat{\vv}_{\widehat{x}} \Vert_2$ represented by great circles as points in a subspace spanned by $\widehat{x}$ and $\widehat{\vv}_{\widehat{x}}$. In other words, for a point $\widehat{x} = (\widehat{x}^1,\widehat{x}^2,\widehat{x}^3)^T \in \iota(S^2) \subset \reals^3$ and tangent vector $\widehat{\vv}_{\widehat{x}} = (\widehat{v}^1,\widehat{v}^2,\widehat{v}^3)^T \in T_{\widehat{x}}\iota(S^2) \subset \reals^3$, the exponential map for the unit $2$-sphere is parametrized by 
\begin{equation} \label{eq:sphere_exp}
\text{exp}_{\widehat{x}}(t\widehat{\vv}_{\widehat{x}}) \defeq \widehat{x}\cos(t\Vert \widehat{\vv}_{\widehat{x}} \Vert_2) + \frac{\widehat{\vv}_{\widehat{x}}}{\Vert \widehat{\vv}_{\widehat{x}} \Vert_2}\sin(t\Vert \widehat{\vv}_{\widehat{x}} \Vert_2),
\end{equation}
with $t \in \mathcal{I} \subset \reals$ for some interval $\mathcal{I}$ and the standard Euclidean metric $\Vert \widehat{\vv}_{\widehat{x}} \Vert^2_2 = (\widehat{v}^1)^2 + (\widehat{v}^2)^2 + (\widehat{v}^3)^2$. Again, \eqref{eq:sphere_exp} is simply a (non-unique) parametrization of a circle in the subspace spanned by $\widehat{x}$ and $\widehat{\vv}_{\widehat{x}}$ given the embedding of the unit sphere in $\reals^3$. Note, the expression~\ref{eq:sphere_exp} is dimension-free, holding verbatim on any unit hypersphere $\iota(S^n) \subset \mathbb{R}^{n+1}$. This, together with the closed-form parallel transport and the tangent-space decomposition it induces, is collected in Appendix~\ref{app:sphere}. 

Additionally, we must introduce the inverse of the exponential map in ambient coordinates denoted, $\text{exp}^{-1}_{\widehat{x}}:\widehat{\mathcal{M}} \rightarrow T_{\widehat{x}}\widehat{\mathcal{M}}$, to map geodesic points to a \textit{central tangent space} for subsequent averaging. For the special case $\widehat{\mathcal{M}} \defeq \iota(S^2)$ we can write the inverse exponential for distinct $\widehat{x},\widehat{y} \in \iota(S^2)$ as
\begin{equation} \label{eq:sphere_log}
\text{exp}^{-1}_{\widehat{x}}(\widehat{y}) = \cos^{-1}(\left\langle \widehat{x},\widehat{y}\right\rangle)\frac{\widehat{y} - \left\langle \widehat{x},\widehat{y}\right\rangle \widehat{x}}{\Vert \widehat{y} - \left\langle \widehat{x},\widehat{y}\right\rangle \widehat{x} \Vert_2} \in T_{\widehat{x}}\iota(S^2)
\end{equation}
with $\left\langle \widehat{x},\widehat{y} \right\rangle \defeq \widehat{x}^T\widehat{y}$ on $\reals^3$. Its magnitude is the geodesic distance $d(\widehat{x},\widehat{y}) = \cos^{-1}\langle \widehat{x},\widehat{y}\rangle$ (the arc-length angle, since $\Vert\widehat{x}\Vert_2=\Vert\widehat{y}\Vert_2=1$) and its direction is the normalized orthogonal projection at $\widehat{x}$, $\pi_{\widehat{x}}(\widehat{y}) \defeq (I_3 - \widehat{x}\widehat{x}^T)\widehat{y}$. This is the pointwise tangent projection of section~\ref{sec:AMG_field} and, at the center, it is the $\pi_0$ of Thm.~\ref{thm:normal_eigvals} conceived as an ambient representation of a tangent vector \cite{Absil2008,edelman1998geometry}. As throughout, these intrinsic objects are computed from extrinsic data.

In a \textit{data-driven} setting we are given samples $\lbrace \widehat{x}_i \rbrace_{i=1}^N \sim \mu$ of $\widehat{\mathcal{M}}$---random points on the $2$-sphere. The inverse exponential yields their Karcher mean \cite{fletcher2004principal,absil2004riemannian,srivastava2002monte}, which we take as the center $\widehat{x}_0$, so $\varphi^{-1}(\widehat{x}_0) = \boldsymbol{0}$ in normal coordinates---the intrinsic analog of centering Euclidean samples at the origin \cite{Jolliffe2002,Chatterjee2000} as in \eqref{eq:C_par_trans}. The resulting active manifold-geodesics \eqref{eq:AMG} are then defined relative to the tangent space at this Karcher mean, as required by Def. \ref{def:riemannian_view} and Algorithm \ref{alg:riemannian_MC_AS}.

Additionally, \cite{fletcher2004principal} discuss the projection to submanifolds so that we may construct analogous shadow plots. In our case, given $\mathcal{S}_{1,j} \defeq \lbrace \gamma(t) \in \mathcal{M} \, :\, \gamma(t) = \text{exp}_{p_0}(t\vv_j),\,\, t \in \mathcal{I}_j \subseteq \mathbb{R}\rbrace$, we define projection to the submanifold with fixed origin $p_0 = \text{exp}_{p_0}(\boldsymbol{0})$ as
\begin{equation} \label{eq:proj_nml_submanifold}
\pi_{\mathcal{S}_{1,j}}(p)\defeq \underset{t\in \mathcal{I}_j}{\text{argmin}}\,\,d^2(p,\text{exp}_{p_0}(t\vv_j))
\end{equation}
for all $p \in \mathcal{X}$ and $\mathcal{I}_j \subseteq \mathbb{R}$ containing zero. Additionally, note that the embedded coordinate defining the central tangent space as a $2$-dimensional affine space tangent to the sphere in $\mathbb{R}^3$ is defined at the point $\widehat{x}_0 \defeq \iota(\varphi(\boldsymbol{0})) \in \iota(S^2)$ representing our center in ambient coordinates.

\subsection{Ambient ridge function}
Suppose we have an ambient linear function $f(\widehat{x}) \defeq \va^T\widehat{x}$ for all $\widehat{x} \in \mathcal{X} \subset \iota(S^2) \subset \mathbb{R}^3$ and some $\va \in \mathbb{R}^3$ such that $\Vert \va\Vert_2 = 1$. Evidently the ambient gradient is a constant $\overline{\nabla} f = \va$. The ambient function is a ridge function \eqref{eq:ridge} along the direction $\va$ although this property is lost when restricting to the nonlinear surface of the sphere. Consequently, $G_0$ is rank-$2$ (this is consistent throughout each example) but still admits an \textit{ordering} of important directions per $\lambda_1 \geq \lambda_2 > \lambda_3 = 0$. Specifically, observing Figure \ref{fig:lin_AMG}, it is evident that this ambient linear function---when restricted to the sphere---changes more along a particular $1$-dimensional submanifold. 
\begin{figure*}[t] 
	\centering
	\subfloat[Data-driven extrinsic observations of a ridge function and corresponding tangent gradients restricted to the sphere.]{
		\includegraphics[width=0.45\textwidth]{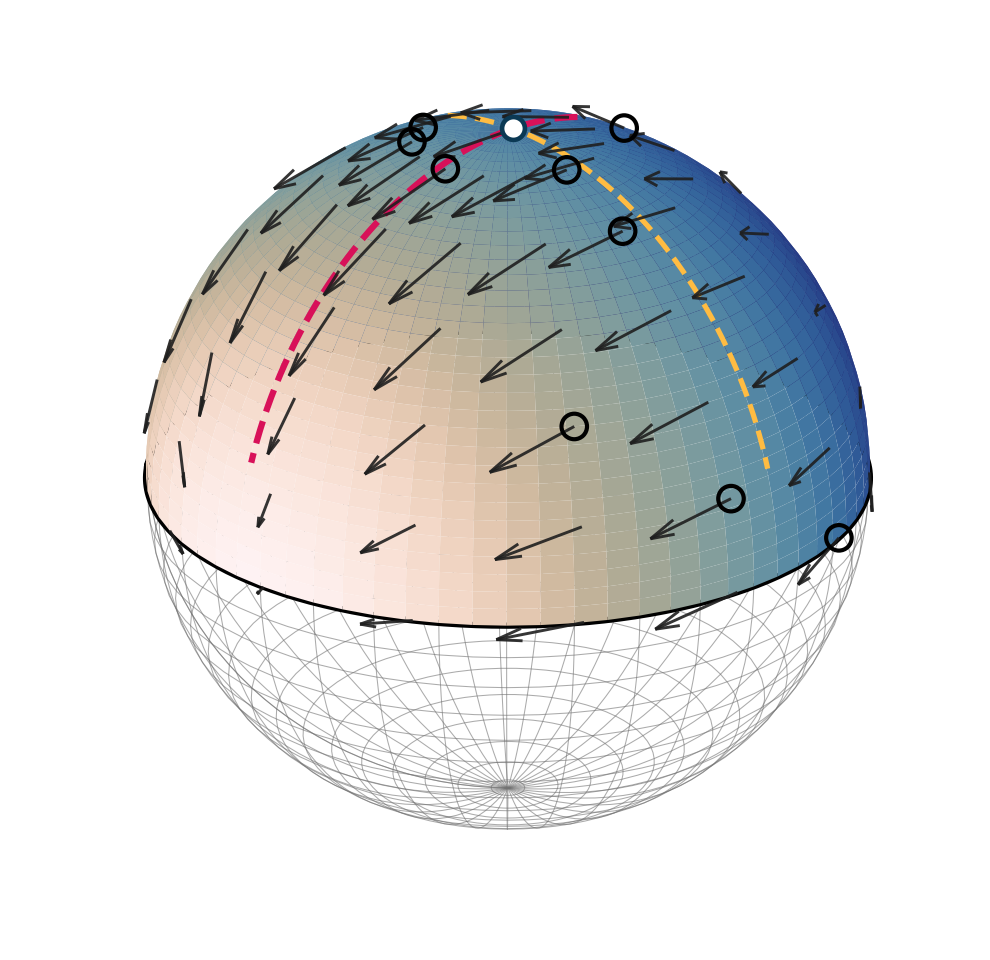}
	}
	\hfil
	\subfloat[Analogous shadow plot $\lbrace (\pi_{\tilde{\mathcal{W}}_1}(\widehat{x}_i), f(\widehat{x}_i))\rbrace$ over normal coordinates (colored scatter) and function responses over one dimensional submanifolds.]{
		\includegraphics[width=0.5\textwidth]{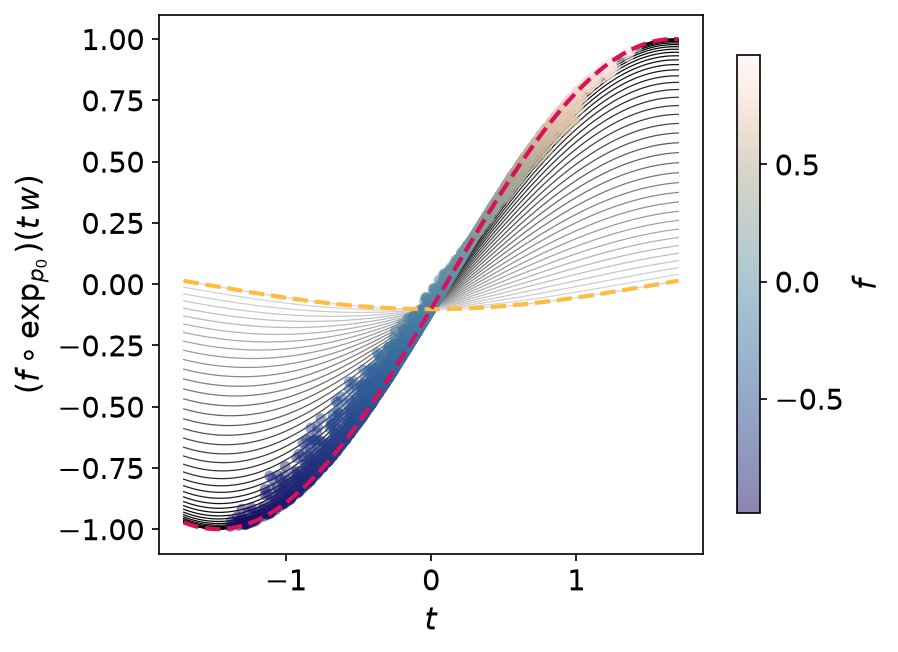}
	}
	
	\caption{AMG of an ambient linear function restricted to $\iota(S^2) \subset \mathbb{R}^3$. \textbf{Left (a)}: Ambient linear function over the sphere with a subset of random points and corresponding gradients. The AMG $\mathcal{W}_1$ (rose great circle) is shown; the projection to $T_{\widehat{x}_0}$ of the dominant eigenvector of \eqref{eq:emb_opg} (the extrinsic direction) coincides with the AMG here ($d\approx4\times10^{-4}$) and is therefore not drawn as a separate trace. \textbf{Right (b)}: Function evaluations restricted to a variety of submanifolds. The response over the AMG (rose) is shown against responses over geodesics defined by rotations of the dominant eigenvector in the tangent space (thin gray lines). Additionally, the function response over the AMG is visually consistent with the response resulting from the extrinsic perspective (coincident with the AMG). An analogous shadow plot utilizing \eqref{eq:proj_nml_submanifold} is shown over $\tilde{\mathcal{W}}_{1,1}$ with corresponding function evaluations (colored-dots).}
	\label{fig:lin_AMG}
\end{figure*}

\begin{figure*}[t]
	\centering
	\includegraphics[width=0.85\textwidth,trim={0 30pt 0 0},clip]{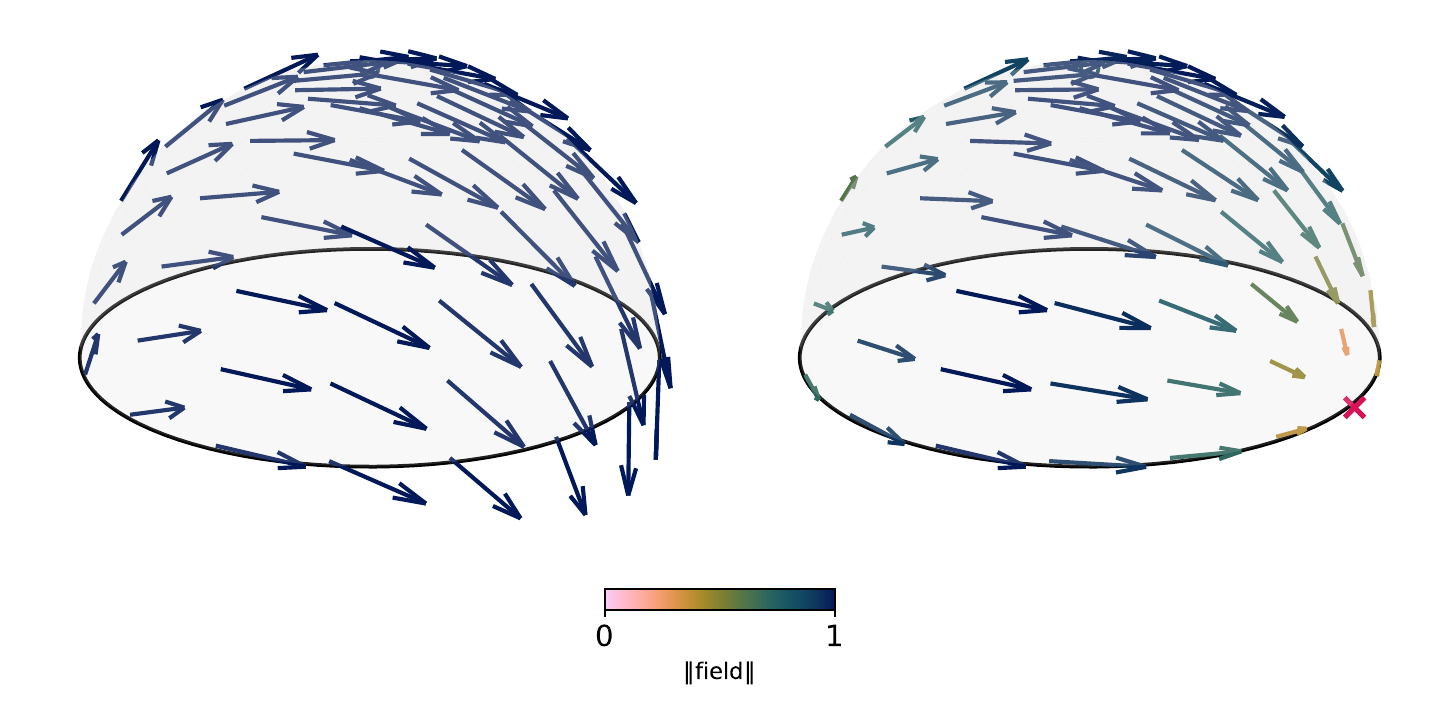}
	\caption{Transported versus projected extension of a dominant direction over a hemisphere of $\iota(S^2)$, colored by field magnitude. \textbf{Left}: the intrinsic frame field $\vw_1(p) = \mathcal{P}^{-1}_{p}[\vw_1]$ of \eqref{eq:AMG_field}---one central eigendecomposition propagated by the isometric transport---is unit-norm everywhere. \textbf{Right}: the \textit{projected-dominance} extension $\widehat{x} \mapsto \pi_{\widehat{x}}\,\widehat{\vb}$ of the fixed ambient direction $\widehat{\vb}$ degenerates: its magnitude is the cosine of the angle between $\widehat{\vb}$ and the tangent space, vanishing where $\widehat{\vb}$ is normal to the manifold (rose $\times$)---the pointwise collapse of the partial isometry (Thm.~\ref{thm:normal_eigvals}). The extrinsic ordering it carries need not survive and activity should never be extended this way (Remark~\ref{rem:noncommute}). The natural extension of any fixed frame is always that of parallel transport.}
	\label{fig:frame_field_compare}
\end{figure*}

The precise sense in which the ambient ridge survives---and is \textit{recovered}---in normal coordinates is Prop.~\ref{prop:local_agreement}. Figure~\ref{fig:amg_nml_coords} shows the ridge in normal coordinates over a geodesic ball, and Figures~\ref{fig:amg_ridge_recovery} and~\ref{fig:amg_ridge_recovery_nl} confirm the $\mathcal{O}(R^2)$ recovery numerically for linear and nonlinear ridges and a reduced rate against a non-ridge.
\begin{figure}[t]
	\centering
	\includegraphics[width=\columnwidth]{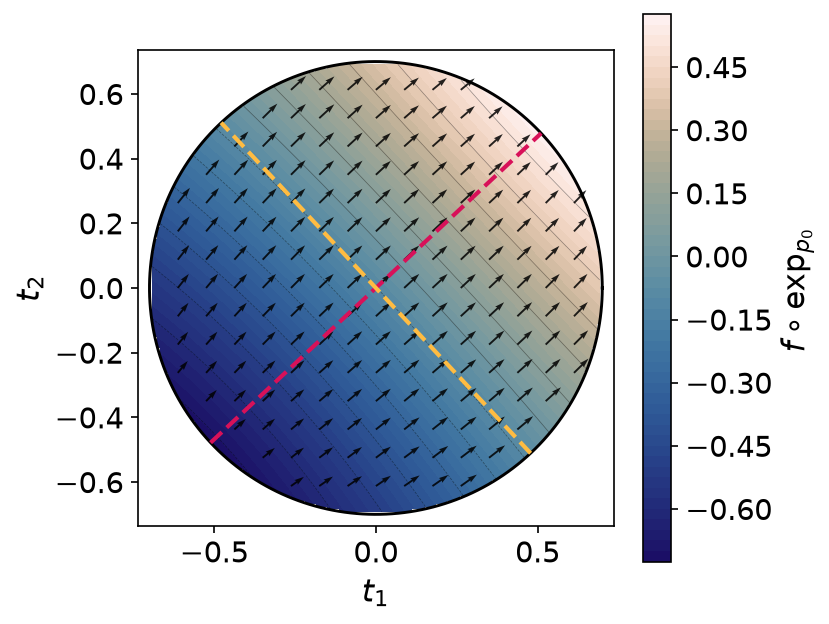}
	\caption{The ambient ridge $f=\va^{\top}\widehat{x}$ in normal coordinates over a geodesic ball at the Karcher mean $\widehat{x}_0$: color field $f\circ\text{exp}_{\widehat{x}_0}$, black arrows the tangential gradient parallel-transported to $\widehat{x}_0$ (the $G_0$ integrand). The intrinsic active axis $w_1$ (solid rose) aligns with steepest ascent and the inactive axis $w_2$ (dashed marigold) with the level set. The near-straight contours illustrate the $\mathcal{O}(R^2)$ ridge recovery of the second-order locality (Prop.~\ref{prop:local_agreement}).
    }
	\label{fig:amg_nml_coords}
\end{figure}
\begin{figure*}[t]
	\centering
	\includegraphics[width=0.85\textwidth]{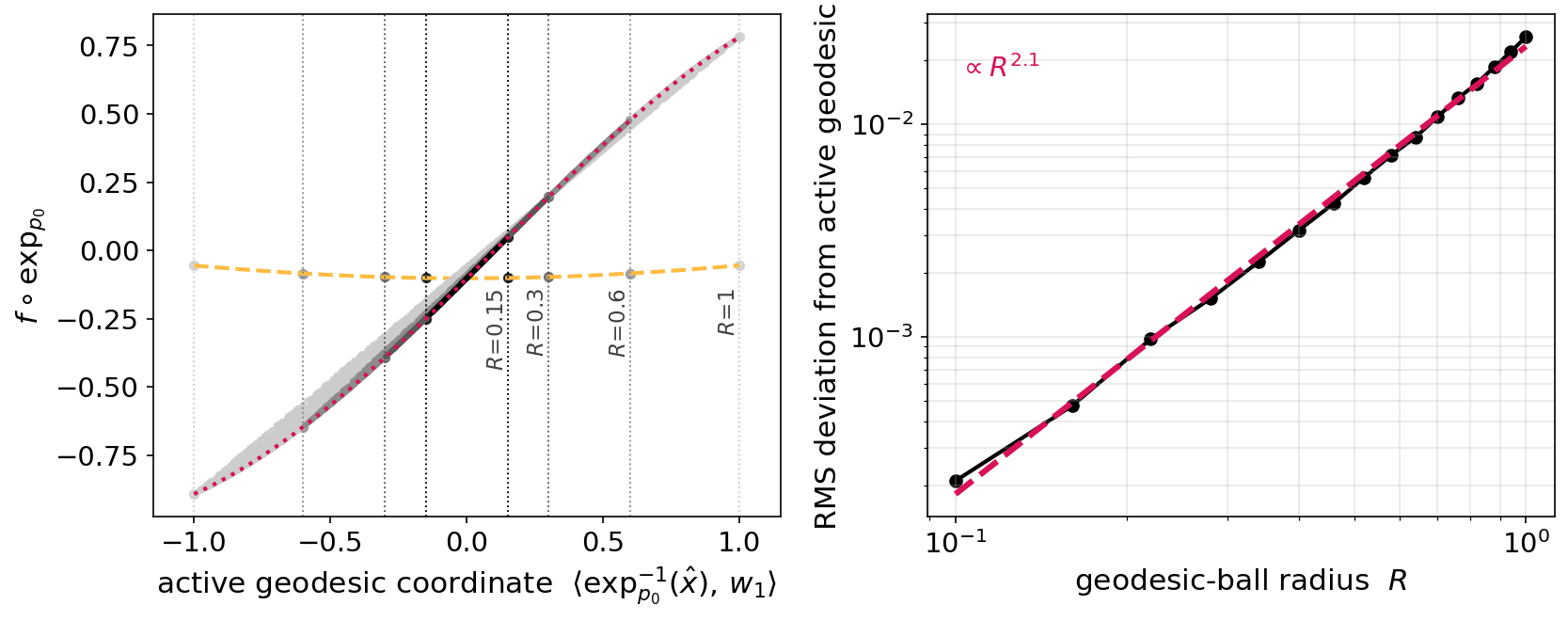}
    \includegraphics[width=0.85\textwidth]{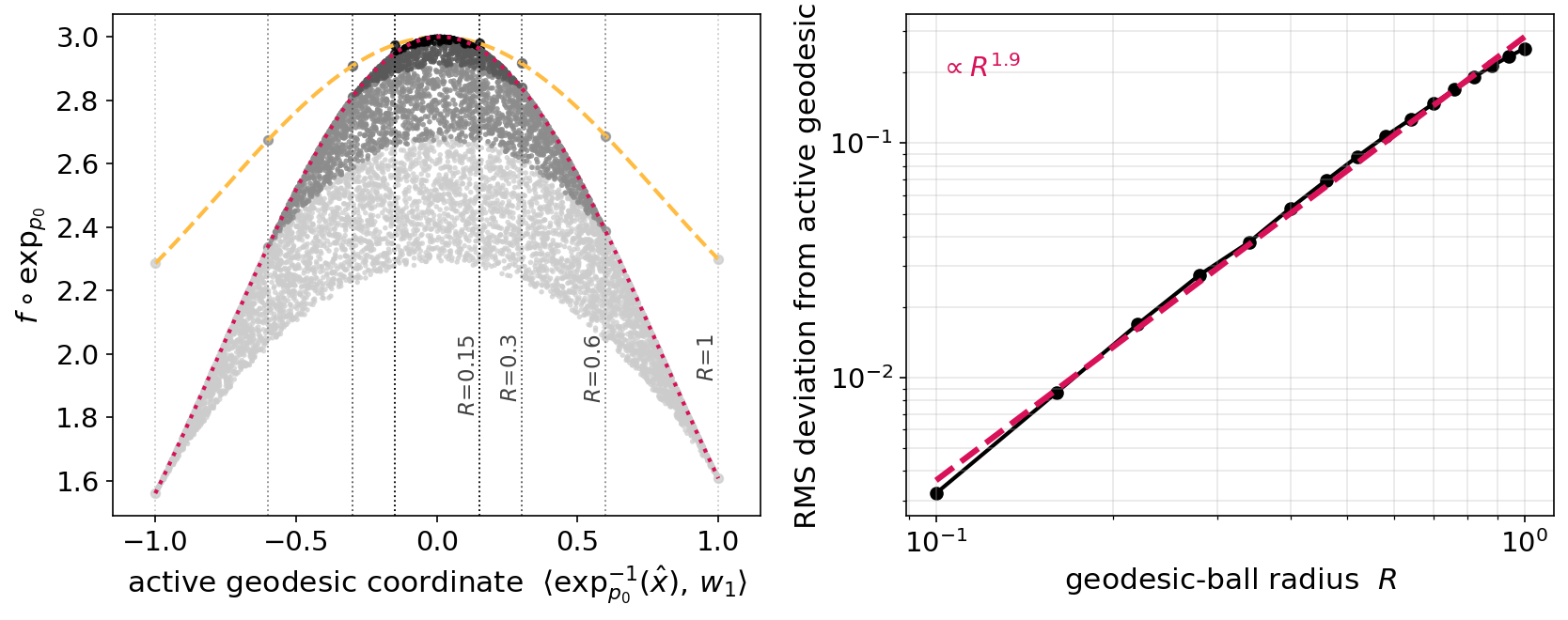}
	\caption{Numerical demonstration of $\mathcal{O}(R^2)$ ridge recovery over a \textit{shrinking} geodesic ball---the second-order locality of Prop.~\ref{prop:local_agreement}. \textbf{Top}: An ambient linear function. \textbf{Bottom}: An ambient quadratic with preferential directions (non-ridge). \textbf{Left}: $f\circ\text{exp}_{\widehat{x}_0}$ against the active geodesic coordinate $s_1=\langle\text{exp}^{-1}_{\widehat{x}_0}(\widehat{x}),\,w_1\rangle$, with the sample scatter in nested shells for decreasing ball radius $R$ (lightening with $R$; dotted lines mark each ball's extent $\pm R$). As $R\to 0$ the extent contracts and the scatter collapses onto the $1$-dimensional active-geodesic sweep (rose). \textbf{Right}: the RMS deviation of the samples from that active-geodesic sweep as a function of $R$, decaying as $\mathcal{O}(R^2)$---the curvature-limited recovery rate (dashed guide).}
	\label{fig:amg_ridge_recovery}
\end{figure*}
\begin{figure*}[t]
	\centering
    \includegraphics[width=0.825\textwidth]{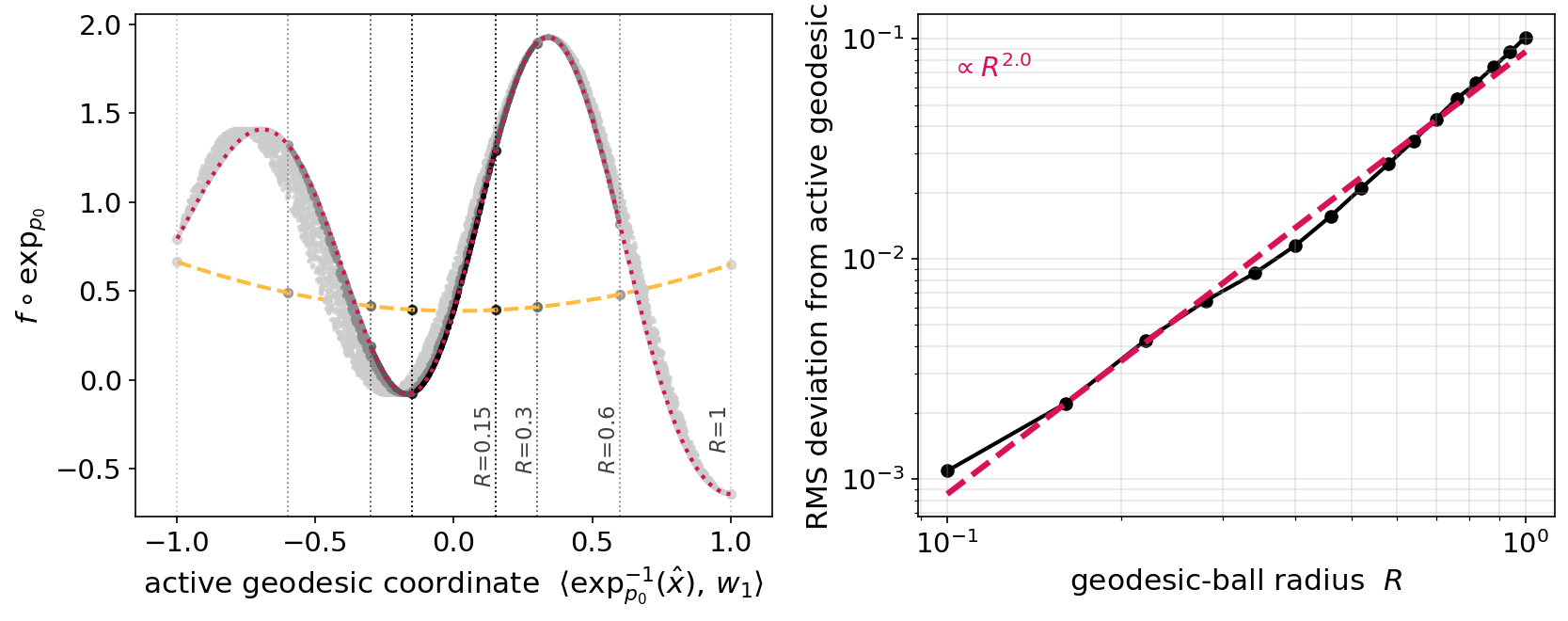}
    \includegraphics[width=0.825\textwidth]{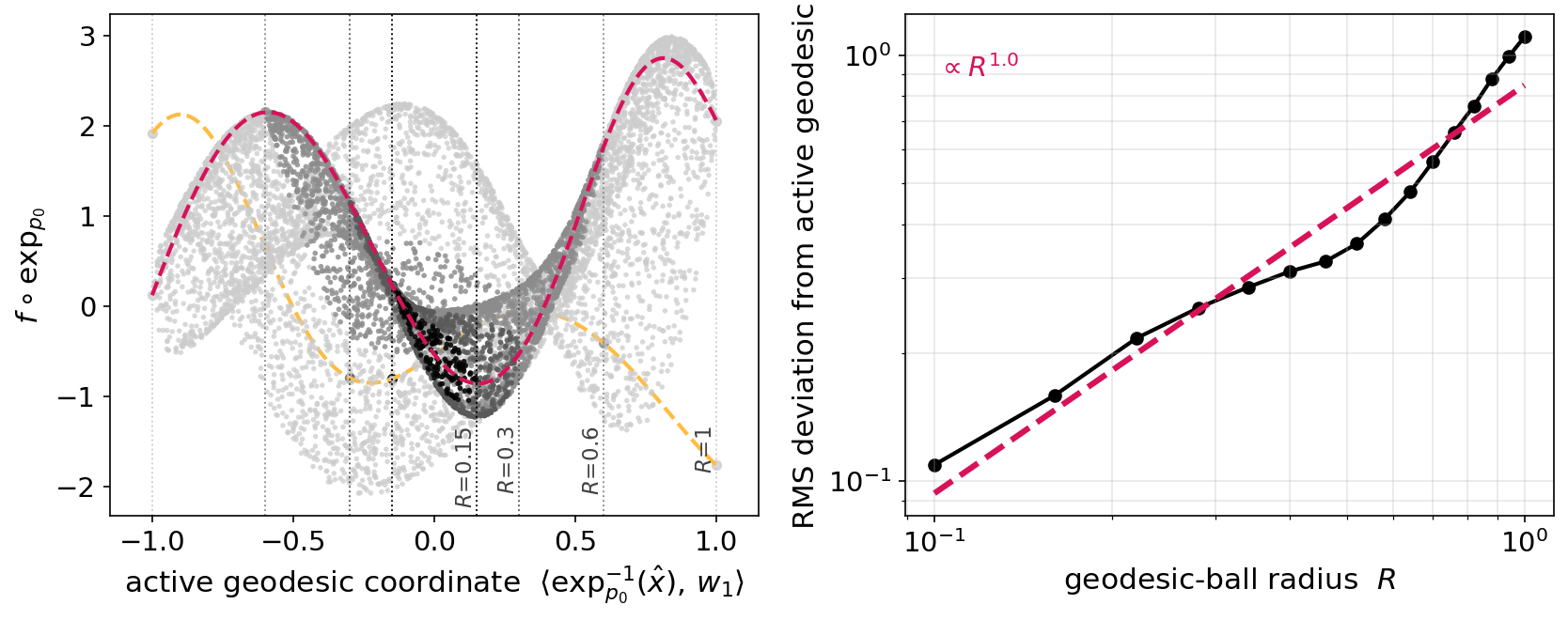}
	\caption{Numerical demonstration of $\mathcal{O}(R^2)$ ridge recovery over a \textit{shrinking} geodesic ball---the second-order locality of Prop.~\ref{prop:local_agreement}. \textbf{Top}: A nonlinear ridge. \textbf{Bottom}: A nonlinear non-ridge. \textbf{Left}: $f\circ\text{exp}_{\widehat{x}_0}$ against the active geodesic coordinate $s_1=\langle\text{exp}^{-1}_{\widehat{x}_0}(\widehat{x}),\,w_1\rangle$, with the sample scatter in nested shells for decreasing ball radius $R$ (lightening with $R$; dotted lines mark each ball's extent $\pm R$). As $R\to 0$ the extent contracts and the scatter collapses onto the $1$-dimensional active-geodesic sweep (rose). \textbf{Right}: the RMS deviation of the samples from that active-geodesic sweep as a function of $R$, decaying as $\mathcal{O}(R^2)$---the curvature-limited recovery rate (dashed guide). A genuine $2$-dimensional (non-ridge) function instead retains a vertical spread, and its RMS does not match the expected rate absent the assumed spectral gap.}
	\label{fig:amg_ridge_recovery_nl}
\end{figure*}
Consistent with the data-driven setting, assume we are given triples $\lbrace (\widehat{x}_i,\nabla_{\mathcal{M}}\widehat{f}(\widehat{x}_i), f(\widehat{x}_i))\rbrace_{i=1}^N$ as random samples from unknown $\mu$ supported over the upper hemisphere $\mathcal{X}\subset \iota(S^2)$. We proceed by centering normal coordinates on the Karcher mean of $\lbrace \widehat{x}_i \rbrace$ and approximate eigenspaces of $G_0$ using $\lbrace \nabla_{\mathcal{M}}\widehat{f}(\widehat{x}_i) \rbrace$ in Algorithm \ref{alg:riemannian_MC_AS} (omitting step one since samples are provided). In this context, parallel transport is computed over the upper hemisphere using the form presented in \cite{edelman1998geometry} for the $2$-dimensional Grassmannian, $\text{G}_{3,1}$\footnote{The upper hemisphere of $S^2$ is locally isometric to $\text{Gr}(1,3)=\mathbb{RP}^2$, its double cover; the Grassmann formulae apply because the hemisphere contains no antipodal pairs.}. The approximated dominant eigenvector $\tilde{\vw}_1$ is computed as the first column of $\tilde{W}_0 \in O(3)$ ordered according to decaying eigenvalues.\footnote{The third eigenvalue is order machine precision given an intrinsic dimension of $n=2$---i.e., each parallel transported extrinsic vector is an element of a $2$-dimensional affine space admitting, at most, two important directions.} The direction $\tilde{\vw}_1$ is used to define a $1$-dimensional submanifold as the active manifold-geodesic $\tilde{\mathcal{W}}_{1,1}$ per \eqref{eq:AMG} such that $\mathcal{V}$ is diffeomorphic to the upper hemisphere. An example of the computations involving $N=1000$ random samples of the upper hemisphere and Karcher mean $\widehat{x}_0 \defeq (0,0,1)^T$ are depicted in Figure \ref{fig:lin_AMG}---for visual acuity, extrinsic visualizations correspond to a subset of the total random samples.

Examining Figure \ref{fig:lin_AMG}(a), we visualize the active manifold-geodesic (rose) and notice $\tilde{\mathcal{W}}_{1,1}$, according to $\tilde{\vw}_1$ such that $\tilde{\lambda}_1 > \tilde{\lambda}_2$, maximizes the function's range restricted to the $1$-dimensional submanifold. In contrast, the transverse submanifold $\tilde{\mathcal{W}}_{1,2}$ (marigold) defined using $\tilde{\vw}_2$ with smaller paired eigenvalue minimizes the function's range restricted to a $1$-dimensional submanifold emanating from the same central tangent space. This is further emphasized by examining Figure \ref{fig:lin_AMG}(b) which depicts function evaluations in normal coordinates restricted to the corresponding color and line-style submanifolds in Figure \ref{fig:lin_AMG}(a). 

The intermediate-range function responses (thin gray lines) in Figure \ref{fig:lin_AMG}(b) correspond to evaluations restricted to alternative submanifolds obtained by rotating $\tilde{\vw}_1$ in the central tangent space. The function response over each alternative submanifold is bounded above by the range over $\tilde{\mathcal{W}}_{1,1}$ and below by that over $\tilde{\mathcal{W}}_{1,2}$. The responses over alternative submanifolds in Figure \ref{fig:lin_AMG}(b) are also shown with an analogous \textit{shadow plot} of pairs $\lbrace (\pi_{\tilde{\mathcal{W}}_{1,1}}(\widehat{x}_i), f(\widehat{x}_i))\rbrace$ (colored scatter plot).

Lastly, in this case, the orthogonal projection $\pi_0$ of the approximated dominant eigenspace of \eqref{eq:emb_opg} using Algorithm 3.1 of \cite{Constantine2015} also admits a submanifold which is coincident with the active manifold-geodesic (subspace distance $\approx4\times10^{-4}$) and therefore not drawn separately---the generic regime (i)--(iii) of Remark~\ref{rem:equivalence}. The intrinsic and extrinsic perspectives differ fundamentally by definition of their generating eigenspaces \eqref{eq:riemannian_view} and \eqref{eq:emb_opg}, respectively. However, in this example involving a known ambient ridge function and a specific central tangent space, we observe the consistency quantified by Prop.~\ref{prop:local_agreement}.
\begin{figure}[t] 
	\centering
	\includegraphics[width=0.45\textwidth]{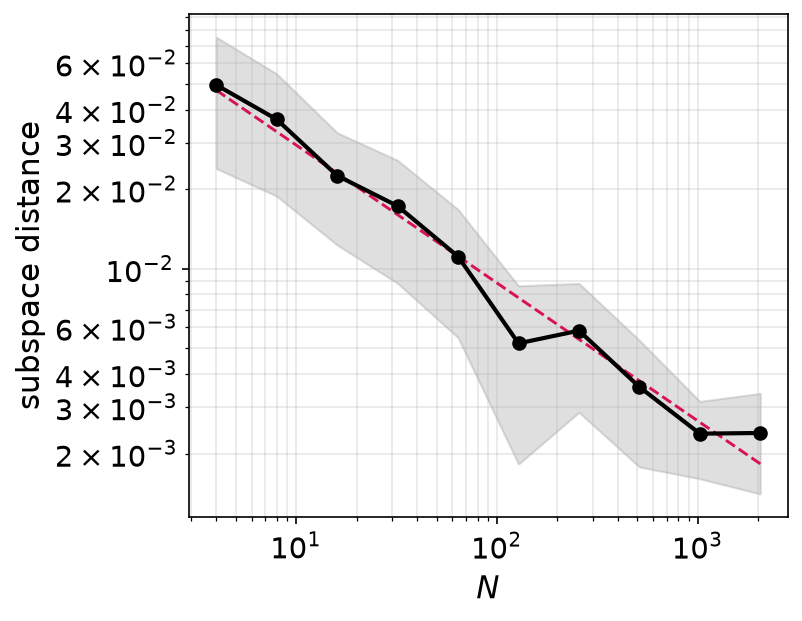}
	\caption{Convergence of approximated dominant $G_0$ eigenspace using Algorithm \ref{alg:riemannian_MC_AS} to the orthogonal projection of the known ambient ridge direction, $\pi_0(\va)$. Depicted is the average error-metric over $500$ re-sampled tangent gradients for each of the $N$ total samples used in Algorithm \ref{alg:riemannian_MC_AS} (black circle-line). The gray filled intervals represent $3$ and $6$ times the standard error plus and minus the average error-metric at the corresponding $N$ samples.}
	\label{fig:AMG_sphere_conv}
\end{figure}

To study this apparent consistency numerically, consider an error metric as the \textit{subspace distance}  \cite{Golub1996} between the dominant eigenspace obtained from Algorithm \ref{alg:riemannian_MC_AS} and the orthogonal projection of the known ambient ridge direction, $W_1 \defeq \pi_0(\va)$, as
$
\text{err}_{N} \defeq\Vert \tilde{W}_1\tilde{W}^T_1 - W_1W_1^T \Vert_2.
$
The matrices $\tilde{W}_1$ and $W_1$ represent the separate bases in $T_{\widehat{x}_0}\iota(S^2)$ defining the one-dimensional submanifolds for the true submanifold $\mathcal{W}_1$ and approximate submanifold $\tilde{\mathcal{W}}_1$. Figure \ref{fig:AMG_sphere_conv} depicts the convergence rate for increasing $N$ random gradients over the upper hemisphere. This process is then repeated $500$ times with a new random seed to offer evidence of stability in the computation. The approximation of the dominant eigenspace of \eqref{eq:emb_opg} using Monte Carlo converges to $\va$ as $\mathcal{O}(N^{-1/2})$ and the dominant eigenspace of \eqref{eq:riemannian_view} approximated by Algorithm \ref{alg:riemannian_MC_AS} converges to $\pi_0(\va)$ at the same rate; this subspace rate presumes a nonzero eigenvalue gap (Davis--Kahan \cite{Golub1996}), which closes precisely in the degenerate case $\widehat{x}_0=\pm\va$ examined next. 

Evidently, we observe consistency in the numerical experiment given the two fundamentally different definitions of important eigenspaces of \eqref{eq:riemannian_view} and \eqref{eq:emb_opg}. However, let's study what happens when we choose $\widehat{x}_0 = \pm \va$---more generally $\widehat{x}_0 \in \text{Range}(\overline{\nabla}f) \cap \iota(S^2)$. Analogously, rotate the random subset $\lbrace \widehat{x_i} \rbrace_{i=1} ^N$ so that the central tangent space has representative Karcher mean at one of the two $\widehat{x}_0 = \pm \va$ and re-evaluate the function depicted in Figure \ref{fig:lin_AMG} at the new points. The result is depicted in Figure \ref{fig:lin_AMG_noproj}.

\begin{figure*}[t] 
	\centering
	\subfloat[Data-driven extrinsic observations of a ridge function and corresponding tangent gradients restricted to the sphere.]{
		\includegraphics[width=0.45\textwidth]{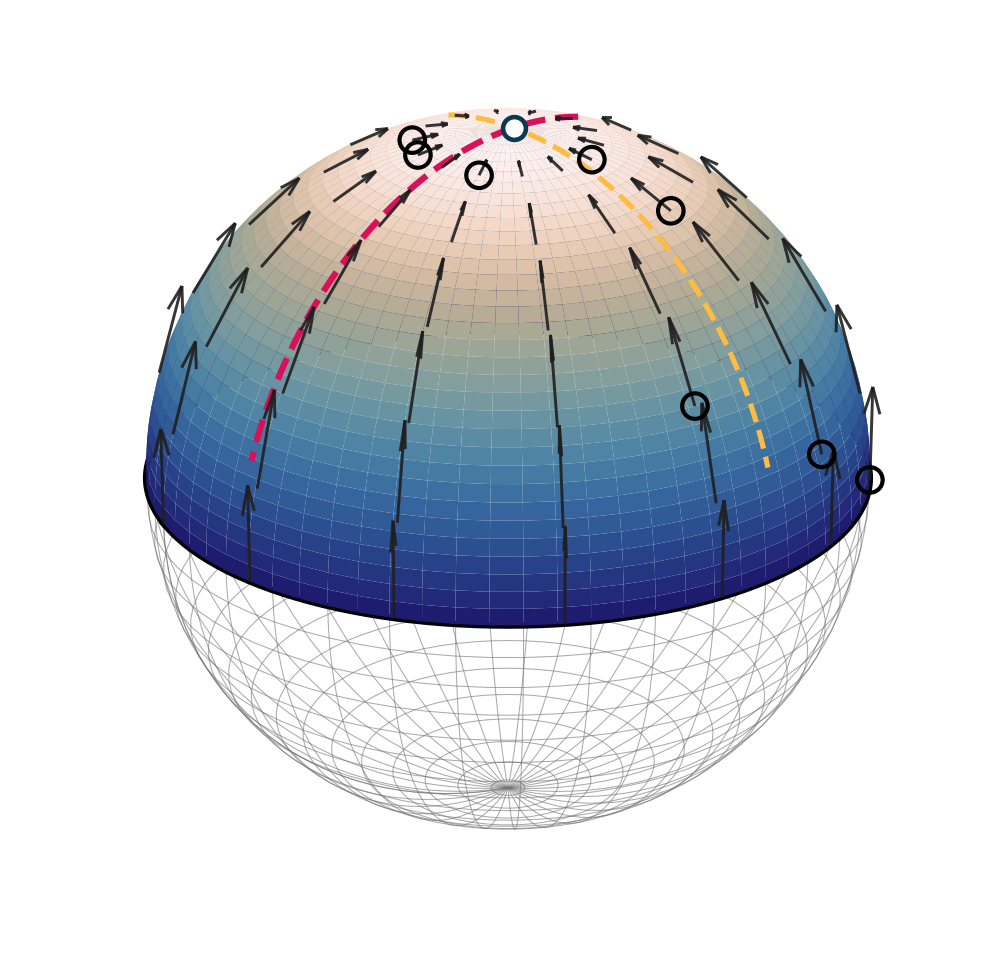}
	}
	\hfil
	\subfloat[Analogous shadow plot $\lbrace (\pi_{\tilde{\mathcal{W}}_1}(\widehat{x}_i), f(\widehat{x}_i))\rbrace$ over normal coordinates (colored scatter) and function responses over one dimensional submanifolds.]{
		\includegraphics[width=0.5\textwidth]{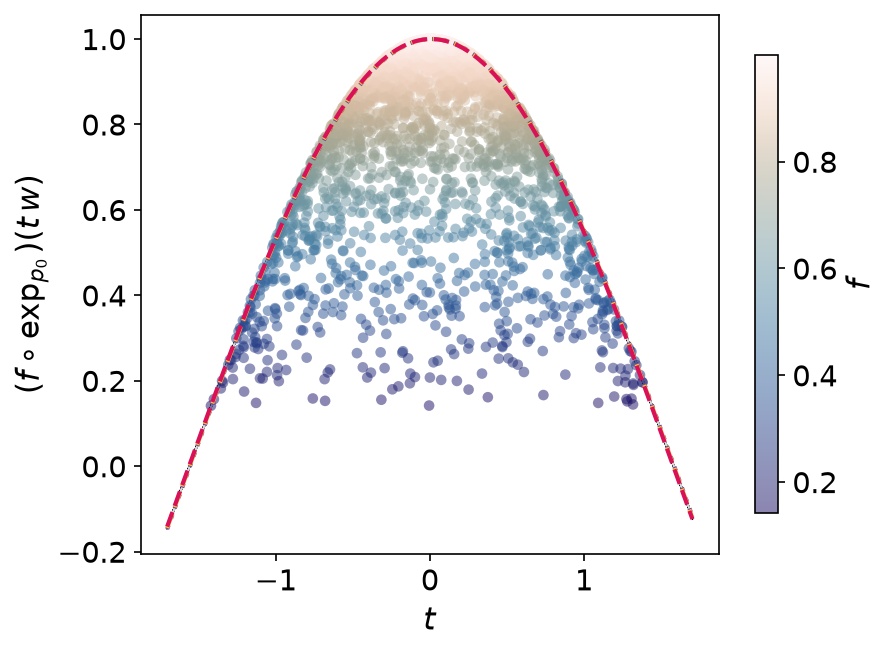}
	}
	
	\caption{Active manifold-geodesics of an ambient linear function restricted to $\iota(S^2) \subset \mathbb{R}^3$. \textbf{Left (a)}: Ambient linear function over the sphere with a subset of random points and corresponding gradients. The AMG (rose great circle) is shown with the transverse (inactive) direction in the central tangent space (marigold great circle). Here the \textit{projected-ambient-eigenvector} estimate from \eqref{eq:emb_opg} vanishes: at the aligned center $\widehat{x}_0=\pm\va$ the dominant ambient eigenvector of $C_\iota$ is normal to $T_{\widehat{x}_0}\iota(S^2)$, so its projection is annihilated---regime (iii) of Remark~\ref{rem:equivalence}. The central representation $E_0^{\top}C_{\iota}E_0$ itself remains isotropic, consistent with the two equal eigenvalues of $G_0$ (regime (i) holds; the spectral gap of regime (ii) is zero for both). \textbf{Right (b)}: Function evaluations restricted to two submanifolds. The response over the AMG (rose) is shown against the response over the orthogonal direction in the central tangent space (marigold). An analogous shadow plot according to \eqref{eq:proj_nml_submanifold} is shown over $\tilde{\mathcal{W}}_{1,1}$ with corresponding function evaluations (colored-dots).}
	\label{fig:lin_AMG_noproj}
\end{figure*}

In Figure \ref{fig:lin_AMG_noproj}, the projected dominant eigenspace of \eqref{eq:emb_opg} converges to the trivial tangent vector, $\lim_{N\rightarrow \infty}\pi_0(\tilde{\va}_N) = \boldsymbol{0} \in T_{\widehat{x}_0}\iota(S^2)$, for the Monte Carlo estimate $\tilde{\va}_N$. If the samples of the gradient are not appropriately projected to local tangent spaces $T_{\widehat{x}}\iota(S^2)$, we obtain a single (dominant) nonzero eigenvalue per Prop. \ref{prop:eigs_and_constant_f} and a meaningless interpretation of ridge structure which is annihilated by projected-dominance. In contrast, the approximated eigendecomposition of \eqref{eq:riemannian_view} using Algorithm \ref{alg:riemannian_MC_AS} only admits two non-zero approximately equal magnitude eigenvalues indicating no preferential direction over the $2$-dimensional tangent space of the manifold. The intrinsic result suggests the function does not change more along either submanifold given by the approximated eigenspaces of \eqref{eq:riemannian_view} \textit{without subsequent interpretation involving projections}. In fact, the function response is equal over each one dimensional submanifold defined by arbitrary $\widehat{\vv} \in T_{\widehat{x}_0}\iota(S^2)$. Figure \ref{fig:lin_AMG_noproj}(b) shows the overlapping function responses along $\tilde{\mathcal{W}}_{1,1}$ and transverse $\tilde{\mathcal{W}}_{1,2}$ defined using orthonormal $\tilde{\vw}_1, \tilde{\vw}_2 \in T_{\widehat{x}_0}\iota(S^2)$ resulting from Algorithm \ref{alg:riemannian_MC_AS}. 

\begin{remark}
	The numerical examples depicted in Figure \ref{fig:lin_AMG} and Figure \ref{fig:lin_AMG_noproj}, utilizing the same ambient ridge structure, emphasize that this generalized perspective of low-dimensional structure depends on the choice of central tangent space---made explicit by Thm. \ref{thm:normal_eigvals} and Def. \ref{def:riemannian_view}.
\end{remark}
Figure \ref{fig:lin_AMG_noproj} emphasizes the failure mode of projected dominance. An approximation of \eqref{eq:emb_opg} projected to the tangent space may only result in small cancellations as opposed to a vanishing direction. This raises the question addressed in Remark~\ref{rem:noncommute}: \textit{if the projection of dominant directions from an approximation of \eqref{eq:emb_opg} to the central tangent space are non-zero but small, (in very high dimensions) how should we re-weight our interpretation of importance inferred by an ordering of eigenvalues?}

Theorem~\ref{thm:normal_eigvals} supplies the principled re-weighting: eigendecompose the central representation $E_0^{\top}C_{\iota}E_0$---project \textit{then} order---rather than projecting the leading ambient eigenvector. Here that representation is isotropic, matching the two equal intrinsic eigenvalues, and in general it remains $\mathcal{O}(R^2)$-consistent with $G_0$ (Prop.~\ref{prop:local_agreement}); it is the ambient eigen-ordering, which weighs components the projection annihilates, that cannot be repaired by any re-weighting after the fact. Alternatively, Algorithm \ref{alg:riemannian_MC_AS} only produced two intrinsic directions paired with two approximately equal eigenvalues indicating the anticipated interpretation of equal importance of either direction. 

Precisely, given fixed $E_0$, the central projection \textit{commutes with the average}---the first identity of Thm.~\ref{thm:normal_eigvals}---so projecting each tangential gradient and then averaging yields the same matrix as averaging and then compressing to the central tangent space. What does \textit{not} commute with projection is the eigendecomposition, and with it the eigen-\textit{ordering} (Remark~\ref{rem:noncommute}): \cite{wu2010learning} order in the ambient space and then projecting the dominant eigenvector, whereas Thm.~\ref{thm:normal_eigvals} orders the centrally-projected representation. The improvement is ordering \textit{after} projection, (dominant projections) which is immune to ambient dominance carried by components the projection annihilates.

\subsection{Ambient non-ridge function}
Building on the distinction between the two eigenspaces, consider the case that some ambient function is not a ridge function but retains an ordering of directional derivatives. For example, an ambient quadratic $f(\widehat{x}) \defeq \widehat{x}^TA\widehat{x}$ for all $\widehat{x} \in \iota(S^2) \subset \mathbb{R}^3$ such that $A \defeq \text{diag}(3,2,1) \in \mathbb{R}^{3 \times 3}$. This quadratic admits an ordering of directional derivatives but it is not an ambient ridge function \eqref{eq:ridge}. Figure \ref{fig:quad_AMG} depicts the resulting computations using the previously defined conditions, $N = 1000$ and $\widehat{x}_0 \defeq (0,0,1)^T$.
\begin{figure*}[t] 
	\centering
	\subfloat[Data-driven extrinsic observations of a ridge function and corresponding tangent gradients restricted to the sphere.]{
		\includegraphics[width=0.45\textwidth]{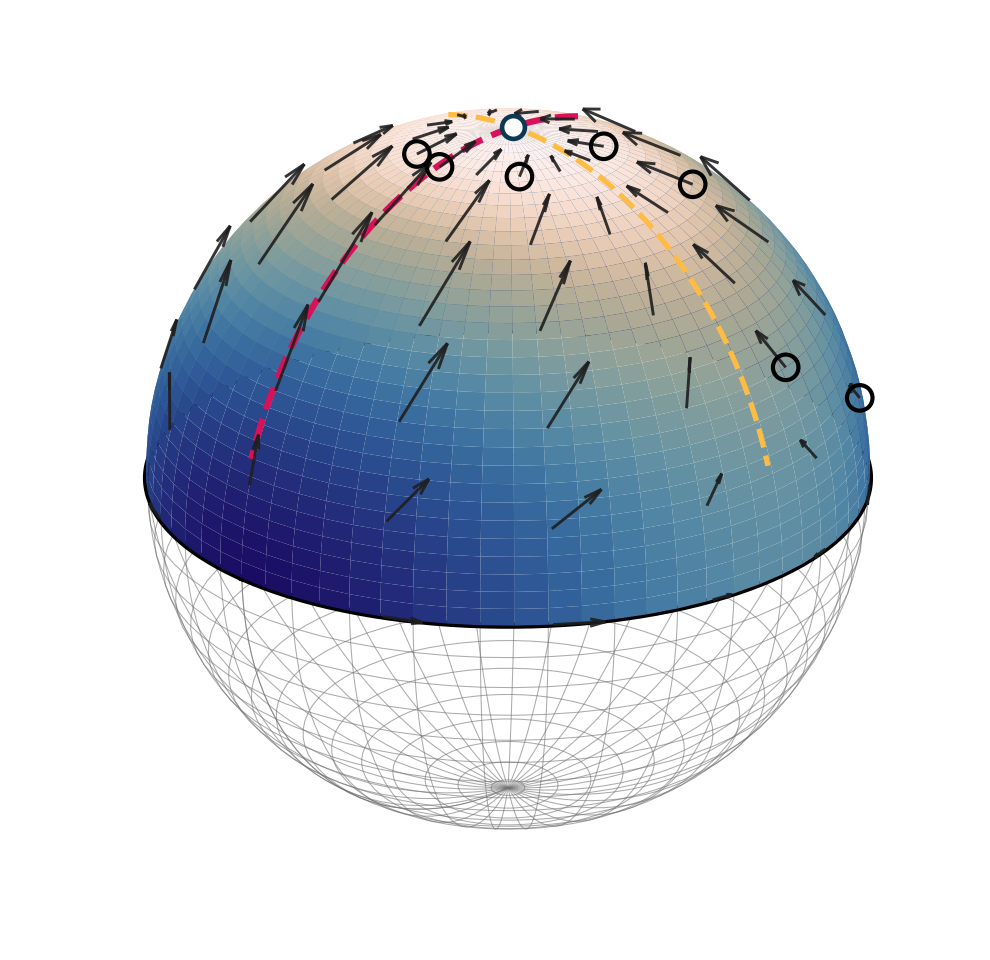}
	}
	\hfil
	\subfloat[Analogous shadow plot $\lbrace (\pi_{\tilde{\mathcal{W}}_1}(\widehat{x}_i), f(\widehat{x}_i))\rbrace$ over normal coordinates (colored scatter) and function responses over one dimensional submanifolds.]{
		\includegraphics[width=0.5\textwidth]{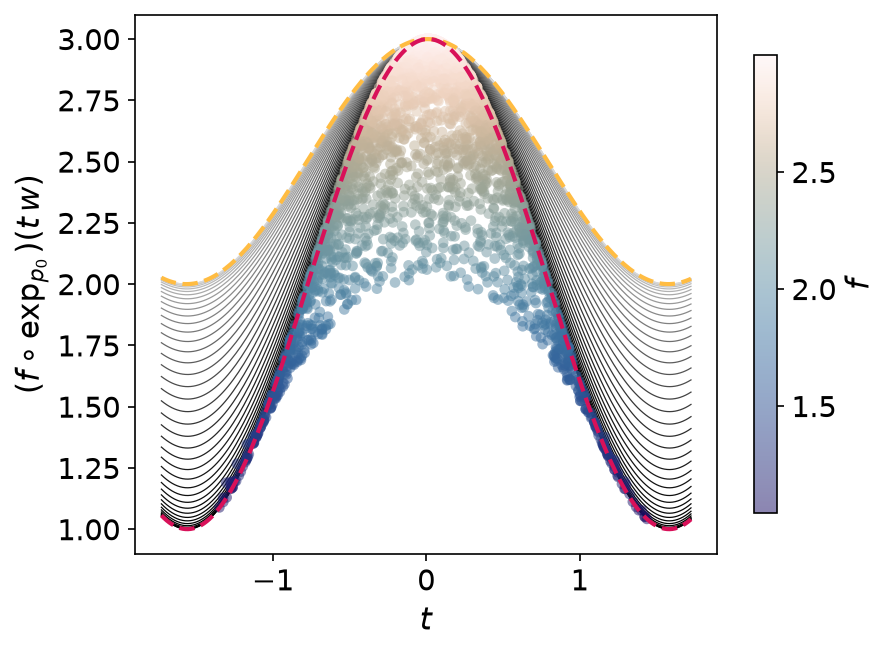}
	}
	
	\caption{Active manifold-geodesics of an ambient quadratic function with ordered directional derivatives on $\iota(S^2) \subset \mathbb{R}^3$. \textbf{Left (a)}: Ambient quadratic function over the sphere with subset of random points and corresponding gradients. The active manifold-geodesic (rose line) is shown; under the extrinsic construction the extrinsic direction from \eqref{eq:emb_opg} coincides with the AMG here ($d\approx5\times10^{-3}$). \textbf{Right (b)}: Function evaluations restricted to a variety of submanifolds. The response over the active manifold-geodesic (rose) is shown against responses over a subset of other possible rotations in the $2$-dimensional tangent space. The extrinsic response coincides with the AMG. An analogous shadow plot according to \eqref{eq:proj_nml_submanifold} is shown over $\tilde{\mathcal{W}}_{1,1}$ with corresponding function evaluations (colored-dots).}
	\label{fig:quad_AMG}
\end{figure*}

For this quadratic the dominant dimension-reduction direction is genuinely tangential at $\widehat{x}_0$, so the extrinsic estimate---the projection to $T_{\widehat{x}_0}\iota(S^2)$ of the dominant eigenvector of \eqref{eq:emb_opg}---survives that projection ($\Vert\pi_0\widehat{\vb}\Vert\approx0.99$) and \textit{coincides} with the intrinsic AMG of Algorithm~\ref{alg:riemannian_MC_AS}, exactly as in the ridge case of Figure~\ref{fig:lin_AMG}. Both eigenspaces capture the direction of maximum range of the function restricted to a one-dimensional submanifold, and the ordering of directional derivatives is recovered identically by the two perspectives. This is the generic situation of Remark~\ref{rem:equivalence}: all three regimes hold---the central representations agree to $\mathcal{O}(R^2)$, the spectral gap is well separated from that discrepancy, and the dominant ambient eigenvector survives the projection.

The coincidence observed with the preferential quadratic directions is also robust. Scanning many central points shows that this quadratic's ambient ordering is overturned only where its two leading ambient eigenvalues are nearly degenerate---exactly where a \textit{dominant} direction ceases to be well-defined---so the ordered ambient quadratic cannot honestly exhibit the reordering of Remark~\ref{rem:noncommute}. For the nonlinear non-ridge of Appendix~\ref{app:gallery} at sufficiently large support, there are central points where the $\cos^2\theta_i$ reweighting overturns a robust ambient dominance restricted to the sphere, both the ambient and central spectra remaining well separated. The two candidate directions of Remark~\ref{rem:noncommute}, projected dominance and dominant projection, then depart by a visible angle.

The nonlinear non-ridge example deserves emphasis, because it is where the choice of perspective genuinely matters. No ridge hypothesis enters Lemma~\ref{lem:exact_eigvals}: for any sufficiently smooth $f$, the intrinsic eigenvalues are exact mean-squared directional derivatives along the unit-norm transported fields \eqref{eq:AMG_field}. The eigen-ordering therefore remains an honest, variance-ordered ranking of directions even when there is no low-dimensional structure to certify. Without vanishing trailing eigenvalues, only the certification of ridge structure \eqref{eq:err_apprx} is lost but ordering is retained. The extrinsic representation orders exactly what Thm.~\ref{thm:normal_eigvals} says it orders, but that is a fundamentally different quantity. For a central direction $\widehat{\vw} = E_0\vw$, the quadratic form $\vw^{\top}(E_0^{\top}C_{\iota}E_0)\vw = \int_{\mathcal{X}} \left(df_p[\pi_{\widehat{x}}\,\widehat{\vw}]\right)^2 d\mu(p)$ is the mean-squared derivative along the \textit{projected extension} of $\widehat{\vw}$, the field of section~\ref{sec:AMG_field} whose magnitude decays with the angle between $\widehat{\vw}$ and the tangent space. Extrinsic eigenvalues thus entangle the variation of $f$ with the tilt of the embedding, measuring it along a comparison field that geometry alters in both direction and magnitude, while projected dominance can conflate ordering outright (Remark~\ref{rem:noncommute}).

Absent certifiable ridge structure, i.e., the more general ridge approximation setting, the intrinsic perspective is the more precise analog for identifying important directions: \textit{$G_0$ ranks directions by the variation of the function alone, along unit-norm fields, whereas either extrinsic reading weighs that variation by the geometry of a chosen embedding}. Identifying a single \textit{dominant} intrinsic direction still requires the spectral gap of regime (ii) in Remark~\ref{rem:equivalence}, and inactivity by virtue of small eigenvalues remains geodesically local, a consequence of the restrictions that make the integrand unique.

Figure~\ref{fig:amg_panel} in Appendix~\ref{app:gallery} gathers all of these toy examples---the ridge, degenerate, and quadratic cases above together with nonlinear ridge and non-ridge functions---across the normal-coordinate, sphere, and shadow views, so that the presence or absence of low-dimensional structure can be read down each column.


\section{Conclusions}
Both intrinsic and extrinsic eigenspaces generalize Euclidean active subspaces. The extrinsic formulation avoids parallel-transport approximations, and Remark~\ref{rem:noncommute} organizes what remains. Activity is first identified by integration, where the dominant projection agrees with the intrinsic construction quadratically, and is then extended over the manifold, either by recomputing a decomposition on each tangent space or by parallel transport of a single central frame. One route should be avoided throughout: projected dominance is not dominant projection, and activity should never be extended by projecting the dominant ambient eigenvector. The intrinsic view also represents the geometry more faithfully, free of a chosen embedding and of its projection to the central tangent space.

The relationship between the two intrinsic and extrinsic perspectives is now quantitative, and they are \textit{not} identical: on the central tangent space the eigenvalues agree to second order in the geodesic radius of the sampled domain, and under a spectral gap $\eta$ the dominant eigenspaces agree to $\mathcal{O}(R^2/\eta)$---exactly characterized on hyperspheres by the projection--transport identity (Lemma~\ref{lem:sphere_proj_trans}) and in general by Prop.~\ref{prop:local_agreement}---while the eigenvalues of the intrinsic $G_0$ retain an exact mean-squared-differential interpretation along the transported frame field (Lemma~\ref{lem:exact_eigvals}) at any radius. What can fail is the extrinsic practice of projecting the dominant \textit{ambient} eigenvector of $C_\iota$ (Thm.~\ref{thm:normal_eigvals}, Remark~\ref{rem:equivalence}), wherever that direction lies in $\text{Null}(\pi_0)$---a measure-zero set for the $2$-sphere examples, though not necessarily so in general. Since the eigenspaces are fundamentally different and can support different interpretations, neither is uniformly ``better'' than the other. Contrasting the two is often the most informative use, balancing computational expense against interpretability---all the more so because fully intrinsic formulations are often out of reach without approximations of intrinsic maps.

The limitation of the intrinsic AMG construction is that it remains \textit{geodesically local}. The exact identity of Lemma~\ref{lem:exact_eigvals} holds at any radius, but its reading---that a small $\lambda_i$ marks a genuinely inactive submanifold---is faithful only on a moderate geodesic ball about $p_0$. The active manifold-geodesics are therefore a mean-centered, geodesically local reduction over localized domain data rather than a global one. Capturing ridge geometry that turns across a compact domain calls for a genuinely foliated reduction, which lies in the scope of future work.

\textit{Outlook.} The constructions developed here extend beyond the hypersphere setting emphasized in this work. Since preshape spaces of landmarks and of length-normalized open curves are hyperspheres \cite{kendall1984,mio2007shape}, the present theory already governs response-driven dimension reduction for functions of shape along preshape geodesics. Forthcoming applications carry the same pipeline to the landmark-affine Grassmannian of separable shape tensors \cite{grey2023separable}, where gradient-faithful sensitivities (e.g., adjoint-based aerodynamic drag) and gradients learned from shape ensembles \cite{mukherjee2010learning,wu2010learning} instantiate the extrinsic--intrinsic comparison of Prop.~\ref{prop:local_agreement} at a measured support radius. A companion effort on numerically determining the injectivity radius of data-driven manifolds would make the support hypothesis of Def.~\ref{def:riemannian_view} auditable in practice.
\appendix
\section{Riemannian-geometry background}
\label{app:geo}
This appendix collects the standard differential-geometry definitions invoked in section~\ref{sec:intro-Riemannian-geo}; full developments are in \cite{Lee1997,Lee2003,Absil2008,doCarmo2017}.

\subsection{Tangent vectors and the differential}
The differential, viewed as a directional derivative, motivates the definition of a tangent vector.
\begin{definition}[Tangent Vector, \cite{Absil2008}, Ch. 3, Def. 3.5.1] \label{def:tan_vec}
	A \textit{tangent vector} $\vv_p \defeq \vv\vert_p$ to a smooth manifold $\mathcal{M}$ at a point $p$ is a map $\vv_p:C^{\infty}(\mathcal{M}) \rightarrow \mathbb{R}$ such that there exists a curve $\gamma:\mathcal{I} \subseteq \mathbb{R} \rightarrow \mathcal{M}$ with $\gamma(0) = p$ satisfying
	$$
	\vv_p(f) = \dot{\gamma}(0)f \defeq \frac{d}{dt}(f\circ \gamma)\biggl\vert_{t=0}
	$$
	for all $f \in C^{\infty}(\mathcal{M})$. Such a curve $\gamma$ is said to \textit{realize} the tangent vector $\vv_p$.
\end{definition}
By this definition $\vv_p$ is linear and satisfies the product rule, i.e.\ it is a \textit{derivation} \cite{Lee2003}; the $n$ derivations at a fixed $p$ form a basis for the tangent space $T_p\mathcal{M} \cong \mathbb{R}^n$ (\cite{Lee2003}, Prop. 3.15), visualized as a tangent plane \cite{Walker2015,Absil2008}. These tangent vectors are \textit{locally defined operators} that can change from one neighborhood to the next.
\begin{definition}[Differential, \cite{Lee2003}, Ch. 3]
	Let $F:\mathcal{M} \rightarrow \mathcal{N}$ be a smooth map between smooth manifolds and $\vv_p \in T_pM$. For all $p \in \mathcal{M}$ the \textit{differential of $F$ at $p$} is the map $dF_p:T_p\mathcal{M} \rightarrow T_{F(p)}\mathcal{N}$ for which $dF_p[\vv]$ is a derivation at $F(p)$ acting on $f \in C^{\infty}(\mathcal{N})$ by $dF_p[\vv](f) = \vv_p(f \circ F)$.
\end{definition}
In a chart $(U,\psi)$ with components $v^i = \vv_p(x^i) = dx^i$, this yields the directional derivative
$$
df_p:T_p\mathcal{M} \rightarrow \mathbb{R}:\vv_p \mapsto df_p[\vv] \defeq \sum_{i=1}^n\frac{\partial f}{\partial x^i}(p)\,dx^i,
$$
the coordinate form of the differential of a scalar-valued function; it generalizes the Jacobian in a coordinate-independent way, but any computation in components (such as \eqref{eq:eigvals}) implies a choice of local coordinates \cite{Walker2015}.

\subsection{Riemannian metric}
Every smooth manifold admits a Riemannian metric $g$, making it a Riemannian manifold $(\mathcal{M},g)$ \cite{Lee1997,Absil2008}; $g$ endows each tangent space with a smooth inner product, written interchangeably \cite{Absil2008,Lee1997,Lee2003,Walker2015,doCarmo2017}
$$
g(\vv_p,\vu_p) = \left\langle \vv,\vu \right\rangle_p, \qquad \vv,\vu \in T_p\mathcal{M}.
$$
Computationally the metric is a symmetric positive-definite matrix $G_x$ at each coordinate $x$, reducing to $I_n$ in the Euclidean case (section~\ref{sec:AS_intro}).

\subsection{Linear connection}
\begin{definition}[Linear Connection]
	Let $\mathcal{T}(\mathcal{M})$ denote the smooth vector fields on $M$. An \textit{affine connection} is a map $\nabla:\mathcal{T}(\mathcal{M}) \times \mathcal{T}(\mathcal{M}) \rightarrow \mathcal{T}(\mathcal{M}):(V,W) \mapsto \nabla_V W$ satisfying
	\begin{enumerate}
		\item $C^{\infty}(\mathcal{M})$-linearity in $V$: $\nabla_{fV_1 + gV_2}W = f\nabla_{V_1}W + g\nabla_{V_2}W$;
		\item $\mathbb{R}$-linearity in $W$: $\nabla_V(aW_1 + bW_2) = a\nabla_VW_1 + b\nabla_VW_2$;
		\item product rule in $W$: $\nabla_V(fW) = V(f)W + f\nabla_VW$.
	\end{enumerate}
\end{definition}

\subsection{Geodesic rescaling of submanifolds}
For each $V \in T\mathcal{M}$ the geodesic is
\begin{equation} \label{eq:geo_param}
\gamma_V(t) = \text{exp}(tV),
\end{equation}
defined while $\gamma(t)\in\mathcal{X}\subset\mathcal{M}$. As active-subspace computations rescale the domain for nondimensionalization \cite{Constantine2015}, we may rescale the $1$-dimensional geodesic submanifolds: for $\vv_i \in T_p\mathcal{M}$ and compact $\mathcal{I}_i \subset \mathbb{R}$,
\begin{equation} \label{eq:submanifold}
\mathcal{S}_{1,i} \defeq \lbrace \gamma(t) \in \mathcal{M} \, :\, \gamma(t) = \text{exp}_{p}(t\vv_i),\,\, t \in \mathcal{I}_i\rbrace,
\end{equation}
and, by rescaling (\cite{Lee1997}, Lemma 5.8), the normalized
\begin{equation} \label{eq:std_submanifold}
\mathcal{S}_{1,i} \defeq \lbrace \gamma(t) \in \mathcal{M} \, :\, \gamma(t) = \text{exp}_{p}(t\vv_i),\,\, t \in [-1,1]\rbrace,
\end{equation}
the two related by an affine transformation chosen for a given numerical study.


\section{Curve complexity and reachability}
\label{app:reach}
The remarks of section~\ref{sec:AMG_field} rest on two standard facts, recorded here informally.

\textit{Space-filling curves are long.} Realize $\mathcal{X}$ through an isometric embedding, so distance is the ambient $\Vert\cdot\Vert_2$, and let $\mathcal{X}$ have intrinsic dimension $n\geq2$ and $n$-dimensional volume $V$. Sample a length-$L$ curve $\gamma$ at the $\lfloor L/\tau\rfloor+1$ arc-length nodes $\gamma_k\defeq\gamma(k\tau)$. Because arc length dominates Euclidean distance ($\Vert\gamma_k-\gamma(t)\Vert_2\leq\vert k\tau-t\vert$ for unit-speed $\gamma$), a triangle inequality places every point of the $\tau$-tube $T_{\tau}\defeq\lbrace x:\Vert x-\gamma\Vert_2\leq\tau\rbrace$ within $2\tau$ of a node: if $\Vert x-\gamma(t')\Vert_2\leq\tau$ and $\gamma(t')$ lies within arc length $\tau$ of $\gamma_k$, then $\Vert x-\gamma_k\Vert_2\leq\Vert x-\gamma(t')\Vert_2+\Vert\gamma(t')-\gamma_k\Vert_2\leq2\tau$. Thus $\mathcal{X}\subseteq T_{\tau}$ is covered by the $\lfloor L/\tau\rfloor+1$ balls $B(\gamma_k,2\tau)$, and comparing $n$-dimensional volumes (a constant $c_n$ independent of $\tau$---absorbing the domain's bounded curvature and the ball-packing overlap---aside), $V\leq c_n\,(L/\tau+1)\,V_B(n)(2\tau)^n$ with $V_B(n)$ the unit-ball volume, so
\begin{equation}\label{eq:tube_length}
	L \;\geq\; \frac{V}{c_n\,V_B(n)\,2^n}\,\tau^{1-n} - \tau \;\xrightarrow[\;\tau\to0\;]{}\; \infty .
\end{equation}
The exponent is the \textit{intrinsic} dimension of $\mathcal{X}$, and it drives two distinct blow-ups. At fixed $n$ the length diverges as $\tau\to0$---the space-filling limit, in which even a curve confined to a curved embedded domain must grow without bound to remain $\tau$-dense. At a fixed scale $\tau$ below the domain's extent it instead grows \textit{exponentially in the dimension} $n$, $\tau^{1-n} = \tau/\tau^n$, since $\tau$-covering an $n$-dimensional domain already requires $\sim\tau^{-n}$ balls. This second modality is a curse of dimensionality bearing directly on the manifold-learning critique: when the latent (intrinsic) dimension is ambiguous and over-specified, the unconstrained single-curve reduction degenerates exponentially for the $\tau$-sized cover. Conversely, dense curves exist: a boustrophedon sweep of a $\tau$-grid passes within $\mathcal{O}(\tau)$ of every point of $\mathcal{X}$, and assigning each point the value of a continuous $f$ at its nearest curve point reproduces $f$ within its modulus of continuity $\omega_f(\tau)$. The tube radius therefore controls the achievable accuracy directly: any $\epsilon$ of Hypothesis~\ref{hypothesis} is met once $\omega_f(\tau)\leq\epsilon$, so the reduction error over unconstrained curves can be driven to zero for \textit{every} $f$ while the length \eqref{eq:tube_length} required to do so diverges---the ill-posedness of Remark~\ref{rem:dichotomy}.

\textit{Foliations and reachability.} Let $\mathcal{F}\subseteq T\mathcal{X}$ be a smooth constant-rank distribution. \textit{Frobenius} \cite{Lee2003}: if $\mathcal{F}$ is involutive---$[X,Y]$ a section of $\mathcal{F}$ whenever $X,Y$ are---then $\mathcal{X}$ is partitioned into immersed submanifolds (leaves) everywhere tangent to $\mathcal{F}$, and any $f$ with $df|_{\mathcal{F}}=0$ is constant on each connected leaf. \textit{Chow--Rashevskii} \cite{chow1939systeme,agrachev2020comprehensive}: if instead $\mathcal{F}$ is \textit{bracket-generating}---iterated Lie brackets of its sections span $T_x\mathcal{X}$ at every $x$ (H{\"o}rmander's condition)---then on a connected $\mathcal{X}$ any two points are joined by a path almost-everywhere tangent to $\mathcal{F}$, so any $f$ with $df|_{\mathcal{F}}=0$ is constant. Applied to section~\ref{sec:AMG_field}: the inactive fields and their iterated brackets annihilate $df$ and span the involutive closure $\overline{\mathcal{F}}$; when $\overline{\mathcal{F}}$ has constant rank it is either a proper subbundle---Frobenius yields a foliation and a manifold ridge---or all of $T\mathcal{X}$, whereupon Chow--Rashevskii forces $f$ constant.

\section{Hypersphere identities}
\label{app:sphere}
This appendix collects, with proofs, the elementary identities for the unit hypersphere $\iota(S^n) = \lbrace \widehat{x} \in \mathbb{R}^{n+1} \,:\, \Vert\widehat{x}\Vert_2 = 1\rbrace$, with metric induced by the ambient Euclidean inner product, used by Thm.~\ref{thm:normal_eigvals}, Lemma~\ref{lem:sphere_proj_trans}, Prop.~\ref{prop:local_agreement}, and section~\ref{sec:eg_2sphere}.

\subsection{Tangent spaces, projections, and the tangential connection}
Differentiating $\Vert c(t)\Vert_2^2 = 1$ along any curve $c$ on the sphere gives $\langle\dot{c}(t), c(t)\rangle = 0$, so the tangent space is the orthogonal hyperplane
\begin{equation}\label{eq:app_tan}
T_{\widehat{x}}\iota(S^n) = \lbrace \widehat{\vv} \in \mathbb{R}^{n+1} \,:\, \widehat{x}^{\top}\widehat{\vv} = 0 \rbrace, \quad \pi_{\widehat{x}} = I_{n+1} - \widehat{x}\widehat{x}^{\top},
\end{equation}
with $\pi_{\widehat{x}}$ the (symmetric, idempotent) orthogonal projection onto it. At a central point $\widehat{x}_0$ with orthonormal tangent basis $E_0 \in \mathbb{R}^{(n+1)\times n}$ the two representations agree: $\pi_0 \defeq E_0E_0^{\top} = I_{n+1} - \widehat{x}_0\widehat{x}_0^{\top}$, since both are the orthogonal projections onto the same hyperplane. The covariant derivative of the induced (tangential) connection is the tangential projection of the ambient derivative: for a vector field $V(t)$ along $c(t)$, $\tfrac{D}{dt}V = \pi_{c(t)}\dot{V}$ (\cite{Lee1997}; \cite{Absil2008}, Ch.~5). In particular, $V$ is \textit{parallel} along $c$ if and only if $\dot{V}(t)$ is normal to the sphere at $c(t)$ for every $t$.

\subsection{$S^n$ geodesics, exponential, and logarithm}
For unit $\widehat{\vv} \in T_{\widehat{x}}\iota(S^n)$ define
\begin{equation}\label{eq:app_geo}
\gamma(t) \defeq \cos(t)\,\widehat{x} + \sin(t)\,\widehat{\vv}, \qquad
\dot\gamma(t) = -\sin(t)\,\widehat{x} + \cos(t)\,\widehat{\vv}.
\end{equation}
Then: (i) $\Vert\gamma(t)\Vert_2 = 1$, since $\widehat{x}^{\top}\widehat{\vv} = 0$ kills the cross term; (ii) $\ddot\gamma(t) = -\gamma(t)$ is normal to the sphere at $\gamma(t)$, so $\tfrac{D}{dt}\dot\gamma = \pi_{\gamma(t)}\ddot\gamma = \boldsymbol{0}$ and $\gamma$ is a geodesic of the tangential connection; (iii) $\gamma(0) = \widehat{x}$, $\dot\gamma(0) = \widehat{\vv}$, and $\Vert\dot\gamma(t)\Vert_2 = 1$, so $\gamma$ is unit-speed and $d(\widehat{x},\gamma(t)) = t$ for $t \in [0,\pi)$. Hence $\text{exp}_{\widehat{x}}(t\widehat{\vv}) = \gamma(t)$, which is \eqref{eq:sphere_exp} in any dimension. Conversely, for non-antipodal $\widehat{x},\widehat{y}$, setting $d \defeq \cos^{-1}(\widehat{x}^{\top}\widehat{y})$ and $\widehat{\ve} \defeq \pi_{\widehat{x}}(\widehat{y})/\Vert\pi_{\widehat{x}}(\widehat{y})\Vert_2$ gives $\widehat{y} = \cos(d)\,\widehat{x} + \sin(d)\,\widehat{\ve}$, whence $\text{exp}^{-1}_{\widehat{x}}(\widehat{y}) = d\,\widehat{\ve}$, which is \eqref{eq:sphere_log}.

\subsection{Parallel transport along a radial geodesic}
Let $\gamma(t) = \cos(t)\,\widehat{x}_0 + \sin(t)\,\widehat{\ve}$ be the unit-speed geodesic from $\widehat{x}_0$ with initial direction $\widehat{\ve}$, and $\widehat{x} = \gamma(d)$. Two families of parallel fields along $\gamma$ suffice to transport any tangent vector:
\begin{enumerate}
	\item \textit{the velocity field} $\dot\gamma(t)$, parallel because $\gamma$ is a geodesic ($\tfrac{D}{dt}\dot\gamma = \boldsymbol{0}$);
	\item \textit{constant transverse fields}: for any fixed $\widehat{\vw} \in \mathbb{R}^{n+1}$ with $\widehat{\vw} \perp \widehat{x}_0$ and $\widehat{\vw} \perp \widehat{\ve}$, consider the vector field \textit{along $\gamma$} defined by $W(t) \defeq \widehat{\vw}$ for all $t$---constant as an element of the ambient $\mathbb{R}^{n+1}$. It satisfies $W(t)^{\top}\gamma(t) = \widehat{\vw}^{\top}\gamma(t) = 0$ for all $t$ (tangency along the whole geodesic), and its ambient derivative is $\dot{W}(t) = \boldsymbol{0}$ since the assignment does not vary with $t$; hence $W$ is parallel by the criterion of \eqref{eq:app_tan}.
\end{enumerate}
Since parallel transport is the unique linear isometry propagating parallel fields along a curve (\cite{Lee1997}, Thm.~4.11), transport from $\widehat{x} = \gamma(d)$ back to $\widehat{x}_0 = \gamma(0)$ acts as
\begin{equation}\label{eq:app_pt}
\begin{aligned}
\mathcal{P}_{\widehat{x}}\left[\dot\gamma(d)\right] &= \dot\gamma(0) = \widehat{\ve}, \\
\mathcal{P}_{\widehat{x}}\left[\widehat{\vw}\right] &= \widehat{\vw} \quad \text{for } \widehat{\vw} \perp \text{span}\{\widehat{x}_0,\widehat{\ve}\}.
\end{aligned}
\end{equation}
This is the $\mathrm{Gr}(1,n+1)$ specialization of the closed-form transport of \cite{edelman1998geometry}.

\subsection{Radial--transverse decomposition}
Retaining the notation of the preceding subsection: $\gamma$ is the unit-speed geodesic from $\widehat{x}_0$ with initial direction $\widehat{\ve} = \dot\gamma(0) \in T_{\widehat{x}_0}\iota(S^n)$, and $\widehat{x} = \gamma(d)$. With $\vu \defeq \dot\gamma(d) \in T_{\widehat{x}}\iota(S^n)$ the unit radial direction in the tangent space at $\widehat{x}$,
\begin{equation}\label{eq:app_split}
\begin{aligned}
T_{\widehat{x}}\iota(S^n) &= \text{span}\{\vu\} \oplus \mathcal{W}_{\perp}, \\
\mathcal{W}_{\perp} &\defeq \text{span}\{\widehat{x}_0,\widehat{\ve}\}^{\perp}.
\end{aligned}
\end{equation}
The decomposition lifts from the ambient space. Write $\mathcal{W} \defeq \text{span}\{\widehat{x}_0,\widehat{\ve}\}$ for the plane of the geodesic, so $\mathbb{R}^{n+1} = \mathcal{W} \oplus \mathcal{W}_{\perp}$ with $\mathcal{W}_{\perp}$ defined by the geodesic alone. Every $\widehat{\vw} \in \mathcal{W}_{\perp}$ is orthogonal to both $\widehat{x}_0$ and $\widehat{\ve}$, hence to $\widehat{x} = \cos(d)\,\widehat{x}_0 + \sin(d)\,\widehat{\ve}$, so all of $\mathcal{W}_{\perp}$ is tangent at $\widehat{x}$: $\mathcal{W}_{\perp} \subset T_{\widehat{x}}\iota(S^n)$. Within the plane, $\mathcal{W} \cap T_{\widehat{x}}\iota(S^n)$ is one-dimensional (the plane contains the normal direction $\widehat{x}$ itself) and $\vu \in \mathcal{W}$ satisfies $\widehat{x}^{\top}\vu = 0$, so that intersection is $\text{span}\{\vu\}$. The dimensions confirm the direct sum exhausts the tangent space: $\dim\text{span}\{\vu\}=1$ and $\dim\mathcal{W}_{\perp}=(n+1)-2=n-1$ (the orthogonal complement in $\mathbb{R}^{n+1}$ of the $2$-plane $\mathcal{W}$), summing to $n=\dim T_{\widehat{x}}\iota(S^n)$. Restricting the ambient splitting to the tangent hyperplane \eqref{eq:app_tan} implies every $\vv \in T_{\widehat{x}}\iota(S^n)$ decomposes uniquely as $\vv = \langle\vv,\vu\rangle\,\vu + \vv_{\perp}$ with $\vv_{\perp} \in \mathcal{W}_{\perp}$.

\subsection{Maclaurin bounds}
For all $t \in \mathbb{R}$,
\begin{equation}\label{eq:app_maclaurin}
0 \,\leq\, 1 - \cos t \,\leq\, \tfrac{t^2}{2}
\qquad\text{and}\qquad
\left\vert \tfrac{\sin t}{t} - 1 \right\vert \,\leq\, \tfrac{t^2}{6},
\end{equation}
the truncation errors of the Maclaurin (Taylor at zero) expansions of the cosine and sinc functions. Both follow by iterated integration of $\vert\sin s\vert \leq \vert s\vert$: first $\vert 1 - \cos t\vert = \vert\int_0^t \sin s\, ds\vert \leq \tfrac{t^2}{2}$ (nonnegativity of $1-\cos t$ is immediate), then $\vert\sin t - t\vert = \vert\int_0^t(\cos s - 1)\, ds\vert \leq \tfrac{\vert t\vert^3}{6}$.

\section{Proofs}
\label{app:proofs}
Proofs of the numbered results of the main text, in order of appearance.

\begin{proof}[Proof of Prop.~\ref{prop:eigs_and_constant_f}]
	The differential is the linear form $df_x[\boldsymbol{v}]=\boldsymbol{v}^{\top}\overline{\nabla}f(x)$,
	so \eqref{eq:C} gives $\mathbb{E}[df_x^2[\boldsymbol{v}]]=\boldsymbol{v}^{\top}C\boldsymbol{v}$ for every
	fixed $\boldsymbol{v}\in\mathbb{R}^n$. We prove (i)$\Rightarrow$(ii)$\Rightarrow$(iii)$\Rightarrow$(i).

	\textit{(i)$\Rightarrow$(ii).} Let $\boldsymbol{v}=\sum_{k=r+1}^n c_k\boldsymbol{w}_k\in\mathcal{S}$.
	The eigenvector relations $C\boldsymbol{w}_k=\lambda_k\boldsymbol{w}_k$, orthonormality
	$\boldsymbol{w}_k^{\top}\boldsymbol{w}_l=\delta_{kl}$, and $\lambda_k=0$ for $k>r$ give the integral
	identity
	\begin{equation*}
		\mathbb{E}[df_x^2[\boldsymbol{v}]]=\boldsymbol{v}^{\top}C\boldsymbol{v}
		=\sum_{k,l=r+1}^n c_k c_l\,\boldsymbol{w}_k^{\top}C\boldsymbol{w}_l
		=\sum_{k=r+1}^n c_k^2\lambda_k=0.
	\end{equation*}
	This becomes a pointwise statement assuming full support. The function
	$g\defeq(df_x[\boldsymbol{v}])^2$ is nonnegative and continuous, since $f\in C^1$, so its superlevel set 	$\lbrace g>0\rbrace$ is relatively open in $\mathcal{X}$. Were it nonempty, full support $\mathrm{supp}\,\rho=\mathcal{X}$---every nonempty relatively-open subset of $\mathcal{X}$ has positive $\rho$-measure---would force $\int_{\mathcal{X}}g\,\rho\,dx\geq\int_{\lbrace g>0\rbrace}
	g\,\rho\,dx>0$, contradicting $\mathbb{E}[df_x^2[\boldsymbol{v}]]=0$. Hence $\lbrace g>0\rbrace=\emptyset$; that is,
	$df_x[\boldsymbol{v}]=0$ for all $x\in\mathcal{X}$.

	\textit{(ii)$\Rightarrow$(iii).} For $x\in\mathcal{X}$ and $\boldsymbol{v}\in\mathcal{S}$ with
	$\lbrace x+t\boldsymbol{v}:t\in[0,1]\rbrace\subseteq\mathcal{X}$, the fundamental theorem of
	calculus and (ii) give
	\begin{equation*}
		f(x+\boldsymbol{v})-f(x)=\int_0^1 df_{x+t\boldsymbol{v}}[\boldsymbol{v}]\,dt=0 .
	\end{equation*}
	No convexity of $\mathcal{X}$ is used, and no claim is made along directions that leave
	$\mathcal{X}$ (e.g.\ an outward-normal $\boldsymbol{v}$ at a boundary point), for which no such
	segment exists.

	\textit{(iii)$\Rightarrow$(i).} Fix $i>r$. For $x\in\mathrm{int}\,\mathcal{X}$ choose $\delta>0$ with
	$B(x,\delta)\subseteq\mathcal{X}$; then for $\vert t\vert<\delta/\Vert\boldsymbol{w}_i\Vert_2$ the
	segment $\lbrace x+s\boldsymbol{w}_i:s\in[0,t]\rbrace\subseteq\mathcal{X}$, so (iii) gives $f(x+t\boldsymbol{w}_i)=f(x)$ on a neighborhood of $t=0$; every difference
	quotient therefore vanishes,
	\begin{equation*}
		df_x[\boldsymbol{w}_i]=\lim_{t\to 0}\frac{f(x+t\boldsymbol{w}_i)-f(x)}{t}
		=\lim_{t\to 0}\frac{0}{t}=0 .
	\end{equation*}
	Thus $df_x[\boldsymbol{w}_i]=0$ on $\mathrm{int}\,\mathcal{X}$. The map
	$x\mapsto df_x[\boldsymbol{w}_i]=\boldsymbol{w}_i^{\top}\overline{\nabla}f(x)$ is continuous on
	$\mathcal{X}$ ($f\in C^1$) and vanishes on the dense subset $\mathrm{int}\,\mathcal{X}$, since
	$\mathcal{X}=\overline{\mathrm{int}\,\mathcal{X}}$; hence $df_x[\boldsymbol{w}_i]\equiv 0$ on
	$\mathcal{X}$. Therefore $\lambda_i=\int_{\mathcal{X}}(df_x[\boldsymbol{w}_i])^2\rho\,dx=0$ by
	\eqref{eq:eigvals}, and since $i>r$ was arbitrary, (i) holds. Similar arguments appear in
	\cite{constantine2017near,constantine2014active}.
\end{proof}

\begin{proof}[Proof of Lemma~\ref{lem:exact_eigvals}]
	Work in normal coordinates, identifying $T_{p_0}\mathcal{M}\cong\mathbb{R}^n$ through the
	orthonormal frame for which $g_{p_0}=I_n$ (Prop.~\ref{prop:nml_props}, property~3). By the
	two-step flattening discussed after Def.~\ref{def:riemannian_view}---transport into
	$T_{p_0}\mathcal{M}$, then constant-$I_n$ extension over the coordinate space---$G_0$ of
	\eqref{eq:riemannian_view} is realized as the second-moment matrix
	$\int_{\mathcal{X}}\mathcal{P}_p[\nabla_{\mathcal{M}}f(p)]\,\mathcal{P}_p[\nabla_{\mathcal{M}}f(p)]^{\top}d\mu(p)$,
	with inner products on $T_{p_0}\mathcal{M}$ given by the canonical $\ell_2$ form.

	From the eigendecomposition $G_0=W_0\Lambda_0W_0^{\top}$ with orthonormal columns
	$\boldsymbol{w}_j$, the $i$th eigenvalue is the quadratic form
	$\lambda_i=\boldsymbol{w}_i^{\top}G_0\boldsymbol{w}_i$. Evaluating the rank-one integrand, for
	each $p$,
	$\boldsymbol{w}_i^{\top}\big(\mathcal{P}_p[\nabla_{\mathcal{M}}f(p)]\,\mathcal{P}_p[\nabla_{\mathcal{M}}f(p)]^{\top}\big)\boldsymbol{w}_i
	=\langle\mathcal{P}_p[\nabla_{\mathcal{M}}f(p)],\boldsymbol{w}_i\rangle_{p_0}^2$, so
	\begin{equation*}
		\lambda_i=\int_{\mathcal{X}}\langle\mathcal{P}_p[\nabla_{\mathcal{M}}f(p)],\boldsymbol{w}_i\rangle_{p_0}^2\,d\mu(p).
	\end{equation*}
	Parallel transport is a linear isometry,
	$\langle\mathcal{P}_p\vu,\mathcal{P}_p\vv\rangle_{p_0}=\langle\vu,\vv\rangle_p$ for all
	$\vu,\vv\in T_p\mathcal{M}$ (\cite{Lee1997}, Thm.~4.11). Taking
	$\vv=\mathcal{P}_p^{-1}[\boldsymbol{w}_i]=\boldsymbol{w}_i(p)$ and
	$\vu=\nabla_{\mathcal{M}}f(p)$, and using $\mathcal{P}_p\mathcal{P}_p^{-1}=\mathrm{id}$,
	\begin{align*}
		\langle\mathcal{P}_p[\nabla_{\mathcal{M}}f(p)],\boldsymbol{w}_i\rangle_{p_0}
		&=\langle\mathcal{P}_p[\nabla_{\mathcal{M}}f(p)],\mathcal{P}_p\mathcal{P}_p^{-1}[\boldsymbol{w}_i]\rangle_{p_0}\\
		&=\langle\mathcal{P}_p[\nabla_{\mathcal{M}}f(p)],\mathcal{P}_p[\boldsymbol{w}_i(p)]\rangle_{p_0}\\
		&=\langle\nabla_{\mathcal{M}}f(p),\boldsymbol{w}_i(p)\rangle_p.
	\end{align*}
	Finally, by the defining property of the Riemannian gradient (Def.~\ref{def:riemann_grad}),
	$\langle\nabla_{\mathcal{M}}f(p),\boldsymbol{w}_i(p)\rangle_p=df_p[\boldsymbol{w}_i(p)]$.
	Substituting,
	\begin{equation*}
		\lambda_i=\int_{\mathcal{X}}\big(df_p[\boldsymbol{w}_i(p)]\big)^2\,d\mu(p).
	\end{equation*}
	Non-negativity follows from positive semidefiniteness of $G_0$ (a $\mu$-average of rank-one
	positive semidefinite tensors); the ordering $\lambda_1\geq\dots\geq\lambda_n\geq0$ is the
	descending convention of Def.~\ref{def:AMG}.
\end{proof}

\begin{proof}[Proof of Thm.~\ref{thm:normal_eigvals}]
	Write $\widehat{\vg}(p) \defeq d\iota[\nabla_{\mathcal{M}} f(p)] \in T_{\widehat{x}}\iota(\mathcal{M}) \subset \mathbb{R}^m$,
	a $p \times 1$ column. Although $\widehat{\vg}(p)$ lies in a \textit{different} tangent subspace of
	$\mathbb{R}^m$ for each $p$, the outer product $\widehat{\vg}\,\widehat{\vg}^{\top}$ is formed in the
	\textit{fixed} ambient $\mathbb{R}^m$, so \eqref{eq:emb_opg} reads
	$C_{\iota} = \int_{\mathcal{X}} \widehat{\vg}\,\widehat{\vg}^{\top}\, d\mu(p) \in \mathbb{R}^{m\times m}$
	as an ordinary average of ambient $m\times m$ matrices---the extrinsic construction sidesteps the
	varying-tangent-space obstruction of section~\ref{sec:intro-Riemannian-geo} by never leaving $\mathbb{R}^m$.

	\textit{Step 1 (representation).} The matrix $E_0 \in \mathbb{R}^{m\times n}$ is constant over the
	integral, so it compresses each ambient outer product:
	\begin{align*}
	    E_0^{\top}C_{\iota}E_0 &= \int_{\mathcal{X}} E_0^{\top}\big(\widehat{\vg}\,\widehat{\vg}^{\top}\big)E_0\, d\mu(p)\\
	&= \int_{\mathcal{X}} \big(E_0^{\top}\widehat{\vg}\big)\big(E_0^{\top}\widehat{\vg}\big)^{\top} d\mu(p),
	\end{align*}
	which is the first identity.

	\textit{Step 2 (reshape the outer product).} For $\vw\in\mathbb{R}^n$, multiplying by $\vw$ on either
	side turns each rank-one integrand into a squared scalar---$\vw^{\top}E_0^{\top}\widehat{\vg}$ equals
	its own transpose---so
	\begin{align*}
		\vw^{\top}\!\left(E_0^{\top}C_{\iota}E_0\right)\!\vw &= \int_{\mathcal{X}} \big(\vw^{\top}E_0^{\top}\widehat{\vg}\big)^2 d\mu(p) \\
		&= \int_{\mathcal{X}} \big(\widehat{\vw}^{\top}\widehat{\vg}\big)^2 d\mu(p),
	\end{align*}
	with $\widehat{\vw} \defeq E_0\vw$.

	\textit{Step 3 (swap in the projection).} The matrix $\pi_0 = E_0E_0^{\top}$ is symmetric and, since
	$E_0^{\top}E_0 = I_n$, idempotent: $\pi_0^2 = E_0(E_0^{\top}E_0)E_0^{\top} = \pi_0$. It fixes central
	directions, $\pi_0\widehat{\vw} = E_0(E_0^{\top}E_0)\vw = E_0\vw = \widehat{\vw}$. Hence, by symmetry
	of $\pi_0$,
	$$
	\widehat{\vw}^{\top}\widehat{\vg} = \big(\pi_0\widehat{\vw}\big)^{\top}\widehat{\vg}
	= \widehat{\vw}^{\top}\big(\pi_0\,\widehat{\vg}\big),
	$$
	moving the projection off the (already central) test direction and onto the gradient. This gives the second identity.

	\textit{Step 4 (partial isometry).} The relation $E_0^{\top}E_0E_0^{\top} = E_0^{\top}$ makes $E_0^{\top}$
	a partial isometry with initial projection $\pi_0$ \cite{hearon1967partially}. Its effect on norms is
	the load-bearing consequence: for every $\widehat{\vv} \in \mathbb{R}^m$,
	$$
	\Vert E_0^{\top}\widehat{\vv}\Vert_2^2 = \widehat{\vv}^{\top}E_0E_0^{\top}\widehat{\vv}
	= \widehat{\vv}^{\top}\pi_0\widehat{\vv} = \widehat{\vv}^{\top}\pi_0^2\widehat{\vv}
	= \Vert\pi_0\widehat{\vv}\Vert_2^2,
	$$
	using $\pi_0 = \pi_0^{\top} = \pi_0^2$. The compression $E_0^{\top}$ thus preserves the
	central-tangent component of each gradient and annihilates its orthogonal complement: any ambient
	$\widehat{\vb}$ with $\pi_0\widehat{\vb} = \boldsymbol{0}$ contributes nothing to $E_0^{\top}C_{\iota}E_0$.
\end{proof}

\begin{proof}[Proof of Lemma~\ref{lem:sphere_proj_trans}]
	Let $\widehat{\ve} \in T_{\widehat{x}_0}\iota(S^n)$ be the unit initial direction of the geodesic $\gamma$ from $\widehat{x}_0$ to $\widehat{x}$, so that by \eqref{eq:app_geo}
	$$
	\widehat{x} = \gamma(d) = \cos(d)\,\widehat{x}_0 + \sin(d)\,\widehat{\ve}
    $$
    and
    $$
    \vu = \dot\gamma(d) = -\sin(d)\,\widehat{x}_0 + \cos(d)\,\widehat{\ve}.
	$$
	\textit{Step 1 (decompose the tangent space).} By the radial--transverse splitting \eqref{eq:app_split}, every $\vv \in T_{\widehat{x}}\iota(S^n)$ writes uniquely as $\vv = \langle\vv,\vu\rangle\,\vu + \vv_{\perp}$ with $\vv_{\perp} \perp \text{span}\{\widehat{x}_0,\widehat{\ve}\}$.
	\textit{Step 2 (transport each piece).} By the closed-form transport \eqref{eq:app_pt}, $\mathcal{P}_{\widehat{x}}[\vu] = \dot\gamma(0) = \widehat{\ve}$ and $\mathcal{P}_{\widehat{x}}[\vv_{\perp}] = \vv_{\perp}$; by linearity,
	$$
	\mathcal{P}_{\widehat{x}}[\vv] = \langle\vv,\vu\rangle\,\widehat{\ve} + \vv_{\perp}.
	$$
	\textit{Step 3 (project each piece).} With $\pi_0 = I_{n+1} - \widehat{x}_0\widehat{x}_0^{\top}$ \eqref{eq:app_tan}: the radial piece has $\widehat{x}_0^{\top}\vu = -\sin(d)$ and, by definition of $\vu$, $\vu + \sin(d)\widehat{x}_0 = \cos(d)\widehat{\ve}$, so
	\begin{align*}
	    \pi_0\,\vu &= \vu - (\widehat{x}_0^{\top}\vu)\,\widehat{x}_0 \\
        &= \vu + \sin(d)\,\widehat{x}_0 \\
        &= \cos(d)\,\widehat{\ve} \\
        &= \cos(d)\,\mathcal{P}_{\widehat{x}}[\vu].
	\end{align*}
	The transverse piece has $\widehat{x}_0^{\top}\vv_{\perp} = 0$, so $\pi_0\,\vv_{\perp} = \vv_{\perp} = \mathcal{P}_{\widehat{x}}[\vv_{\perp}]$---transverse components are projected \textit{and} transported identically.
	\textit{Step 4 (assemble).} Combining Steps 2--3, noting specifically that Step 2 implies $\mathcal{P}_{\widehat{x}}[\vv_{\perp}] = \vv_{\perp} = \mathcal{P}_{\widehat{x}}[\vv] - \langle \vv, \vu \rangle\widehat{\ve} = \mathcal{P}_{\widehat{x}}[\vv] - \langle \vv, \vu \rangle\mathcal{P}_{\widehat{x}}[\vu]$,
	\begin{align*}
	    \pi_0\,\vv &= \pi_0(\langle\vv,\vu\rangle\,\vu + \vv_{\perp}) \\
        &= \langle\vv,\vu\rangle\cos(d)\,\mathcal{P}_{\widehat{x}}[\vu] + \mathcal{P}_{\widehat{x}}[\vv_{\perp}] \\
        & = \langle\vv,\vu\rangle\cos(d)\,\mathcal{P}_{\widehat{x}}[\vu] + \mathcal{P}_{\widehat{x}}[\vv] -  \langle \vv, \vu \rangle \mathcal{P}_{\widehat{x}}[\vu]\\
        &= \mathcal{P}_{\widehat{x}}[\vv] - (1-\cos d)\,\langle\vv,\vu\rangle\,\mathcal{P}_{\widehat{x}}[\vu].
	\end{align*}
\end{proof}

\begin{proof}[Proof of Prop.~\ref{prop:local_agreement}]
	Identify $T_{p_0}\mathcal{M}$ with $T_{\widehat{x}_0}\iota(\mathcal{M}) \subset \mathbb{R}^m$ via $d\iota$, with coordinates in the orthonormal basis $E_0$, and write, for $p \in \mathcal{X}$ at geodesic distance $d \defeq d(p,p_0) \leq R$,
	$$
	\widehat{\vg}(p) \defeq d\iota\left[\nabla_{\mathcal{M}}f(p)\right] \in T_{\widehat{x}}\iota(\mathcal{M})
    $$
    and
    $$
	\tilde{\vg}(p) \defeq d\iota\left[\mathcal{P}_p[\nabla_{\mathcal{M}}f(p)]\right] \in T_{\widehat{x}_0}\iota(\mathcal{M}),
	$$
	the ambient representations of the gradient and of its parallel transport. The coordinates of the transported gradient in the frame $E_0$ are $E_0^{\top}\tilde{\vg}$, so the matrix of $G_0$ in that frame is $G_0 = \int_{\mathcal{X}}(E_0^{\top}\tilde{\vg})(E_0^{\top}\tilde{\vg})^{\top}d\mu(p)$, while Thm.~\ref{thm:normal_eigvals} gives $E_0^{\top}C_{\iota}E_0 = \int_{\mathcal{X}}(E_0^{\top}\widehat{\vg})(E_0^{\top}\widehat{\vg})^{\top}d\mu(p)$. Since $E_0^{\top}\pi_0 = E_0^{\top}$, we may replace $\widehat{\vg}$ by $\vq \defeq \pi_0\widehat{\vg} \in T_{\widehat{x}_0}\iota(\mathcal{M})$ without changing the integrand, and the difference of interest becomes
	\begin{equation} \label{eq:prop_delta}
	\Delta \defeq E_0^{\top}C_{\iota}E_0 - G_0 = \int_{\mathcal{X}} E_0^{\top}\left( \vq\,\vq^{\top} - \tilde{\vg}\,\tilde{\vg}^{\top} \right)E_0\, d\mu(p).
	\end{equation}
	\textit{Step 1 (pointwise comparison of projection and transport).} On the unit hypersphere, Lemma~\ref{lem:sphere_proj_trans} applied to $\vv = \widehat{\vg}(p)$ gives
	$$
	\vq = \tilde{\vg} + \vr, \qquad \vr = -(1-\cos d)\,\langle\widehat{\vg},\vu\rangle\,\mathcal{P}_{\widehat{x}}[\vu],
	$$
	with $\vu$ the radial direction at $\widehat{x} = \iota(p)$. The vector $\mathcal{P}_{\widehat{x}}[\vu]$ is unit, $\vert\langle\widehat{\vg},\vu\rangle\vert \leq \Vert\widehat{\vg}\Vert_2$ by Cauchy--Schwarz, and transport is an isometry, $\Vert\tilde{\vg}\Vert_2 = \Vert\widehat{\vg}\Vert_2 = \Vert\nabla_{\mathcal{M}}f\Vert_g$; hence, by the Maclaurin bound \eqref{eq:app_maclaurin},
	$$
	\Vert\vr\Vert_2 \leq (1-\cos d)\,\Vert\nabla_{\mathcal{M}}f\Vert_g \leq \tfrac{d^2}{2}\,\Vert\nabla_{\mathcal{M}}f\Vert_g.
	$$
	\textit{Step 2 (difference of rank-one products).} For any vectors related by $\vq = \tilde{\vg} + \vr$,
	$$
	\vq\,\vq^{\top} - \tilde{\vg}\,\tilde{\vg}^{\top} = \tilde{\vg}\,\vr^{\top} + \vr\,\tilde{\vg}^{\top} + \vr\,\vr^{\top}.
	$$
	For any unit $\xi$, $\vert\xi^{\top}(\tilde{\vg}\vr^{\top} + \vr\tilde{\vg}^{\top})\xi\vert = 2\vert(\xi^{\top}\tilde{\vg})(\vr^{\top}\xi)\vert \leq 2\Vert\tilde{\vg}\Vert_2\Vert\vr\Vert_2$, and $\Vert\vr\vr^{\top}\Vert_2 = \Vert\vr\Vert_2^2$. With Step 1,
	\begin{align*}
	    \left\Vert \vq\,\vq^{\top} - \tilde{\vg}\,\tilde{\vg}^{\top} \right\Vert_2 &\leq 2\Vert\tilde{\vg}\Vert_2\Vert\vr\Vert_2 + \Vert\vr\Vert_2^2 \\
        & \leq \left(d^2 + \tfrac{d^4}{4}\right)\Vert\nabla_{\mathcal{M}}f\Vert_g^2 \\
        &\leq \left(1 + \tfrac{R^2}{4}\right) d^2\, \Vert\nabla_{\mathcal{M}}f\Vert_g^2.
	\end{align*}
	\textit{Step 3 (integrate).} The spectral norm of a matrix-valued integral is at most the integral of the spectral norms---for unit $\xi,\zeta$, $\xi^{\top}(\int A\, d\mu)\zeta = \int \xi^{\top}A\,\zeta\, d\mu \leq \int\Vert A\Vert_2\, d\mu$---and compression by orthonormal columns does not increase it, $\Vert E_0^{\top}AE_0\Vert_2 \leq \Vert A\Vert_2$ (the quadratic form is evaluated over the subspace $\text{Range}(E_0)$). Applying both to \eqref{eq:prop_delta} and inserting Step 2 yields the displayed bound with $C = 1 + R^2/4$ on the unit hypersphere.
    
	\textit{Step 4 (general isometrically embedded manifolds).} The hypersphere enters only through Step 1; on a general $\iota(\mathcal{M})$ the same conclusion holds with $\Vert\vr\Vert_2 \leq C_0\, d^2\,\Vert\nabla_{\mathcal{M}}f\Vert_g$ for a constant $C_0$ determined by the embedding, as follows. Let $x = \varphi^{-1}(p)$, $\bar{x} \defeq x/\Vert x\Vert_2$, and consider the frames $F(t) \defeq d(\iota\circ\text{exp}_{p_0})_{t\bar{x}}:\mathbb{R}^n \rightarrow \mathbb{R}^m$ along the radial geodesic, a smooth family with $F(0) = E_0$. Taylor's theorem with integral remainder gives $F(t)[\vw] = E_0\vw + t\,\mathrm{II}(\bar{x},\vw) + \mathcal{O}(t^2)$, where the first-order coefficient is the second fundamental form by the Gauss formula (\cite{doCarmo2017}, Ch.~6; \cite{Lee1997}). 
    
    The coefficient is \emph{purely} $\mathrm{II}$---with no tangential contribution---precisely because we work in normal coordinates: there the Christoffel symbols vanish at the center, $\Gamma(0)=\boldsymbol{0}$, so the tangential part of the embedding Hessian $\partial^2(\iota\circ\text{exp}_{p_0})$ vanishes at $\widehat{x}_0$ and only the normal second fundamental form survives.
    
    Since $\mathrm{II}$ is \textit{normal-valued} at $\widehat{x}_0$, $\pi_0\,\mathrm{II}(\bar{x},\vw) = \boldsymbol{0}$ and
	$$
	\pi_0\, F(t)[\vw] = E_0\vw + \mathcal{O}(t^2).
	$$
	Meanwhile the normal-coordinate components $V(t)$ of a parallel field along the same geodesic solve $\dot{V}^k = -\Gamma^k_{ij}(t\bar{x})\,\dot{\gamma}^i V^j$, where the Christoffel symbols vanish at the center and grow linearly, $\Vert\Gamma(t\bar{x})\Vert = \mathcal{O}(t)$ (Prop.~\ref{prop:nml_props}; \cite{doCarmo2017}); integrating and applying Gr\"onwall's inequality gives $V(t) = V(0) + \mathcal{O}(t^2)$, i.e.\ the transported and coordinate frames agree to second order. Combining the two displays: for every unit $\vv \in T_p\mathcal{M}$, $\pi_0\,d\iota[\vv] = d\iota[\mathcal{P}_p\vv] + \mathcal{O}(d^2)$ uniformly over the support, with constants controlled by $\Vert\mathrm{II}\Vert$ and the curvature; Steps 2--3 then apply verbatim. Unlike the explicit hypersphere value $C=1+R^2/4$ of Step~3, this general $C$ is left qualitative---a fixed function of $\sup_{\mathcal{X}}\Vert\mathrm{II}\Vert$ and the sectional curvature over the support, not tracked to a closed form.
    
	\textit{Step 5 (spectral consequences).} For symmetric matrices, Weyl's inequality gives $\vert\lambda_i(E_0^{\top}C_{\iota}E_0) - \lambda_i(G_0)\vert \leq \Vert\Delta\Vert_2$ for each $i$ \cite{Golub1996}. For the eigenspaces: if $\eta \defeq \lambda_r(G_0) - \lambda_{r+1}(G_0) > \Vert\Delta\Vert_2$, then by Weyl the trailing spectrum of $E_0^{\top}C_{\iota}E_0$ is separated from the leading spectrum of $G_0$ by at least $\eta - \Vert\Delta\Vert_2$, and the Davis--Kahan $\sin\Theta$ theorem \cite{Golub1996} bounds the distance between the dominant $r$-dimensional eigenspaces by $\Vert\Delta\Vert_2/(\eta - \Vert\Delta\Vert_2)$---in particular $\mathcal{O}(R^2/\eta)$ as $R \rightarrow 0$.
\end{proof}

\section{Gallery of toy examples}
\label{app:gallery}
Figure~\ref{fig:amg_panel} collects the toy examples of section~\ref{sec:eg_2sphere} across the three complementary views used in the main text: the function in normal coordinates at the Karcher mean $\widehat{x}_0$ (color field $f\circ\text{exp}_{\widehat{x}_0}$, black arrows the parallel-transported gradient, rose/marigold the active/inactive axes); the function on the sphere (sparse tangential-gradient grid with the active/inactive manifold-geodesics and a ringed Monte-Carlo subset); and the shadow plot ($f$ along a sweep of geodesics with the samples projected onto the active coordinate). Reading down each column shows how low-dimensional structure---or its absence---manifests in each representation.
\begin{figure*}[p]
	\centering
	\includegraphics[width=\textwidth,height=0.95\textheight,keepaspectratio]{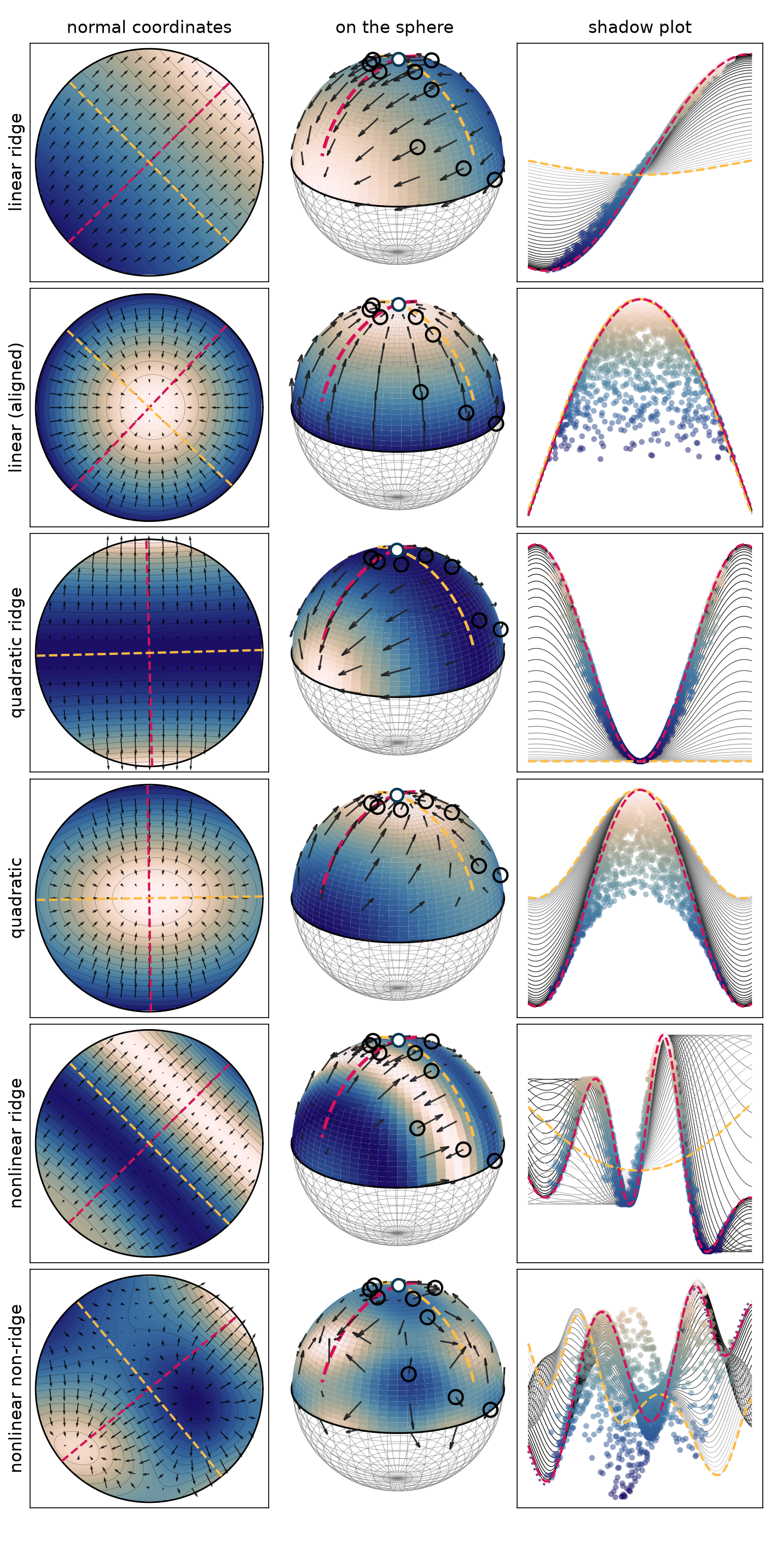}
	\caption{Gallery of toy examples over $\iota(S^2)$. \textbf{Rows}: linear ridge; degenerate (aligned) linear ridge; quadratic ridge; ordered quadratic; nonlinear ridge; nonlinear non-ridge. \textbf{Columns}: normal coordinates at $\widehat{x}_0$; the function on the sphere (gradient arrows scaled per panel to their largest, so magnitudes are relative); and the shadow plot (samples colored by $f$). The active manifold-geodesic is rose, the inactive marigold.}
	\label{fig:amg_panel}
\end{figure*}

\section*{Acknowledgments}
These opinions, recommendations, findings, and conclusions do not necessarily reflect the views or policies of NIST or the United States Government. Official contribution of the National Institute of Standards and Technology; not subject to copyright in the United States. This manuscript was edited with the assistance of various AI tools. AI was used to refine language, improve clarity, and enhance readability in accordance with the author’s instructions. All content, scientific claims, and conclusions have been reviewed and verified by the author to ensure accuracy and originality.

\bibliographystyle{plain}
\bibliography{workflow_v5}

\begin{thebibliography}{10}

\bibitem{absil2004riemannian}
P.-A. Absil, R.~Mahony, and R.~Sepulchre.
\newblock Riemannian geometry of grassmann manifolds with a view on algorithmic
  computation.
\newblock {\em Acta Applicandae Mathematica}, 80(2):199--220, 2004.

\bibitem{Absil2008}
P.-A. Absil, R.~Mahony, and R.~Sepulchre.
\newblock {\em Optimization Algorithms on Matrix Manifolds}.
\newblock Princeton University Press, 3 Market Place, Woodstock, Oxfordshire
  OX20 1SY, 2008.

\bibitem{agrachev2020comprehensive}
Andrei Agrachev, Davide Barilari, and Ugo Boscain.
\newblock {\em A Comprehensive Introduction to Sub-Riemannian Geometry}, volume
  181 of {\em Cambridge Studies in Advanced Mathematics}.
\newblock Cambridge University Press, Cambridge, 2020.

\bibitem{alain2014regularized}
Guillaume Alain and Yoshua Bengio.
\newblock What regularized auto-encoders learn from the data-generating
  distribution.
\newblock {\em The Journal of Machine Learning Research}, 15(1):3563--3593,
  2014.

\bibitem{SEQUOIA2017}
J.~J. Alonso, M.~S. Eldred, P.~Constantine, K.~Duraisamy, C.~Farhat,
  G.~Iaccarino, and G.~Jakeman.
\newblock Scalable environment for quantification of uncertainty and
  optimization in industrial applications {(SEQUOIA)}.
\newblock In {\em 19th AIAA Non-Deterministic Approaches Conference}, pages
  1--19, Grapevine, TX, 2017.

\bibitem{Alonso2017}
J.~J. Alonso, M.~S. Eldred, P.~G. Constantine, K.~Duraisamy, C.~Farhat,
  G.~Iaccarino, and J.~Jakeman.
\newblock Scalable environment for quantification of uncertainty and
  optimization in industrial applications {(SEQUOIA)}.
\newblock {\em 19th AIAA Non-Deterministic Approaches Conference}, 2017.

\bibitem{berger1981some}
E.~Berger, R.~Bryant, and P.~Griffiths.
\newblock Some isometric embedding and rigidity results for riemannian
  manifolds.
\newblock {\em Proceedings of the National Academy of Sciences},
  78(8):4657--4660, 1981.

\bibitem{bhattacharya2003large}
R.~Bhattacharya and V.~Patrangenaru.
\newblock Large sample theory of intrinsic and extrinsic sample means on
  manifolds. {I}.
\newblock {\em The Annals of Statistics}, 31:1--29, 2003.

\bibitem{bridges2019active}
R.~A. Bridges, A.~D. Gruber, C.~R. Felder, M.~Verma, and C.~Hoff.
\newblock Active manifolds: A non-linear analogue to active subspaces.
\newblock {\em arXiv preprint arXiv:1904.13386}, 2019.

\bibitem{Chatterjee2000}
A.~Chatterjee.
\newblock An introduction to the proper orthogonal decomposition.
\newblock {\em Current Science}, Vol. 78(No. 7), April 2000.

\bibitem{chen2023intrinsic}
B.~Chen, S.~Dai, and Z.~Yu.
\newblock Intrinsic minimum average variance estimation for sufficient
  dimension reduction with symmetric positive definite matrices and beyond.
\newblock {\em arXiv preprint arXiv:2302.13059}, 2023.

\bibitem{chow1939systeme}
Wei-Liang Chow.
\newblock {\"U}ber systeme von linearen partiellen differentialgleichungen
  erster ordnung.
\newblock {\em Mathematische Annalen}, 117:98--105, 1939.

\bibitem{Constantine2015}
P.~G. Constantine.
\newblock {\em Active Subspaces: Emergine Ideas in Dimension Reduction for
  Parameter Studies}.
\newblock Society for Industrial and Applied Mathematics, 3600 Market Street,
  6th Floor, Phildelphia, PA 19104-2688, 2015.

\bibitem{constantine2014active}
P.~G. Constantine, E.~Dow, and Q.~Wang.
\newblock Active subspace methods in theory and practice: Applications to
  kriging surfaces.
\newblock {\em SIAM Journal on Scientific Computing}, 36(4):A1500--A1524, 2014.

\bibitem{constantine2017near}
P.~G Constantine, A.~Eftekhari, J.~Hokanson, and R.~A. Ward.
\newblock A near-stationary subspace for ridge approximation.
\newblock {\em Computer Methods in Applied Mechanics and Engineering},
  326:402--421, 2017.

\bibitem{Constantine2015exploiting}
P.~G. Constantine, M.~Emory, J.~Larsson, and G.~Iaccarino.
\newblock Exploiting active subspaces to quantify uncertainty in the numerical
  simulation of the hyshot ii scramjet.
\newblock {\em Journal of Computational Physics}, 302:1--20, 2015.

\bibitem{constantine2014computing}
P.~G. Constantine and D.~Gleich.
\newblock Computing active subspaces with monte carlo.
\newblock {\em arXiv preprint arXiv:1408.0545}, 2014.

\bibitem{Constantine11c}
P.~G. Constantine, Q.~Wang, A.~Doostan, and G.~Iaccarino.
\newblock A surrogate accelerated bayesian inverse analysis of the {HyShot II}
  flight data.
\newblock {\em 52nd AIAA/ASME/ASCE/AHS/ASC Structures, Structural Dynamics and
  Materials Conference, Denver, Colorado}, 2011.
\newblock AIAA 2011-2037.

\bibitem{Cook1998}
R.~D. Cook.
\newblock {\em Regression Graphics: Ideas for Studying Regressions through
  Graphics}, volume Vol. 482.
\newblock John Wiley \& Sons. Inc., 605 Third Avenue, New York, 2009.

\bibitem{doCarmo2017}
M.~P. Do~Carmo.
\newblock {\em Differential Geometry of Curves and Surfaces: Revised and
  Updated Second Edition}.
\newblock Courier Dover Publications, 2016.

\bibitem{edelman1998geometry}
A.~Edelman, T.~A. Arias, and S.~T. Smith.
\newblock The geometry of algorithms with orthogonality constraints.
\newblock {\em SIAM journal on Matrix Analysis and Applications},
  20(2):303--353, 1998.

\bibitem{fletcher2004principal}
P.~T. Fletcher, C.~Lu, S.~M. Pizer, and S.~Joshi.
\newblock Principal geodesic analysis for the study of nonlinear statistics of
  shape.
\newblock {\em IEEE transactions on medical imaging}, 23(8):995--1005, 2004.

\bibitem{genovese2014nonparametric}
C.~R. Genovese, M.~Perone-Pacifico, I.~Verdinelli, and L.~Wasserman.
\newblock Nonparametric ridge estimation.
\newblock {\em The Annals of Statistics}, 42(4):1511--1545, 2014.

\bibitem{glaws2017inverse}
A.~Glaws, P.~Constantine, and R.~D. Cook.
\newblock Inverse regression for ridge recovery i: Theory.
\newblock {\em arXiv preprint arXiv}, 1702, 2017.

\bibitem{Glaws2018}
A.~Glaws and P.~G. Constantine.
\newblock Gauss--christoffel quadrature for inverse regression: applications to
  computer experiments.
\newblock {\em Statistics and Computing}, Jun 2018.

\bibitem{glawsMHD}
A.~Glaws, P.~G. Constantine, J.~N. Shadid, and T.~M. Wildey.
\newblock Dimension reduction in magnetohydrodynamics power generation models:
  Dimensional analysis and active subspaces.
\newblock {\em Statistical Analysis and Data Mining: The ASA Data Science
  Journal}, 10(5):312--325, 2017.

\bibitem{Golub1996}
G.H. Golub and C.F. Van~Loan.
\newblock {\em Matrix Computations}.
\newblock Johns Hopkins Univesity Press, 3rd ed. edition, 1996.

\bibitem{Grey2017}
Z.~J. Grey and P.~G Constantine.
\newblock Active subspaces of airfoil shape parameterizations.
\newblock {\em AIAA Journal}, 56(5):2003--2017, 2018.

\bibitem{grey2019}
Z.~J. Grey, P.~G. Constantine, and A.~White.
\newblock Enabling aero-engine thermal model calibration using active
  subspaces.
\newblock {\em AIAA Propulsion and Energy Forum}, 2019.

\bibitem{grey2023separable}
Z.~J. Grey, O.~A. Doronina, and A.~Glaws.
\newblock Separable shape tensors for aerodynamic design.
\newblock {\em Journal of Computational Design and Engineering},
  10(1):468--487, 2023.

\bibitem{grey2025explainable}
Z.~J. Grey, N.~Fisher, and A.~Glaws.
\newblock Explainable binary classification of separable shape ensembles.
\newblock {\em Journal of Mathematical Imaging and Vision}, 2025.
\newblock in revision (revised manuscript pending reviewer responses).

\bibitem{hang2009rigidity}
F.~Hang and X~Wang.
\newblock Rigidity theorems for compact manifolds with boundary and positive
  ricci curvature.
\newblock {\em Journal of Geometric Analysis}, 19(3):628--642, 2009.

\bibitem{hastie1989principal}
T.~Hastie and W.~Stuetzle.
\newblock Principal curves.
\newblock {\em Journal of the American Statistical Association},
  84(406):502--516, 1989.

\bibitem{hearon1967partially}
John~Z Hearon.
\newblock Partially isometric matrices.
\newblock {\em J. Res. Nat. Bur. Standards Sect. B}, 71:225--228, 1967.

\bibitem{hokanson2017}
J.~M Hokanson and P.~G Constantine.
\newblock Data-driven polynomial ridge approximation using variable projection.
\newblock {\em SIAM Journal on Scientific Computing}, 40(3):A1566--A1589, 2018.

\bibitem{hokanson2018data}
J.~M. Hokanson and P.~G Constantine.
\newblock Data-driven polynomial ridge approximation using variable projection.
\newblock {\em SIAM Journal on Scientific Computing}, 40(3):A1566--A1589, 2018.

\bibitem{hokanson2019lipschitz}
J.~M. Hokanson and P.~G Constantine.
\newblock The lipschitz matrix: A tool for parameter space dimension reduction.
\newblock {\em arXiv preprint arXiv:1906.00105}, 2019.

\bibitem{huckemann2010intrinsic}
S.~Huckemann, T.~Hotz, and A.~Munk.
\newblock Intrinsic shape analysis: Geodesic principal component analysis for
  {R}iemannian manifolds modulo isometric {L}ie group actions.
\newblock {\em Statistica Sinica}, 20(1):1--58, 2010.

\bibitem{Jolliffe2002}
I.~T. Jolliffe.
\newblock {\em Principal Component Analysis}.
\newblock Springer, 175 Fifth Avenue, New York., 2nd edition, 2002.

\bibitem{Joshi2007}
S.H. Joshi, E.~Klassen, A.~Srivastava, and I.~Jermyn.
\newblock A novel representation for riemannian analysis of elastic curves in
  $\mathbb{R}^n$.
\newblock {\em Proc. IEEE Computer Vision and Pattern Recognition {(CVPR)}},
  pages pp. 1--7, 2007.

\bibitem{kendall1984}
D.~G. Kendall.
\newblock Shape manifolds, procrustean metrics, and complex projective spaces.
\newblock {\em Bulletin of the London Mathematical Society}, 16(2):81--121,
  1984.

\bibitem{klassen2004analysis}
E.~Klassen, A.~Srivastava, M.~Mio, and S.~H. Joshi.
\newblock Analysis of planar shapes using geodesic paths on shape spaces.
\newblock {\em IEEE transactions on pattern analysis and machine intelligence},
  26(3):372--383, 2004.

\bibitem{Lee1997}
J.~Lee.
\newblock {\em Riemannian Manifolds: An Introduction to Curvature}.
\newblock Springer-Verlag, New York, 1997.

\bibitem{Lee2003}
J.~M. Lee.
\newblock {\em An Introduction to Smooth Manifolds, 2nd. ed.}
\newblock Springer, New York, 2003.

\bibitem{lee2018model}
K.~Lee and K.~Carlberg.
\newblock Model reduction of dynamical systems on nonlinear manifolds using
  deep convolutional autoencoders.
\newblock {\em arXiv preprint arXiv:1812.08373}, 2018.

\bibitem{lewis2016gradient}
A.~Lewis, R.~Smith, and B.~Williams.
\newblock Gradient free active subspace construction using morris screening
  elementary effects.
\newblock {\em Computers \& Mathematics with Applications}, 72(6):1603--1615,
  2016.

\bibitem{li2018sufficient}
B.~Li.
\newblock {\em Sufficient dimension reduction: Methods and applications with
  R}.
\newblock Chapman and Hall/CRC, 2018.

\bibitem{Li1992}
K-C. Li.
\newblock {On Principal {Hessian} Directions for Data Visualization and
  Dimension Reduction: Another Application of {Stein's} Lemma}.
\newblock {\em Journal of the American Statistical Association},
  87(420):1025--1039, 1992.

\bibitem{loudon2016}
T.~Loudon and S.~Pankavich.
\newblock Mathematical analysis and dynamic active subspaces for a long term
  model of {HIV}.
\newblock {\em arXiv preprint arXiv:1604.04588}, 2016.

\bibitem{Lukaczyk2014}
T.~Lukaczyk, F.~Palacios, Alonso~J. J., and P.~G. Constantine.
\newblock Active subspaces for shape optimization.
\newblock {\em 10th AIAA Multidisciplinary Design Optimization Conference},
  January 2014.

\bibitem{mallasto2018wrapped}
A.~Mallasto and A.~Feragen.
\newblock Probabilistic {R}iemannian submanifold learning with wrapped
  {G}aussian process latent variable models.
\newblock In {\em Proceedings of the 21st International Conference on
  Artificial Intelligence and Statistics (AISTATS)}, 2018.

\bibitem{mio2007shape}
W.~Mio, A.~Srivastava, and S.~H. Joshi.
\newblock On shape of plane elastic curves.
\newblock {\em International Journal of Computer Vision}, 73(3):307--324, 2007.

\bibitem{mukherjee2010learning}
S.~Mukherjee, Q.~Wu, D.-X. Zhou, et~al.
\newblock {Learning gradients on manifolds}.
\newblock {\em {Bernoulli}}, {16}({1}):{181--207}, {2010}.

\bibitem{nash1956imbedding}
J.~Nash.
\newblock The imbedding problem for riemannian manifolds.
\newblock {\em Annals of mathematics}, pages 20--63, 1956.

\bibitem{nilsson2007regression}
J.~Nilsson, F.~Sha, and M.~I. Jordan.
\newblock Regression on manifolds using kernel dimension reduction.
\newblock In {\em Proceedings of the 24th International Conference on Machine
  Learning (ICML)}, pages 697--704, 2007.

\bibitem{ozertem2011locally}
U.~Ozertem and D.~Erdogmus.
\newblock Locally defined principal curves and surfaces.
\newblock {\em Journal of Machine Learning Research}, 12:1249--1286, 2011.

\bibitem{panaretos2014principal}
V.~M. Panaretos, T.~Pham, and Z.~Yao.
\newblock Principal flows.
\newblock {\em Journal of the American Statistical Association},
  109(505):424--436, 2014.

\bibitem{pennec2006intrinsic}
X.~Pennec.
\newblock Intrinsic statistics on {R}iemannian manifolds: Basic tools for
  geometric measurements.
\newblock {\em Journal of Mathematical Imaging and Vision}, 25(1):127--154,
  2006.

\bibitem{pennec2018barycentric}
X.~Pennec.
\newblock Barycentric subspace analysis on manifolds.
\newblock {\em The Annals of Statistics}, 46(6A):2711--2746, 2018.

\bibitem{romor2023kernel}
F.~Romor, M.~Tezzele, A.~Lario, and G.~Rozza.
\newblock Kernel-based active subspaces with application to computational fluid
  dynamics parametric problems using the discontinuous {G}alerkin method.
\newblock {\em International Journal for Numerical Methods in Engineering},
  124(13):2947--2976, 2023.

\bibitem{Schulz2014}
V.~H. Schulz.
\newblock A riemannian view on shape optimization.
\newblock {\em The Journal of the Society for the Foundations of Computational
  Mathematics}, 2014.

\bibitem{serani2025survey}
A.~Serani and M.~Diez.
\newblock A survey on design-space dimensionality reduction methods for shape
  optimization.
\newblock {\em Archives of Computational Methods in Engineering}, 2025.

\bibitem{soto2026intrinsic}
C.~Soto, C.~Wang, Y.~Huang, and X.~Chen.
\newblock Intrinsic {R}iemannian cross-covariance for manifold-valued random
  objects.
\newblock {\em arXiv preprint arXiv:2606.10212}, 2026.

\bibitem{srivastava2002monte}
A.~Srivastava and E.~Klassen.
\newblock Monte carlo extrinsic estimators of manifold-valued parameters.
\newblock {\em IEEE Transactions on Signal Processing}, 50(2):299--308, 2002.

\bibitem{srivastava2016functional}
Anuj Srivastava and Eric~P. Klassen.
\newblock {\em Functional and Shape Data Analysis}.
\newblock Springer Series in Statistics. Springer, New York, 2016.

\bibitem{stinson2016}
K.~Stinson, D.~F. Gleich, and P.~G. Constantine.
\newblock A randomized algorithm for enumerating zonotope vertices.
\newblock {\em arXiv preprint arXiv:1602.06620}, 2016.

\bibitem{Walker2015}
S.W. Walker.
\newblock {\em The Shapes of Things: A Practical Guide to Differential Geometry
  and the Shape Derivative}.
\newblock SIAM, Philadelphia, PA, 2015.

\bibitem{white2019}
Andrew White, Sankaran Mahadevan, Zachary Grey, Jason Schmucker, and Alexander
  Karl.
\newblock Efficient calibration of a turbine disc heat transfer model under
  uncertainty.
\newblock {\em Journal of Thermophysics and Heat Transfer}, pages 1--11, 2020.

\bibitem{wu2010learning}
Q.~Wu, J.~Guinney, M.~Maggioni, and S.~Mukherjee.
\newblock Learning gradients: predictive models that infer geometry and
  statistical dependence.
\newblock {\em Journal of Machine Learning Research}, 11(Aug):2175--2198, 2010.

\bibitem{ying2020frechet}
C.~Ying and Z.~Yu.
\newblock {F}r\'echet sufficient dimension reduction for random objects.
\newblock {\em arXiv preprint arXiv:2007.00292}, 2020.

\bibitem{zahm2018}
O.~Zahm, P.~G. Constantine, Cl{\'e}mentine Prieur, and Y.~Marzouk.
\newblock Gradient-based dimension reduction of multivariate vector-valued
  functions.
\newblock {\em arXiv preprint arXiv:1801.07922}, 2018.

\bibitem{zanoni2025neural}
A.~Zanoni, G.~Geraci, M.~Salvador, A.~L. Marsden, and D.~E. Schiavazzi.
\newblock Neural active manifolds: nonlinear dimensionality reduction for
  uncertainty quantification.
\newblock {\em Journal of Scientific Computing}, 105:79, 2025.

\end{thebibliography}

\end{document}